\theoremstyle{plain}
\newtheorem{theorem}{Theorem}
\newtheorem{proposition}[theorem]{Proposition}
\newtheorem{lemma}[theorem]{Lemma}
\theoremstyle{definition}
\newtheorem{remark}[theorem]{Remark}
\let\d\undefined
\let\H\undefined
\newcommand*{\N}{\mathbb{N}}
\newcommand*{\R}{\mathbb{R}}
\newcommand*{\d}{\mathrm{d}}
\newcommand{\der}{\mathrm{d}}
\newcommand*{\norm}[1]{\left\lVert#1\right\rVert}
\newcommand{\ip}[2]{\left\langle#1,#2\right\rangle}
\newcommand{\iip}[2]{\left(#1,#2\right)}
\newcommand{\H}{\mathcal{H}}
\newcommand{\V}{\mathcal{V}}
\newcommand{\eps}{\varepsilon}
\newcommand{\doo}[1]{\partial_{\mathrm{#1}}}
\newcommand{\lap}[1]{\overset{\mathtt{#1}}{\Delta}}
\newcommand{\grad}[1]{\overset{\mathtt{#1}}{\nabla}}
\newcommand{\dive}[1]{\overset{\mathtt{#1}}{\operatorname{div}}}
\newcommand{\abs}[1]{\left\vert#1\right\vert}
\newcommand{\alf}[1]{\accentset{\alpha}{#1}}
\newcommand{\C}[3]{C^{#1}_{\mathtt{h}}C^{#2}_{\mathtt{v}}(#3)}
\newcommand{\HS}[3]{H^{#1}_{\mathtt{h}}H^{#2}_{\mathtt{v}}(#3)}
\newcommand{\OS}[3]{\Omega^{#1}_{\mathtt{h}}\Omega^{#2}_{\mathtt{v}}(#3)}
\newcommand{\LS}[3]{\Lambda^{#1}_{\mathtt{h}}\Lambda^{#2}_{\mathtt{v}}(#3)}
\DeclareMathOperator{\supp}{supp}
\DeclareMathOperator{\Lip}{Lip}
\DeclareMathOperator{\sisus}{int}
\DeclareMathOperator{\trace}{tr}
\newcommand{\NTR}[1]{}
\title[Tensor tomography on manifolds of low regularity]{Tensor tomography on negatively curved manifolds of low regularity}
\author{Joonas Ilmavirta}
\address{Department of Mathematics and Statistics\\
University of Jyv\"askyl\"a\\
P.O. Box 35 (MaD)\\
FI-40014 University of Jyv\"askyl\"a, Finland\\
\texttt{joonas.ilmavirta@jyu.fi}}
\author{Antti Kykkänen}
\address{Department of Mathematics and Statistics\\
University of Jyv\"askyl\"a\\
P.O. Box 35 (MaD)\\
FI-40014 University of Jyv\"askyl\"a, Finland\\
\texttt{antti.k.kykkanen@jyu.fi}}
\date{\today}
\keywords{Geodesic X-ray tomography, non-smooth geometry, tensor tomography, integral geometry, inverse problems.}
\subjclass[2010]{44A12, 53C22, 53C65, 58C99}
\begin{document}

\maketitle

\begin{abstract}
We prove solenoidal injectivity for the geodesic X-ray transform of tensor fields on simple Riemannian manifolds with $C^{1,1}$ metrics and non-positive sectional curvature.
The proof of the result rests on Pestov energy estimates for a transport equation on the non-smooth unit sphere bundle of the manifold.

Our low regularity setting requires keeping track of regularity and making use of many functions on the sphere bundle having more vertical than horizontal regularity.
Some of the methods, such as boundary determination up to gauge and regularity estimates for the integral function, have to be changed substantially from the smooth proof.
The natural differential operators such as covariant derivatives are not smooth.
\end{abstract}


\section{Introduction}

\NTR{We have indicated all changes to the manuscript with these footnotes.}

What are the minimal smoothness assumptions on a Riemannian metric under which the geodesic X-ray transform of tensor fields on the Riemannian manifold is solenoidally injective?
Solenoidal injectivity on smooth simple manifolds with negative curvature was proved in~\cite{PSIGOTFMNC}.
Since~\cite{PSIGOTFMNC}, many solenoidal injectivity results have been shown under different variations of the geometric setup.
Solenoidal injectivity is known for all real analytic simple Riemannian metrics~\cite{SUBRASFGSM} and for all smooth simple Riemannian metrics with certain bounds on their terminator values~\cite{PSUIDBTTT}.
The study of the X-ray transform on manifolds with Riemannian metrics of low regularity was started recently~\cite{IKPIXRTMLR}, where the authors prove that the X-ray transform of scalar functions is injective on all simple manifolds with~$C^{1,1}$ Riemannian metrics.
We extend this result and prove that the X-ray transform of tensor fields of any order is solenoidally injective for all simple~$C^{1,1}$ Riemannian metrics with almost everywhere non-positive sectional curvature.

X-ray tomography problems of $2$-tensor fields naturally arise as linearized problems of travel time tomography or boundary rigidity~\cite{SUVZTTT}.
The travel time problem arises in applications, such as seismological imaging, where one asks whether the sound speed in a medium can uniquely be determined from the knowledge of the arrival times of waves on the boundary.
Because of the geophysical nature of such problems, it is relevant to ask how well the studied model corresponds to the real world.
From this point of view, the smoothness assumption of the model manifold is merely a mathematical convenience, which is why we have set out to relax such assumptions.

Our main objective is to optimize the regularity assumptions imposed on the Riemannian metric~$g$ of the manifold.
We focus on global and uniform non-smoothness (as opposed to, say, interfaces with jump discontinuities), and as in~\cite{IKPIXRTMLR} the natural optimality to aim at remains~$C^{1,1}$.
If~$g$ is only assumed to be in the Hölder space~$C^{1,\alpha}$ for~$\alpha < 1$, the geodesic equation fails to have unique solutions~\cite{HartmanLUG,SSGLR} and the X-ray transform itself becomes ill-defined.
In this sense our result is optimal on the Hölder scale, as we provide a solenoidal injectivity result (theorem~\ref{thm:c11-s-injectivity}) for the class of simple~$C^{1,1}$ Riemannian metrics with almost everywhere non-positive sectional curvature.

The non-positivity assumption on the curvature is likely unnecessary --- milder assumptions on top of simplicity could suffice.
However, even in the smooth case relaxing the curvature assumption causes technical difficulties and solenoidal injectivity for all simple Riemannian metrics is not understood.
Since our setting is complicated enough as it is, we decided not to include manifolds with possible positive curvature.

A popular method for proving injectivity results relies on interplay between the X-ray transform and a transport equation.
In the smooth case, the transport equation is studied using the so called Pestov identity and energy estimates derived from it (see e.g.~\cite{PSUGIPETD,PSUTTPC,IMIGMBA} and references therein).

We employ a similar approach in our non-smooth setting.
Our proof is structurally the same as those in smooth geometry, so the main content of this article is to ensure that everything is well defined and behaved in our non-smooth setting:
the unit sphere bundle and operators on it, commutator formulas, function spaces, Santal\'o's formula, and others.

\subsection{Main results}

We record as our main result the following kernel description for the geodesic X-ray transform of tensor fields.
In the literature of the geodesic X-ray transform similar results are often called solenoidal injectivity results.
Throughout the article~$M$ will be a compact and connected smooth manifold with a smooth boundary~$\partial M $.
The dimension of~$M$ will always be~$n \ge 2$.
The manifold~$M$ comes equipped with a~$C^{1,1}$ regular Riemannian metric~$g$.
That is, the metric~$g$ is continuously differentiable and the derivative is Lipschitz.

We define what it means for~$(M,g)$ to be simple in section~\ref{sec:definitions-notation}.
Simple~$C^{1,1}$ manifolds have global coordinates by definition, but for smooth simple manifolds this is a consequence of the definitions.
When $g \in C^\infty$ the definition of $C^{1,1}$ simplicity is equivalent to the classical definition~\cite[Theorem 2]{IKPIXRTMLR} and thus assuming existence of global coordinates is not superfluous.
We say that $g$ has almost everywhere non-positive sectional curvature if for almost all $x \in M$ we have $\ip{R(w,v)v}{w}_{g(x)} \le 0$ where $v,w \in T_xM$ are orthogonal.
The curvature tensor~$R$ is well-defined by the familiar formula almost everywhere in~$M$.
The X-ray transform of tensor fields is defined in section~\ref{subsubsec:transform}.

\begin{theorem}
\label{thm:c11-s-injectivity}
Let~$(M,g)$ be a simple~$C^{1,1}$ manifold (see section~\ref{sec:definitions-notation}) with almost everywhere non-positive sectional curvature.
Let~$m \ge 1$ be an integer.
\begin{enumerate}
\item\label{item:thm1} If~$p \in C^{1,1}(M)$ is a symmetric~$(m-1)$-tensor field vanishing on~$\partial M$, then the X-ray transform $I(\sigma\nabla p)$ of its symmetrized covariant derivative vanishes.

\item\label{item:thm2} If the X-ray transform~$If$ of~a symmetric~$m$-tensor field~$f \in C^{1,1}(M)$ vanishes, there is a symmetric~$(m-1)$-tensor field~$p \in \Lip(M)$ vanishing on~$\partial M$ so that~$f = \sigma\nabla p$ almost everywhere on $M$.
\end{enumerate}
\end{theorem}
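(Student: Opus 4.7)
The first statement follows from the fundamental theorem of calculus along geodesics. For any maximal geodesic $\gamma \colon [0,L] \to M$ with endpoints on $\partial M$, the map $t \mapsto p_{\gamma(t)}(\dot\gamma(t), \dots, \dot\gamma(t))$ is absolutely continuous because $p \in C^{1,1}(M)$ and $\gamma$ is $C^{1,1}$ under the metric regularity assumed, and its almost-everywhere derivative equals $(\sigma\nabla p)_{\gamma(t)}(\dot\gamma(t), \dots, \dot\gamma(t))$ since $\dot\gamma$ is parallel along itself. Integration together with $p|_{\partial M} = 0$ gives $I(\sigma\nabla p)(\gamma) = 0$.

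For the second statement I would adapt the classical Pestov energy method to the $C^{1,1}$ setting. Lift $f$ to a function on the sphere bundle by $\tilde f(x, v) = f_x(v, \dots, v)$ and introduce the integral function
\[
u(x, v) = \int_0^{\tau(x, v)} \tilde f(\varphi_t(x, v)) \, \d t,
\]
where $\varphi_t$ is the geodesic flow and $\tau$ the exit time. Then $X u = -\tilde f$ in the interior of $SM$, and $u$ vanishes on the influx boundary $\dooin SM$ exactly when $If = 0$. The goal is to show that $u(x, \cdot)$ is, for each $x \in M$, a polynomial in $v$ of degree at most $m - 1$; such a $u$ corresponds to a symmetric $(m-1)$-tensor field $p$ on $M$ via the standard spherical harmonic decomposition of functions on $SM$, the equation $X u = -\tilde f$ then translates to $\sigma\nabla p = f$ almost everywhere on $M$, and $u|_{\dooin SM} = 0$ forces $p|_{\partial M} = 0$.

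The polynomial statement comes from the Pestov identity applied to $u$. Schematically,
\[
\norm{\nabla_{\mathtt{v}} X u}^2 = \norm{X \nabla_{\mathtt{v}} u}^2 - \ip{R(v, \nabla_{\mathtt{v}} u) v}{\nabla_{\mathtt{v}} u} + (\text{boundary terms}),
\]
where $R$ is the Riemann curvature. The transport equation controls the left side in terms of vertical derivatives of $\tilde f$, non-positive sectional curvature makes the curvature contribution non-negative, and the boundary terms vanish on $\dooin SM$ while being handled on $\dooout SM$ by a separate boundary determination argument for $u$. Summing the resulting estimate over vertical Fourier components forces the components of $u$ of vertical degree at least $m$ to vanish.

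The main obstacle is that the $C^{1,1}$ assumption degrades every object in this outline by one derivative: $SM$ is only $C^{1,1}$, the geodesic vector field $X$ has Lipschitz coefficients, the horizontal--vertical splitting of $T(SM)$ is Lipschitz, and the Riemann curvature tensor exists only in $L^\infty$. Commutator identities like $[X, \nabla_{\mathtt{v}}]$, Santaló-type integration formulas, and the Pestov identity itself must therefore be validated in asymmetric function spaces such as $\C{k}{\ell}{SM}$ and $\HS{k}{\ell}{SM}$, which track vertical and horizontal regularity separately and in which many relevant objects have more vertical than horizontal regularity. The regularity of $u$ is particularly delicate: horizontal derivatives bleed regularity through the geodesic flow, so one must exploit vertical smoothness to even make the Pestov computation meaningful. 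A further subtlety is the boundary determination of $p$, which in the smooth case uses higher derivatives of $f$ on $\partial M$ that are no longer available and must be obtained by a different route.
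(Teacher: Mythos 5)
Your overall strategy for both items matches the paper's: item~\ref{item:thm1} via the fundamental theorem of calculus along geodesics, and item~\ref{item:thm2} via boundary determination, the integral function~$u$, the transport equation, the Pestov identity, and the spherical harmonic decomposition. However, your description of the Pestov step has three concrete gaps that matter.

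First, the Pestov identity is not applied once to~$u$ and then ``summed over Fourier components.'' It is applied to each spherical harmonic component~$u_k$ individually, yielding $\norm{X_-u_k}^2 \le C(n,k)\norm{X_+u_k}^2$. Then, since projecting $Xu=-\hat f$ to degree~$k+1$ with~$f_{k+1}=0$ (for~$k \ge m$) gives $X_+u_k = -X_-u_{k+2}$, one iterates to obtain $\norm{X_+u_k}^2 \le B(n,l,k)\norm{X_+u_{k+2l}}^2$. The decisive observation is that~$B(n,l,k)^{-1}$ decays only like~$l^{-1/2}$, so if some~$\norm{X_+u_{m_0}}$ with~$m_0 \ge m$ were nonzero, then~$\sum_l \norm{X_+u_{m_0+2l}}^2$ would diverge, contradicting~$X_+u \in L^2$. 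Without this iteration-and-divergence mechanism (and the fact that~$X_+u \in L^2$), the step ``forces components of vertical degree~$\ge m$ to vanish'' is unsubstantiated.

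Second, you omit the parity argument showing~$u_k=0$ for~$k \equiv m \pmod 2$, which follows from $u(x,-v) = (-1)^{m+1}u(x,v)$ (a consequence of~$If=0$). This is needed because a symmetric~$(m-1)$-tensor field lifts to a function with spherical harmonic components of parity~$m-1$ only, so killing just the degrees~$\ge m$ does not by itself put~$u$ in the correct subspace. Also, killing~$X_+u_k$ alone does not kill~$u_k$: one still needs the injectivity of~$X_+$ (a Friedrichs-type inequality on~$SM$) to conclude~$u_k=0$.

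Third, your phrase ``boundary terms \dots\ handled on~$\dooout SM$ by a separate boundary determination argument for~$u$'' misattributes the role of boundary determination. Since~$\tau=0$ on~$\dooout SM$, the integral function~$u$ automatically vanishes on \emph{all} of~$\partial(SM)$ once~$If=0$; no additional argument is required there. The boundary determination lemma is for the \emph{tensor}~$p_0$: one constructs~$p_0 \in C^{1,1}_0(M)$ with~$\sigma\nabla p_0|_{\partial M} = f|_{\partial M}$, replaces~$f$ by~$\hat f = f - \sigma\nabla p_0 \in \Lip_0(M)$, and it is precisely the vanishing of~$\hat f$ on~$\partial M$ that makes~$u^{\hat f}$ Lipschitz on~$SM$, enabling the whole analysis. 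Your final paragraph correctly identifies this as a subtle point for the potential, but the earlier sentence conflates it with boundary behaviour of~$u$.

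Your discussion of the low-regularity obstacles (asymmetric spaces~$\C{k}{\ell}{SM}$, Lipschitz splitting, $L^\infty$ curvature, validation of commutators and Santal\'o's formula) correctly identifies the technical content of the paper.
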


\subsection{Regularity discussion}

Claims~\ref{item:thm1} and~\ref{item:thm2} in theorem~\ref{thm:c11-s-injectivity} are not symmetric.
The difference is in the regularity of the potential~$p$ and we believe this is only a consequence of our proof techniques.

There are two notions of smoothness of any given order of a tensor field:
regularity with respect to the smooth structure and existence of high order covariant derivatives.
The covariant concept of smoothness is more natural on a Riemannian manifold.
For a typical tensor field~$f$ that is~$C^\infty$ smooth in the sense of the smooth structure, the covariant derivative~$\nabla f$ is typically only Lipschitz when $g\in C^{1,1}$.
The metric tensor~$g$ and its tensor powers are examples of non-vanishing and non-smooth (in the sense of the smooth structure) tensor fields for which covariant derivatives of all orders are well defined.
Thus neither of the two notions of smoothness implies the other in general.
The two notions of~$C^{1,1}$ and less regular H\"older spaces of tensor fields agree, but they disagree for higher regularity.
Therefore there are, for example, two different spaces~$C^{2,1}$ and we do not use such confusing spaces at all.

We focus on optimizing the regularity of the Riemannian metric~$g$, but we did not pursue optimizing regularity of the tensor fields~$f$ or~$p$, the boundary~$\partial M$ or the integral function~$u^f$ of~$f$ (see equation~\eqref{eq:function-uf}).

It is important for our key regularity result (lemma~\ref{lma:regularity-u} below) that the boundary values of the tensor field are determined by the data to the extent allowed by gauge freedom.
A boundary determination result for~$2$-tensor fields in the smooth case, where~$g$ is~$C^\infty$, can be found in~\cite[Lemma 4.1]{SUBRASFGSM}.
Their result is based on clever analysis of equation~$2f_{ij} = p_{i;j} + p_{j;i}$ in boundary normal coordinates.
Although the argument in~\cite{SUBRASFGSM} works nicely in the smooth case, it does not give the desired result if~$g$ is only~$C^{1,1}$ and~$f$ is~$C^{1,1}$.
The immediate conclusion of their argument in the non-smooth case would be that~$p$ has derivatives in some directions and is Lipschitz continuous, whereas in lemma~\ref{lma:boundary-determination} we find a~$p$ in the class~$C^{1,1}$.
The other difficulty in adapting similar arguments to the non-smooth case is the regularity of boundary normal coordinates.

To avoid these issues we prove a boundary determination result (lemma~\ref{lma:boundary-determination}) by a more explicit approach.
Our construction gives a potential~$p \in C^{1,1}(M)$ satisfying $\sigma\nabla p|_{\partial M} = f|_{\partial M}$ when~$f \in C^{1,1}(M)$.
The cost of our method compared to the method of~\cite{SUBRASFGSM} is losing control of the $1$-jets in any neighbourhood of the boundary, but leading order boundary determination suffices for our needs.

We lose a derivative in the regularity of~$p$ twice in our argument:
\begin{enumerate}
    \item \label{item:deriv-loss-1} We lose a derivative of~$p$ in the boundary determination result. Even if the tensor field~$f \in C^{l,1}(M)$ and the Riemannian metric~$g \in C^{k,1}(M)$ are assumed to have any (finite) amounts of derivatives, we only get~$p \in C^{\min(k,l),1}(M)$.
    Particularly,~$p$ is only~$C^{1,1}$, when~$g$ and~$f$ are~$C^{1,1}$.
    To our knowledge, our boundary determination result is optimal in the literature for differentiability of the potential~$p$ with properties~$\sigma\nabla p = f$ and~$p = 0$ on the boundary.
    
    One might expect~$f|_{\partial M} = \sigma\nabla p|_{\partial M}$, where~$f \in C^{1,1}(M)$ and~$p \in C^{2,1}(M)$.
    The space~$C^{2,1}(M)$ is problematic as described above.
    In order to improve the regularity of~$p$ one needs to make sense of higher regularity and prove a suitable ellipticity result, but we will not explore this avenue.

    \item \label{item:deriv-loss-2} Secondly, we lose a derivative of~$p$ in the transition of regularity from the spherical harmonic components of~$f$ to the spherical harmonic components of the integral function~$u \coloneqq u^f$ of~$f$ (see section~\ref{sec:definitions-notation}).
    Consider the smooth case, where~$g \in C^\infty$, and let~$f = f_m + f_{m-2} + f_{m-4} + \cdots$ and~$u = u_{m-1} + u_{m-3} + u_{m-5} + \cdots$ be the spherical harmonic decompositions of~$f$ and~$u$.
    The geodesic vector field~$X$ on the unit sphere bundle of~$M$ splits into the two operators~$X_+$ and~$X_-$ in each spherical harmonic degree (see section~\ref{sec:definitions-notation}).
    Projecting the transport equation~$Xu = -f$ into each spherical harmonic degree gives~$X_+u_{m-1} = -f_m$ and~$X_+u_{k-1} = -f_k - X_-u_{k+1}$ for~$k \le m-2$ with~$k \equiv m \pmod 2$.
    The operator~$X_+$ is known to be an elliptic pseudodifferential operator of order one (see e.g.~\cite{PSUIDBTTT}) and thus by elliptic regularity we see that each~$u_k$ has one more derivative than the corresponding component~$f_{k+1}$.
    This argument shows that~$u$ has one more derivative than~$f$, proving that~$p$ is~$C^{1,1}$ when~$f$ is Lipschitz.

    However, when~$g \in C^{1,1}(M)$ the phase space~$SM$ is not equipped with a smooth structure and the meaning of ellipticity and its implications such as existence of a parametrix, become less clear.
    The exact formulation and application of ellipticity in the present low regularity setting would be a considerable task and would still not give fully matching regularities in the two parts of theorem~\ref{thm:c11-s-injectivity}.
    Therefore we take a simpler route and do not pursue a fully symmetric version of our main theorem.
\end{enumerate}

\subsection{Related results}

The study of the X-ray transform via the transport equation and Pestov identity approach begun with the work of Mukhometov~\cite{MukhometovIKPSP,MukhometovOPRRM,MukhometovRPTDRM}, where injectivity results for the transform of scalar functions were proved.
Since Mukhometov's seminal articles, the Pestov identity method has been applied to the case of~$1$-forms in~\cite{ARUDFFDIAG} and to higher order tensors in~\cite{PSUIDBTTT,PSUTTS}.
Besides manifolds with boundaries, Pestov identities are useful in the study of integral data of functions and tensor fields over closed curves on closed Anosov manifolds~\cite{CSSRCNCM,DSSPIGAM,PSUSRIDAS,PSUIDBTTT,USDBRSRRSNFP}.
The method is even applicable in non-compact geometries.
For results on Cartan--Hadamard manifolds see~\cite{LehtonenGRTTDCHM,LRSTTCHM}.
There are plenty of other geometrical variations of the problem, which have been studied employing a Pestov identity.
These include reflecting obstacles inside the manifold~\cite{ISBRTRSCO,IPBRTORO}, attenuations and Higgs fields~\cite{SUARTSS,PSUARTCHF,GPSXRTCNC}, manifolds with magnetic flows~\cite{Jol20071,Jol20072,DPSUBRPPMF,AinsworthAMRTS,MPIPGD}, and non-Abelian variations~\cite{FUXRTNACTD,PSNAXRTS,FNPCINNXT,NovikovNARTIT}.
The Pestov identity approach has been studied in more general geometries than Riemannian.
For results in Finsler geometry see~\cite{ADXRTGFCFS,IMGRTSSRFM} and for pseudo-Riemannian geometry~\cite{IlmavirtaXRTPR}.

Only few injectivity results exist outside smooth geometry, whether Riemannian or not.
Injectivity of the scalar X-ray transform is known spherically symmetric~$C^{1,1}$ regular manifolds satisfying the Herglotz condition, when the conformal factor of the metric is~$C^{1,1}$~\cite{HIATLRAXTSSM}.
The scalar (and $1$-form) X-ray transform is (solenoidally) injective on simple~$C^{1,1}$ manifolds~\cite{IKPIXRTMLR}.
The proof of injectivity in~\cite{IKPIXRTMLR} is based on a Pestov identity.

The boundary rigidity problem is a geometrization of the travel time tomography problem and its linearization is the X-ray tomography problem of $2$-tensor fields.
For results in boundary rigidity see~\cite{MukhometovRPTDRMIG,UhlmannIPSU,CrokeRDBBP,CRSNPC,SURMWSLG,MRPFIRMDS,PUTDCSRMBR,GMTBLRNCM,LSUSBRRM,BIBRFVMMCFO}.
For a comprehensive survey on results in travel time tomography and tensor tomography see~\cite{SUVZTTT,IMIGMBA}.

\subsection{Acknowledgements}

Both authors we supported by the Academy of Finland (JI by grant 351665, AK by grant 351656).
AK was supported by the Finnish Academy of Science and
Letters.
This work was supported by the Research Council of Finland (Flagship of Advanced Mathematics for Sensing Imaging and Modelling grant 359208 and Centre of Excellence of Inverse Modelling and Imaging 353092).
We thank the anonymous referees for many valuable comments and suggestions.

\section{Proof of the main theorem}
\label{sec:proof-of-theorem}

\subsection{Basic definitions and notation}
\label{sec:definitions-notation}

In this subsection we present enough terminology and notation to state and prove our main theorem.
The preliminaries of the non-smooth setting are complemented in section~\ref{sec:preliminaries}.

Throughout the article~$M$ will be a compact and connected smooth manifold with a smooth boundary~$\partial M $.
The manifold~$M$ is equipped with a~$C^{1,1}$ regular Riemannian metric~$g$.

\subsubsection{Bundles}
\label{subsubsec:bundles}

The tangent bundle~$TM$ of~$M$ has a subbundle~$SM$ called the unit sphere bundle, which consists of the unit vectors in~$TM$.
As the level set $F^{-1}(1)$ of the~$C^{1,1}$ map~$F \colon TM \to \R$ defined by~$F(x,v) = g_x(v,v)$ the unit sphere bundle is a~$C^{1,1}$ submanifold\footnote{It is easily verified by inspecting the vertical component that the differential $\der F$ is non-zero when $F=1$. The smooth regular level set theorem~\cite{Lee2013} can easily be adapted to our case.} of~$TM$.
The boundary
\begin{equation}
\partial(SM)
\coloneqq
\{\,
(x,v) \in SM
\,:\,
x \in \partial M
\,\}
\end{equation}
of~$SM$ is divided into inwards and outwards pointing parts~$\doo{in}(SM)$ and~$\doo{out}(SM)$ with respect to the inner product~$\ip{\cdot}{\cdot}_{g}$ and a unit normal vector field~$\nu$ to the boundary~$\partial M$.
The subset of $\partial(SM)$ consisting of the vectors~$v$ such that~$\ip{v}{\nu}_g = 0$ is denoted by~$\partial_0(SM)$ and it is disjoint from~$\doo{in}(SM)$ and~$\doo{out}(SM)$.

Let~$\pi \colon SM \to M$ be the standard projection and let~$\pi^\ast(TM)$ be the pullback of~$TM$ over~$SM$.
We denote by~$N$ the subbundle of~$\pi^\ast(TM)$ with the fiber~$N_{(x,v)}$ being the~$g$-orthogonal complement of~$v$ in~$T_xM$.

\subsubsection{Horizontal--vertical decomposition}
\label{subsubsec:hv}

The tangent bundle~$T(SM)$ of~$SM$ has an orthogonal splitting $T(SM) = \R X \oplus \H \oplus \V$ with respect to the so-called Sasaki metric, where~$\H$ and~$\V$ are the horizontal and vertical subbundles respectively and~$X$ is the geodesic vector field on~$SM$.
We denote~$\R X \oplus \H$ by~$\overline{\H}$ and call it the total horizontal subbundle.
Elements of~$\H$ and~$\V$ are respectively referred to as horizontal and vertical derivatives or vectors on~$SM$.
The summands~$\H$ and~$\V$ are each naturally identified with a copy of the bundle~$N$.
The horizontal--vertical geometry is essentially the same as the smooth one (see~\cite{PaternainGF}) and works fine when~$g \in C^{1,1}$.

\subsubsection{Geodesic flow}
\label{subsubsec:flow}

Since the Christoffel symbols of a $C^{1,1}$ metric are Lipschitz, there is a unique unit speed geodesic~$\gamma_z$ corresponding to a given initial condition $z \in SM$ by standard ODE theory.
We define the geodesic flow on the unit sphere bundle to be the collection of (partially defined) maps~$\phi_t \colon SM \to SM$, $\phi_t(z) = (\gamma_z(t),\dot\gamma_z(t))$, where~$t$ goes through all real numbers so that the right-hand side is defined.
The infinitesimal generator~$X$ of the flow is called the geodesic vector field on~$SM$.
For any~$z \in SM$, the geodesic~$\gamma_z$ is defined on a maximal interval of existence~$[-\tau_-(z),\tau_+(z)]$, where~$\tau_-(z)$ and~$\tau_+(z)$ are positive.
We call~$\tau(z) \coloneqq \tau_+(z)$ the travel time function on~$SM$.
The geodesic vector field~$X$ acts naturally on functions by differentiation and on sections~$W$ of the bundle~$N$ it acts by
\begin{equation}
\label{eq:X-action-field}
XW(z) = D_tW(\phi_t(z))|_{t=0},
\end{equation}
where~$D_t$ is the covariant derivative along the curve~$t \mapsto \phi_t(z)$.
The result~$XW$ of the action~\eqref{eq:X-action-field} is again a section of~$N$.

\subsubsection{The X-ray transform}
\label{subsubsec:transform}

Any symmetric~$m$-tensor field~$f$ on~$M$ can be considered as a function on the unit sphere bundle.
Given~$(x,v) \in SM$ we let~$f(x,v) \coloneqq f_x(v,\dots,v)$.
In lemma~\ref{lma:e-ae} and proposition~\ref{prop:identification} and their proofs we denote the induced maps by $\lambda_xf \colon S_xM \to \R$ and $\lambda f \colon SM \to \R$ with $\lambda f(x,v) = \lambda_x f(v)$.
Otherwise we freely identify $f$ with $\lambda f$ since there is no danger of confusion.

The integral function~$u^f \colon SM \to \R$ of a continuous symmetric~$m$-tensor field~$f$ is defined by
\begin{equation}
\label{eq:function-uf}
u^f(x,v)
\coloneqq
\int_0^{\tau(x,v)}
\lambda f(\phi_t(x,v))
\,\d t
\end{equation}
for all~$(x,v) \in SM$.
The X-ray transform of~$f$ is the restriction of the integral function to the inward pointing part of the boundary~$\partial(SM)$, so we may declare~$If \coloneqq u^f|_{\doo{in}(SM)}$.

\subsubsection{Differentiability}

We exclude the rank of the tensor field from our notations for function spaces.
For tensor fields the derivatives are covariant.
We use the subscript~$0$ to indicate zero boundary values.
Thus, for example, $f\in C_0^{1,\alpha}(M)$ for a tensor field~$f$ means that $f|_{\partial M}=0$ and $\nabla f$ is $\alpha$-Hölder.
We use two kinds of functions on the sphere bundle~$SM$, scalars (e.g. $C^1(SM)$) and sections of the bundle~$N$ (e.g. $C^1(N)$) defined in subsection~\ref{subsubsec:bundles}.

We define $\C{k,\alpha}{l,\beta}{SM}$ as the subset of~$C(SM)$ consisting of functions with~$k$ many~$\alpha$-Hölder horizontal derivatives and~$l$ many~$\beta$-Hölder vertical derivatives as well as any combination of~$k$ horizontal and~$l$ vertical derivatives, which are assumed to be~$\omega$-Hölder for~$\omega \coloneqq \min(\alpha,\beta)$.
We let
\begin{equation}
\C{k,\alpha}{\infty}{SM}
\coloneqq
\bigcap_{l=0}^\infty
\C{k,\alpha}{l,1}{SM}.
\end{equation}

According to the splitting~$T(SM) = \R X \oplus \H \oplus \V$, the gradient of a~$C^1$ function~$u$ on~$SM$ can be written as
\begin{equation}
\nabla u
=
((Xu)X,\grad{h}u,\grad{v}u).
\end{equation}
This gives rise to two new differential operators;
the vertical gradient~$\grad{h}$ and the horizontal gradient~$\grad{v}$.
Both~$\grad{h}u$ and~$\grad{v}u$ are naturally identified with sections of the bundle~$N$.
The horizontal and vertical divergences are the~$L^2$ adjoints of the corresponding gradients.
The $L^2$ adjoint of $X$ is $-X$.
The vertical Laplacian on the sphere bundle is $\lap{v} \coloneqq -\dive{v}\grad{v}$; see~\cite[Appendix A]{PSUIDBTTT} for details on the differential operators.

\subsubsection{Curvature}

By Rademacher's theorem a Lipschitz continuous scalar function on a Euclidean domain is differentiable almost everywhere and the derivative is in~$L^\infty$.
Using local coordinates and studying the individual components shows that the Riemann curvature tensor $R_{ijkl}(x)$ corresponding to a Riemannian metric $g\in C^{1,1}$ has all components well defined for almost all $x\in M$.
Thus we may interpret the curvature tensor~$R$ as an~$L^\infty$ tensor field.
The curvature tensor~$R \colon L^\infty(N) \to L^\infty(N)$ acts on sections of the bundle~$N$ by $R(x,v)W(x,v) \coloneqq R(W(x,v),v)v$ producing again~$L^\infty$ sections of the bundle~$N$.

We say that the sectional curvature of the manifold~$M$ is almost everywhere non-positive, if for almost all~$x \in M$ it holds that~$\ip{R(w,v)v}{w}_{g(x)} \le 0$ for all linearly independent~$v,w \in T_xM$.

\subsubsection{Sobolev spaces}
\label{sec:sobolev-spaces}

There are natural~$L^2$ spaces for functions on the sphere bundle as well as for sections of the bundle~$N$, which we will denote by~$L^2(SM)$ and~$L^2(N)$.
We define the Sobolev spaces~$H^1(SM)$ and~$H^1(N,X)$ respectively defined as completions of~$C^{1}(SM)$ and~$C^{1}(N)$ with respect to the norms
\begin{equation}
\begin{split}
\norm{u}^2_{H^1(SM)}
&\coloneqq
\norm{u}^2_{L^2(SM)}
+
\norm{Xu}^2_{L^2(SM)}
+
\norm{\grad{h}u}^2_{L^2(SM)}
+
\norm{\grad{v}u}^2_{L^2(SM)},
\quad\text{and}
\\
\norm{W}^2_{H^1(N,X)}
&\coloneqq
\norm{W}^2_{L^2(N)}
+
\norm{XW}^2_{L^2(N)}.
\end{split}
\end{equation}
We denote zero boundary values by a subindex~$0$. For example, $H_0^1(SM)$ is the subspace of $H^1(SM)$ with zero boundary values.

\subsubsection{Spherical harmonics}

Given~$x \in M$, the unit sphere~$S_xM$ has the Laplace--Beltrami operator~$\lap{v}_x \coloneqq -g^{ij}(x)\partial_{v^i}\partial_{v^j}$.
Letting~$x \in M$ vary we get a second order operator~$\lap{v} = -\dive{v}\grad{v}$ on the unit sphere bundle called the vertical Laplacian, where~$-\dive{v}$ is the formal~$L^2$-adjoint of~$\grad{v}$.

Let~$S^{n-1} \subseteq \R^n$ be the Euclidean unit sphere.
It is well-know that any function~$f \in L^2(S^{n-1})$ can be decomposed as an~$L^2$-convergent series~$f = \sum_{k=0}^\infty f_k$, where~$f_k$ are eigenfunctions of the spherical Laplacian on~$S^{n-1}$ corresponding to the eigenvalues $k(k+n-2)$.
Similarly, any function~$u \in L^2(SM)$ can be decomposed as an~$L^2(SM)$-convergent series~$u = \sum_{k=0}^\infty u_k$,
where $\lap{v}u_k = k(k+n-2)u_k$ for all $k \in \N$. We call $u_k$ the
$k$th spherical harmonic component of $u$. For $k \in \{0,1\}$, $k,l \in \N$ and
$\alpha,\beta \in [0,1]$ we let
\begin{equation}
\OS{k,\alpha}{l,\beta}{m}
\coloneqq
\{\,
u
\in
\C{k,\alpha}{l,\beta}{SM}
\,:\,
\lap{v}u
=
m(m+n-2)u
\,\}
\end{equation}
and
\begin{equation}
\OS{k,\alpha}{\infty}{m}
\coloneqq
\bigcap_{l \in \N}
\OS{k,\alpha}{l,1}{m}.
\end{equation}
Furthermore, we denote
\begin{equation}
\LS{k}{l}{m}
=
\{\,
u
\in
\HS{k}{l}{SM}
\,:\,
\lap{v}u
=
m(m+n-2)u
\,\}.
\end{equation}

For all $m \in \N$ there are operators $X_\pm \colon \OS{1}{\infty}{m} \to \OS{0}{\infty}{m \pm 1}$ with the convention that $\OS{0}{\infty}{-1} = 0$ so that $X = X_+ + X_-$. These mapping properties and validity of this decomposition in low regularity are addressed in proposition~\ref{prop:x-mapping-property}.

\subsubsection{Simple~$C^{1,1}$ manifolds}

The global index form~$Q$ of the manifold~$(M,g)$ (not of a single geodesic) is the quadratic form defined for~$W \in H^1_0(N,X)$ by
\begin{equation}
Q(W)
\coloneqq
\norm{XW}^2_{L^2(N)}
-
\iip{RW}{W}_{L^2(N)}.
\end{equation}
It was proved in~\cite[Lemma 11]{IKPIXRTMLR} that there are no conjugate points on a Riemannian manifold~$(M,g)$, $g \in C^\infty$, if the global index form~$Q$ of~$(M,g)$ is positive definite.

We conclude this subsection by recalling a definition of a simple manifold in the case~$g \in C^{1,1}$.
Our definition is equivalent to the definition of traditional simple manifold when~$g \in C^\infty$~\cite{IKPIXRTMLR}.
Let~$M \subseteq \R^n$ be the closed Euclidean unit ball and let~$g$ be a~$C^{1,1}$ regular Riemannian metric on~$M$.
We say that $(M,g)$ is \emph{a simple~$C^{1,1}$ Riemannian manifold} if the following hold:
\begin{enumerate}[label=A{{\arabic*}}:, ref=A{{\arabic*}}]
\item\label{a1} There is~$\varepsilon > 0$ so that ~$Q(W) \ge \varepsilon\norm{W}^2_{L^2(N)}$ for all~$W \in H_0^1(N,X)$.

\item\label{a2} Any two points of~$M$ can be joined by a unique geodesic in the interior of~$M$, whose length depends continuously on its end points.

\item\label{a3} The squared travel time function~$\tau^2$ (see~\ref{subsubsec:flow}) is Lipschitz on~$SM$.
\end{enumerate}

\subsection{Proof of the theorem}

In this subsection we prove our main result, theorem~\ref{thm:c11-s-injectivity}.
We state the lemmas required for the proof of~\ref{thm:c11-s-injectivity}, and the proofs of the lemmas are postponed to sections~\ref{sec:bnd-determ-regularity}, \ref{sec:estimates-santalo}, and~\ref{sec:friedrichs}.

\begin{lemma}[Boundary determination]
\label{lma:boundary-determination}
Let $(M,g)$ be a simple~$C^{1,1}$ manifold. If $f \in C^{1,1}(M)$ is a symmetric~$m$-tensor field with~$If = 0$, then there is a symmetric~$(m-1)$-tensor field~$p \in C^{1,1}(M)$ so that $f|_{\partial M} = \sigma\nabla p|_{\partial M}$ and~$p|_{\partial M} = 0$.
\end{lemma}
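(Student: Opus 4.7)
The plan has two conceptually distinct parts: first, extracting tangential information about $f$ from the hypothesis $If=0$; and second, writing down an explicit potential $p$ near~$\partial M$ in boundary normal coordinates.

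\emph{Step 1: The tangential trace of $f$ vanishes on $\partial M$.} I claim that $\lambda f(x,v)=f_x(v,\ldots,v)=0$ for every $x\in\partial M$ and $v\in T_x\partial M$. Given a tangential unit vector $v$ at $x$, approximate it by inward-pointing unit vectors $v_\varepsilon\in\doo{in}(SM)$ with $v_\varepsilon\to v$. Assumption~\ref{a3} makes $\tau^2$ continuous and vanishing at $\partial_0(SM)$, so $\tau(v_\varepsilon)\to 0$ while $\tau(v_\varepsilon)>0$. Rescaling the integral and using continuity of $f$ and of the geodesic flow (which holds by the Picard--Lindelöf theorem applied to the Lipschitz geodesic spray),
\begin{equation}
0=\frac{If(v_\varepsilon)}{\tau(v_\varepsilon)}=\int_0^1\lambda f\bigl(\phi_{s\tau(v_\varepsilon)}(v_\varepsilon)\bigr)\,\d s\xrightarrow{\varepsilon\to 0}\lambda f(x,v).
\end{equation}
Since $\lambda f(x,\cdot)$ is a homogeneous polynomial of degree $m$ on $T_xM$, polarization then forces all components of $f|_{\partial M}$ with purely tangential indices to vanish.

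\emph{Step 2: Explicit construction.} Work in boundary normal coordinates $(x',x^n)$ on a collar $\partial M\times[0,\delta)$; these coordinates are $C^{1,1}$ regular because $g\in C^{1,1}$, and satisfy $g_{nn}=1$, $g_{n\alpha}=0$ for $\alpha<n$. With $\chi$ a smooth cutoff equal to one near $x^n=0$ and supported in the collar, make the ansatz
\begin{equation}
p_{j_1\ldots j_{m-1}}(x',x^n)=\chi(x^n)\,x^n\,h_{j_1\ldots j_{m-1}}(x'),
\end{equation}
where $h$ is a symmetric $(m-1)$-tensor field on $\partial M$ to be chosen. Clearly $p|_{\partial M}=0$ and $\partial_np|_{\partial M}=h$. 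Because $p$ vanishes on $\partial M$, every Christoffel correction in $\nabla p$ drops out at the boundary, and tangential coordinate derivatives of $p$ vanish there too. Only the normal derivative survives, giving
\begin{equation}
m(\sigma\nabla p)_{i_1\ldots i_m}\big|_{\partial M}=\sum_{a:i_a=n}h_{i_1\ldots\widehat{i_a}\ldots i_m}.
\end{equation}
For a tuple with exactly $k\ge 1$ indices equal to $n$, symmetry of $h$ collapses this sum to $k\,h_{(j)}$, where $(j)$ is $(i)$ with one $n$ removed. Inverting this identity, the choice
\begin{equation}
h_{j_1\ldots j_{m-1}}(x')=\frac{m}{\ell+1}\,f_{nj_1\ldots j_{m-1}}\big|_{\partial M}(x'),\qquad \ell=\#\{a:j_a=n\},
\end{equation}
is consistent by symmetry of $f$ and yields $\sigma\nabla p|_{\partial M}=f|_{\partial M}$ on every tuple containing at least one normal index. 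Tuples with only tangential indices give zero on both sides, on the left by construction of $p$ and on the right by Step~1.

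\emph{Step 3: Regularity and globalization.} Since $f|_{\partial M}\in C^{1,1}(\partial M)$ and the boundary normal coordinate frame is $C^{1,1}$, the components $h$ are $C^{1,1}(\partial M)$; multiplying by the smooth factor $\chi(x^n)x^n$ produces a $C^{1,1}$ tensor field in the collar, which extends by zero past the collar thanks to $\chi$. The collar construction is canonical (via the normal exponential map) and globalizes on all of $\partial M$, so the resulting $p$ lies in $C^{1,1}(M)$ with $p|_{\partial M}=0$. The main obstacle I anticipate is the bookkeeping around the $C^{1,1}$ regularity of the boundary normal chart and of the coordinate frame used to identify the components of $h$ with a tensor field on $M$ --- a nuisance that is flagged in the regularity discussion preceding the lemma. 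The algebraic identities above are elementary in the smooth case, so most of the work is verifying that each covariant derivative evaluation at $\partial M$ is legitimate given only Lipschitz Christoffel symbols and a $C^{1,1}$ coordinate frame.
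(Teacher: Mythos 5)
Your Step~1 (vanishing of the purely tangential trace via short geodesics and $\tau\to 0$) and the algebraic formula for $p$ in Step~2 coincide with the paper's: in the paper's notation $p_{j_1\cdots j_l n\cdots n}=\tfrac{m}{m-l}x^n f_{j_1\cdots j_l n\cdots n}(\hat x,0)$, which is exactly your $h=\tfrac{m}{\ell+1}f_{n\,\cdot}|_{\partial M}$ with $\ell=m-1-l$, and the weighted-count computation in the symmetrized covariant derivative is the same.

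The genuine gap is in Steps~2--3: you assert that boundary normal coordinates are ``$C^{1,1}$ regular because $g\in C^{1,1}$'' and that the collar via the normal exponential map ``globalizes,'' then relegate the justification to ``bookkeeping.'' This is precisely the pitfall the paper singles out and circumvents. The normal exponential map $(x',t)\mapsto\exp_{x'}(t\nu(x'))$ is obtained by flowing a geodesic spray whose coefficients (the Christoffel symbols) are only Lipschitz. The flow of a Lipschitz vector field is Lipschitz in initial data, but not in general $C^1$, and certainly not $C^{1,1}$; the Jacobi equation, which would govern the differential of this map, has only $L^\infty$ coefficients. So the boundary normal chart is a priori only bi-Lipschitz, the pushed-forward $g$ need not remain $C^{1,1}$ in it, and the claim that ``the boundary normal coordinate frame is $C^{1,1}$'' is unsubstantiated. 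The resulting $p$ would then only be Lipschitz, which is not the conclusion of the lemma. This is not bookkeeping --- it is the reason the paper does \emph{not} use boundary normal coordinates here.

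The fix is easy and is what the paper does: your computation never actually uses $g_{nn}=1$ or $g_{n\alpha}=0$. The Christoffel terms in $\nabla p$ vanish on $\partial M$ simply because $p|_{\partial M}=0$, and tangential derivatives of $p$ vanish there for the same reason --- in \emph{any} coordinates adapted to the boundary. Since $M$ is by hypothesis a smooth manifold with smooth boundary, one can work in ordinary $C^\infty$ local charts near each boundary point with $\{x^n=0\}=\partial M$, define $p$ locally by exactly your formula, and glue the local pieces by a partition of unity (checking, as the paper does, that the cutoffs do not disturb the equality $\sigma\nabla p|_{\partial M}=f|_{\partial M}$ because $p_i$ vanishes on $\partial M\cap\supp\psi_i$). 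This gives a global $C^{1,1}$ potential without ever invoking the exponential map. You should replace the boundary normal chart with a smooth boundary chart and the canonical collar with a partition-of-unity argument.
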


\begin{lemma}[Regularity of spherical harmonic components]
\label{lma:regularity-u}
Let $(M,g)$ be a simple~$C^{1,1}$ manifold.
Let $f \in \Lip_0(M)$ be a symmetric~$m$-tensor field on~$M$ with~$If=0$ and let~$u \coloneqq u^f$ be the integral function of~$f$ defined by~\eqref{eq:function-uf}.
If the spherical harmonic decomposition of~$u$ is~$u = \sum_{k=0}^\infty u_k$, then~$u_k \in \OS{0,1}{\infty}{k}$ and~$u_k|_{\partial(SM)} = 0$ for all $k \in \N$.
\end{lemma}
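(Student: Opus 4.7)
My plan is in three steps: establish the boundary vanishing $u|_{\partial(SM)}=0$ where $u\coloneqq u^f$; prove Lipschitz regularity of $u$ on $SM$; and transfer to each spherical harmonic component via a fiberwise projection. For the boundary vanishing, $\tau$ is zero on $\doo{out}(SM)$ by definition, and by continuity of $\tau$ (implied by $\tau^2\in\Lip(SM)$ from A3) also on $\partial_0(SM)$, which lies in the closure of $\doo{out}(SM)$ inside $\partial(SM)$. Hence $u=0$ on $\doo{out}(SM)\cup\partial_0(SM)$ by construction, while on $\doo{in}(SM)$ we have $u=If=0$ by hypothesis. Since $\partial(SM)$ is a union of full vertical fibres $S_xM$ and the spherical harmonic projection is performed fiberwise, every $u_k$ vanishes on $\partial(SM)$.

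For the Lipschitz estimate, fix $z,z'\in SM$ with $\tau(z)\ge\tau(z')$ and decompose
\[
u(z)-u(z') = \int_0^{\tau(z')}\bigl[\lambda f(\phi_t(z))-\lambda f(\phi_t(z'))\bigr]\,\d t + \int_{\tau(z')}^{\tau(z)}\lambda f(\phi_t(z))\,\d t.
\]
The first integral is $\mathcal{O}(|z-z'|)$ by Lipschitz continuity of $\lambda f$ together with uniform Lipschitz dependence of $\phi_t$ on initial conditions for $t$ in the compact range $[0,\diam(M)]$ (standard ODE theory for the $C^{1,1}$ geodesic equation). For the second integral, the endpoint $\pi\phi_{\tau(z)}(z)$ lies in $\partial M$ where $f$ vanishes, so Lipschitz continuity of $\lambda f$ along $\gamma_z$ gives $|\lambda f(\phi_t(z))|\le L_f(\tau(z)-t)$ for $0\le t\le\tau(z)$; hence the second integral is bounded by $\tfrac{L_f}{2}(\tau(z)-\tau(z'))^2$. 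Nonnegativity of $\tau$ and A3 yield $(\tau(z)-\tau(z'))^2\le|\tau(z)^2-\tau(z')^2|\le C|z-z'|$, so $u\in\Lip(SM)$.

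Finally, fix a $C^{1,1}$ global $g$-orthonormal frame $(e_i)_{i=1}^n$ on $M$, which exists since $M$ has global coordinates and $g\in C^{1,1}$. Writing $v=\sum_i \tilde v^i e_i(x)$ identifies $S_xM$ with the round sphere $S^{n-1}\subset\R^n$ and makes the intrinsic $\lap{v}$ correspond to the standard spherical Laplacian on $S^{n-1}$. The Lipschitz function $\tilde u(x,\tilde v)\coloneqq u(x,v)$ on $M\times S^{n-1}$ therefore has spherical harmonic components
\[
\tilde u_k(x,\tilde v)=\int_{S^{n-1}}K_k(\tilde v,\tilde w)\,\tilde u(x,\tilde w)\,\d\sigma(\tilde w),
\]
each a polynomial in $\tilde v$ of degree $k$ with Lipschitz coefficients in $x$; all vertical derivatives are again polynomials in $\tilde v$ with Lipschitz $x$-dependence and so Lipschitz on $M\times S^{n-1}$. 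Pulling back through the $C^{1,1}$ frame preserves this Lipschitz regularity and matches the intrinsic spherical harmonic decomposition, giving $u_k\in\OS{0,1}{\infty}{k}$. The hardest point in this plan is the Lipschitz estimate for $u$ across $\partial_0(SM)$ where $\tau$ is only $\tfrac12$-Hölder; the quadratic factor produced by $f|_{\partial M}=0$ is exactly what is needed to absorb the $\sqrt{\vphantom{x}}$-type behavior of $\tau$ inherited from A3.
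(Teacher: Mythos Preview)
Your proof is correct and follows essentially the same approach as the paper: Lipschitz regularity of $u$ (which the paper obtains by citing \cite[Lemma~21]{IKPIXRTMLR}, while you reproduce that argument directly, including the key use of $f|_{\partial M}=0$ to absorb the $\tfrac12$-H\"older behaviour of $\tau$), followed by a fiberwise identification of $S_xM$ with the round sphere to see that each $u_k$ is a polynomial in the fibre variable with Lipschitz base coefficients. The only cosmetic differences are that the paper uses the matrix square root $g(x)^{1/2}$ rather than a global orthonormal frame for the identification, and deduces $u_k|_{\partial(SM)}=0$ from the $L^2$ orthogonality of the decomposition rather than from your case split on $\doo{in}/\doo{out}/\partial_0$.
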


\begin{lemma}
\label{lma:x+uL2}
Let $(M,g)$ be a simple~$C^{1,1}$ manifold.
Let $f \in \Lip_0(M)$ be a symmetric~$m$-tensor field on~$M$ with~$If=0$ and let~$u \coloneqq u^f$ be the integral function of~$f$ defined by~\eqref{eq:function-uf}.
Then $X_+u\in L^2(SM)$.
\end{lemma}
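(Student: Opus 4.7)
The plan is to combine the spherical harmonic decomposition from lemma~\ref{lma:regularity-u} with the projected transport equation and a Pestov-type energy estimate that exploits non-positive sectional curvature.

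First, by lemma~\ref{lma:regularity-u}, the integral function decomposes as $u = \sum_{k=0}^\infty u_k$ with each $u_k \in \OS{0,1}{\infty}{k}$ vanishing on $\partial(SM)$. This regularity is enough to define $X_\pm u_k \in L^2(SM)$ for each $k$, and $X_+u_k$ belongs to the $(k+1)$-th vertical eigenspace. Orthogonality of distinct vertical eigenspaces yields
\begin{equation*}
\|X_+u\|_{L^2(SM)}^2 = \sum_{k=0}^\infty \|X_+u_k\|_{L^2(SM)}^2,
\end{equation*}
so it suffices to show the right-hand side is finite.

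Next, I would project the transport equation $Xu = -f$ into each vertical eigenspace. Since $f$ is a symmetric $m$-tensor, its spherical harmonic decomposition $f = \sum_k f_k$ is a finite sum with $f_k = 0$ for $k > m$ and $k \not\equiv m \pmod 2$. The projection yields
\begin{equation*}
X_+u_{k-1} + X_-u_{k+1} = -f_k, \qquad k \ge 0,
\end{equation*}
with the convention $u_{-1}=0$. For $k > m$ this simplifies to $X_+u_{k-1} = -X_-u_{k+1}$, coupling the tail of $\sum_k \|X_+u_k\|_{L^2}^2$ to the corresponding tail of $\sum_k \|X_-u_k\|_{L^2}^2$ and leaving only finitely many low-degree terms to be handled individually; each of these is automatically finite by the per-degree regularity of $u_k$.

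The main step is a Pestov-type energy estimate applied to the $u_k$ (or rather to their partial sums $u^{(N)} = \sum_{k=0}^N u_k$, which have enough regularity to be tested against the identity). Under the non-positive sectional curvature assumption, the Pestov identity produces a per-degree relation between $\|X_+u_k\|_{L^2}$ and $\|X_-u_k\|_{L^2}$; chained with the recursion above and the finite energy identity $\|Xu\|_{L^2}^2 = \|f\|_{L^2}^2$, this gives a uniform-in-$N$ bound on $\sum_{k=0}^N \|X_+u_k\|_{L^2}^2$, and passing $N \to \infty$ yields $X_+u \in L^2(SM)$.

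The main obstacle is carrying out the Pestov identity and its consequences in our low regularity framework: the curvature tensor is only in $L^\infty$, the geodesic vector field is merely Lipschitz, and the standard integrations by parts must be justified via an approximation procedure, which is where the Friedrichs mollification machinery of section~\ref{sec:friedrichs} is essential. The regularity class $\OS{0,1}{\infty}{k}$ furnished by lemma~\ref{lma:regularity-u} is precisely tailored so that, once the Pestov identity has been justified by mollification and limit passage, all the required operations on the $u_k$ are licit.
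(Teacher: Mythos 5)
Your proposal takes a genuinely different route from the paper, and there is a gap at its core. The paper proves lemma~\ref{lma:x+uL2} simply by invoking lemma~\ref{lma:regularity-u} together with lemma~\ref{lma:xpm-convergence}: the latter gives the unconditional bound
\begin{equation*}
\sum_{k=0}^\infty \norm{X_+u_k}^2_{L^2(SM)} + \sum_{k=1}^\infty \norm{X_-u_k}^2_{L^2(SM)} \le \norm{Xu}^2_{L^2(SM)} + \norm{\grad{h}u}^2_{L^2(N)},
\end{equation*}
which is finite because $u\in\Lip(SM)$. Note that this uses no curvature assumption whatsoever; correspondingly, lemma~\ref{lma:x+uL2} is stated without the non-positive curvature hypothesis, whereas your proposal imports it unnecessarily.

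The real problem is that your chain of inequalities does not close. You want to combine the recursion $X_+u_{k-1} = -X_-u_{k+1}$ (valid for $k>m$, since $f_k=0$ there) with the Pestov estimate $\norm{X_-u_k}^2 \le C(n,k)\norm{X_+u_k}^2$ and the energy identity $\norm{Xu}^2_{L^2}=\norm{f}^2_{L^2}$. But the energy identity gives no information on the tail: the $k$-th spherical harmonic component of $Xu$ is $X_+u_{k-1}+X_-u_{k+1}$, and for $k>m+1$ the recursion makes this component \emph{identically zero}, so $\norm{Xu}^2$ is just a finite sum over low modes. Meanwhile, chaining the recursion with the Pestov estimate yields $\norm{X_+u_k}^2 \le C(n,k+2)\norm{X_+u_{k+2}}^2$, which is a \emph{lower} bound on the tail terms, not an upper bound --- it is precisely the inequality used in the proof of theorem~\ref{thm:c11-s-injectivity} to derive a contradiction once lemma~\ref{lma:x+uL2} is already in hand, and it cannot be turned around to give summability. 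The missing ingredient is the $\norm{\grad{h}u}^2$ term: expanding $\norm{Xu}^2+\norm{\grad{h}u}^2$ rather than $\norm{Xu}^2$ alone kills the cross terms $\iip{X_+u_{k-1}}{X_-u_{k+1}}$ and produces a positive-coefficient sum over $\norm{X_\pm u_k}^2$, which is exactly the computation carried out in appendix~\ref{app:lainalasku}. (As a minor point, section~\ref{sec:friedrichs} concerns a Friedrichs--Poincar\'e inequality along the flow, not a mollification procedure.)
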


Lemma~\ref{lma:x+uL2} follows immediately from lemmas~\ref{lma:regularity-u} and~\ref{lma:xpm-convergence}.

Recall that~$n$ is the dimension of~$M$.
For natural numbers~$k$ and~$l$ we define the two constants
\begin{equation}
C(n,k)
\coloneqq
\frac{2k+n-1}{2k+n-3}
\quad\text{and}\quad
B(n,l,k)
\coloneqq
\prod^l_{p=1}
C(n,k+2p).
\end{equation}

\begin{lemma}
\label{lma:l2-estimate}
Let~$(M,g)$ be a simple~$C^{1,1}$ manifold with almost everywhere non-positive sectional curvature.
Let~$f \in \Lip_0(M)$ be a symmetric~$m$-tensor field with~$If=0$ and denote by~$u \coloneqq u^f$ the integral function of~$f$ defined by~\eqref{eq:function-uf}.
If the spherical harmonic decomposition of~$u$ is~$u = \sum_{k = 0}^\infty u_k$, then for all $k \ge m$ and $l \in \N$ we have
\begin{equation}
\norm{X_+u_{k}}^2_{L^2(SM)}
\le
B(n,l,k)
\norm{X_+u_{k+2l}}^2_{L^2(SM)}.
\end{equation}
\end{lemma}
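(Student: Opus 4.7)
The plan is to project the transport equation $Xu = -f$ into its spherical harmonic components and then propagate the information from high to low degree using a per-degree energy inequality whose constant is precisely $C(n,j)$. By lemma~\ref{lma:regularity-u} each $u_k$ lies in $\OS{0,1}{\infty}{k}$, vanishes on $\partial(SM)$, and is Lipschitz in horizontal directions, so $X u_k \in L^\infty(SM) \subset L^2(SM)$; in particular both $X_+u_k$ and $X_-u_k$ make sense as $L^2$ sections, and lemma~\ref{lma:x+uL2} is compatible with this. Decomposing $Xu = -f$ into spherical harmonics and matching degrees gives, for every $j \in \N$,
\begin{equation}
\label{eq:plan-proj}
X_+u_{j-1} + X_-u_{j+1} = -f_j,
\end{equation}
where $f_j$ is the degree-$j$ spherical harmonic component of the symmetric $m$-tensor $f$. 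Since a symmetric $m$-tensor has no components in degrees strictly larger than $m$, we have $f_j = 0$ whenever $j > m$. Taking $j = k+1$ with $k \ge m$ in~\eqref{eq:plan-proj} therefore yields
\begin{equation}
X_+u_k = -X_-u_{k+2},
\qquad \text{and in particular} \qquad
\norm{X_+u_k}^2_{L^2(SM)} = \norm{X_-u_{k+2}}^2_{L^2(SM)}.
\end{equation}

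The second ingredient is a per-degree energy inequality: for a section $v$ of degree $j$ which is sufficiently regular and vanishes on $\partial(SM)$, the Pestov identity (once decomposed into spherical harmonic degrees and combined with the commutator relations for $X_\pm$) produces an identity of the schematic form
\begin{equation}
(2j+n-3)\norm{X_+v}^2_{L^2(SM)} - (2j+n-1)\norm{X_-v}^2_{L^2(SM)} = \text{curvature term},
\end{equation}
in which the curvature term is a non-negative multiple of $-\iip{R\grad{v}v}{\grad{v}v}_{L^2(N)}$. Under the assumption of almost everywhere non-positive sectional curvature this term is non-negative, whence
\begin{equation}
\label{eq:plan-pestov}
\norm{X_-v}^2_{L^2(SM)} \le \frac{2j+n-1}{2j+n-3}\norm{X_+v}^2_{L^2(SM)} = C(n,j)\norm{X_+v}^2_{L^2(SM)}.
\end{equation}
Applying~\eqref{eq:plan-pestov} to $v = u_{k+2}$ at $j = k+2$ and combining with the previous step gives
\begin{equation}
\norm{X_+u_k}^2_{L^2(SM)} \le C(n,k+2)\norm{X_+u_{k+2}}^2_{L^2(SM)}.
\end{equation}
Since $k+2p \ge m$ for every $p \ge 0$, we may iterate this one-step bound $l$ times along $u_{k+2}, u_{k+4}, \ldots, u_{k+2l}$, each of which is handled by lemma~\ref{lma:regularity-u}, and the product of constants that appears is exactly $\prod_{p=1}^{l} C(n,k+2p) = B(n,l,k)$, yielding the claimed estimate.

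The main obstacle is the justification of~\eqref{eq:plan-pestov} in the present $C^{1,1}$ setting: the spherical harmonic projection of the Pestov identity, the integration by parts in $\grad{v}$ and $X$, and the handling of the curvature tensor as an $L^\infty$ symmetric endomorphism of $N$ all require the commutator formulas, Santal\'o-type identity, and density/approximation arguments developed in sections~\ref{sec:estimates-santalo} and~\ref{sec:friedrichs} and in the appendix on differential operators. Once the per-degree inequality~\eqref{eq:plan-pestov} is established and the $L^2$-convergence of the spherical harmonic series of $u$ is available, the algebraic projection~\eqref{eq:plan-proj} and the iteration are immediate.
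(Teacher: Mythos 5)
Your proposal takes essentially the same route as the paper: you project $Xu=-f$ onto spherical harmonic degree $k+1$ to conclude $\norm{X_+u_k}=\norm{X_-u_{k+2}}$ for $k\ge m$ (this is the paper's lemma~\ref{lma:l2-xplus-to-xminus}), then bound $\norm{X_-u_{k+2}}^2\le C(n,k+2)\norm{X_+u_{k+2}}^2$ from the Pestov identity combined with the commutator formulas of lemma~\ref{lma:commutators2} (this is lemma~\ref{lma:estimate-xminus-to-xplus}), and iterate to produce the product $B(n,l,k)$. One small slip: your schematic per-degree identity has the coefficients swapped; the correct form is $(2j+n-1)\norm{X_+v}^2-(2j+n-3)\norm{X_-v}^2=\text{(non-negative)}$, which is what your displayed conclusion $\norm{X_-v}^2\le C(n,j)\norm{X_+v}^2$ actually follows from, so the argument you run is right even though the intermediate display is miswritten.
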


\begin{lemma}[Injectivity of~$X_+$]
\label{lma:injectivity-xplus}
Let~$(M,g)$ be a simple~$C^{1,1}$ manifold with almost everywhere non-positive sectional curvature.
Suppose that $u \in \OS{0,1}{\infty}{k}$ and~$u|_{\partial(SM)} = 0$. Then $X_+u = 0$ implies that~$u = 0$.
\end{lemma}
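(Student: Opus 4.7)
If $k = 0$, then $u$ descends to a function on $M$ and $X_+ u = X u = 0$; by simplicity (axiom~\ref{a2}) every point of $M$ lies on a geodesic reaching $\partial M$, where $u$ vanishes, so $u \equiv 0$.

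For $k \geq 1$ the strategy is to prove the sharp Pestov-type comparison
\begin{equation*}
\norm{X_- u}^2_{L^2(SM)} \le C(n,k)\,\norm{X_+ u}^2_{L^2(SM)}
\end{equation*}
for every $u \in \OS{0,1}{\infty}{k}$ with $u|_{\partial(SM)} = 0$, where $C(n,k) = (2k+n-1)/(2k+n-3)$ is the constant defined before lemma~\ref{lma:l2-estimate}. This is the single-degree analogue of the inequality that feeds the proof of lemma~\ref{lma:l2-estimate} (where it is applied to $u_k^f$ rather than to an arbitrary degree-$k$ function). Granted the comparison, the hypothesis $X_+ u = 0$ forces $X_- u = 0$, hence $Xu = X_+ u + X_- u = 0$. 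Combined with $u|_{\partial(SM)} = 0$ and simplicity, constancy along each geodesic then forces $u \equiv 0$, just as in the $k=0$ case.

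The comparison itself follows from the Pestov identity and the non-positivity of the sectional curvature. In the smooth case one projects the standard identity onto spherical harmonic degree $k$, uses $X = X_+ + X_-$, the eigenvalue equation $\lap{v} u = k(k+n-2) u$, and the resulting bidegree decomposition of $\grad{v} u$, to obtain
\begin{equation*}
(2k+n-3)\norm{X_+ u}^2_{L^2(SM)} - (2k+n-1)\norm{X_- u}^2_{L^2(SM)} = -\iip{R\,\grad{v} u}{\grad{v} u}_{L^2(N)},
\end{equation*}
whose right-hand side is non-negative by the curvature assumption.

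The main obstacle is running this computation with $g \in C^{1,1}$, where covariant derivatives are merely $L^\infty$ and classical integration by parts needs care. The non-smooth Pestov identity of section~\ref{sec:friedrichs}, the mapping properties of $X_\pm$ from proposition~\ref{prop:x-mapping-property}, and vertical Friedrichs mollification are tailored precisely to this: one approximates $u$ while preserving both the degree-$k$ constraint and the zero boundary condition, applies the identity to the mollified functions, and passes to the limit using that $\grad{v} u$, $X_\pm u$ and $R\,\grad{v} u$ are all bona fide $L^2$ objects under the regularity hypothesis $u \in \OS{0,1}{\infty}{k}$.
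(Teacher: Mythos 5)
Your overall strategy coincides with the paper's: show that $X_+u=0$ forces $X_-u=0$ via a Pestov-type estimate, conclude $Xu=0$, and finish using the boundary condition. In the paper this is done by combining lemma~\ref{lma:commutators2} with lemma~\ref{lma:pestov2} to get $(2k+n-3)\norm{X_-u}^2 = \iip{[X,\lap{v}]u}{Xu} \le 0$, and then invoking the Friedrichs inequality of lemma~\ref{lma:friedrichs}.

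The intermediate identity you display is, however, incorrect. The consequence of the Pestov identity (lemma~\ref{lma:pestov}), after using $[X,\grad{v}] = -\grad{h}$ and the commutators in lemma~\ref{lma:commutators2}, is
\begin{equation*}
(2k+n-1)\norm{X_+u}^2_{L^2(SM)} - (2k+n-3)\norm{X_-u}^2_{L^2(SM)}
= \norm{\grad{h}u}^2_{L^2(N)} - \iip{R\,\grad{v}u}{\grad{v}u}_{L^2(N)}.
\end{equation*}
Your version swaps the two coefficients $(2k+n-1)$ and $(2k+n-3)$ and drops the $\norm{\grad{h}u}^2$ term. Fortuitously, both your formula and the correct one give $X_-u=0$ once you set $X_+u=0$ (in your version because $-(2k+n-1)\norm{X_-u}^2 \ge 0$, in the correct one because $-(2k+n-3)\norm{X_-u}^2 \ge 0$ once $k\ge 1$), so the slip does not sink the proof of this lemma. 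But the announced general comparison $\norm{X_-u}^2 \le C(n,k)\norm{X_+u}^2$ does not follow from the formula as you wrote it; with the correct coefficients it does, and this is exactly lemma~\ref{lma:estimate-xminus-to-xplus}.

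Three smaller remarks. First, the case split on $k=0$ is unnecessary: the same inequality works for all $k$, since $X_-u$ vanishes by convention when $k=0$. Second, your pointwise constancy argument is essentially correct, but it needs a Santaló--Fubini step to pass from $Xu=0$ almost everywhere on $SM$ to $Xu=0$ almost everywhere along almost every geodesic; the paper instead uses the Friedrichs inequality of lemma~\ref{lma:friedrichs}, which packages exactly this. Third, the extra mollification layer you propose is not needed here: lemma~\ref{lma:pestov} and lemma~\ref{lma:pestov2} are already stated and proved for functions in your class $\OS{0,1}{\infty}{k}$ with zero boundary values (and the Pestov identity lives in section~\ref{sec:estimates-santalo}, not section~\ref{sec:friedrichs}), so you can apply them directly.
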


\begin{lemma}
\label{lma:e-ae}
Let~$(M,g)$ be a simple~$C^{1,1}$ manifold.
Let~$f \in \Lip(M)$ be a symmetric~$m$-tensor field.
Suppose that~$p$ is a symmetric~$(m-1)$-tensor field and~$u = -\lambda p$ is a Lipschitz function in~$SM$ so that~$Xu = -\lambda f$ everywhere in~$SM$.
Then~$\sigma \nabla p = f$ almost everywhere in~$M$.
\end{lemma}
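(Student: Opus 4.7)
The plan is to identify $X(\lambda p)$ with $\lambda(\sigma\nabla p)$ almost everywhere on $SM$, then to combine the resulting identity with the hypothesis $Xu=-\lambda f$ and read off $\lambda(\sigma\nabla p)=\lambda f$ a.e.\ on $SM$; a fiberwise polarization then gives the desired tensor equality. In smooth geometry this identification is pointwise, but because $g\in C^{1,1}$ and $p$ is Lipschitz on $M$, $\nabla p$ exists only almost everywhere by Rademacher's theorem. The main technical point is to run the chain rule along geodesics in a way that avoids the (merely Lipschitz and possibly discontinuous in derivative) Christoffel symbols; I would do this by working in a $g$-parallel frame along each geodesic, which is $C^1$ in the flow parameter.

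Concretely, fix $(x,v)\in SM$ such that $\nabla p$ exists at $x$, and let $\gamma=\gamma_{(x,v)}$. Pick a $g$-parallel frame $(e_1,\ldots,e_n)$ along $\gamma$; such a frame exists and is $C^1$ in $t$ because the parallel transport equation is an ODE with Lipschitz coefficients. Since $\dot\gamma$ is parallel, its components $v^i$ in the frame are constant in $t$, so the Lipschitz function
\[
h(t):=p_{\gamma(t)}(\dot\gamma(t),\ldots,\dot\gamma(t))=P_{i_1\cdots i_{m-1}}(t)\,v^{i_1}\cdots v^{i_{m-1}},
\]
where $P_{i_1\cdots i_{m-1}}(t):=p_{\gamma(t)}(e_{i_1}(t),\ldots,e_{i_{m-1}}(t))$, is a linear combination with constant coefficients of the scalar Lipschitz functions $P_{i_1\cdots i_{m-1}}$. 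A direct chain rule in local coordinates shows that at any $t$ where $\nabla p$ exists at $\gamma(t)$, the coordinate-derivative terms of $p_{\alpha_1\cdots\alpha_{m-1}}$ combine with the $\Gamma$-terms produced by $\dot e_{i_j}$ to yield the covariant derivative. At $t=0$ this gives
\[
\dot h(0)=(\nabla_{\dot\gamma(0)}p)_x(v,\ldots,v)=(\nabla p)_x(v,v,\ldots,v)=\lambda(\sigma\nabla p)(x,v),
\]
where the symmetrization is trivial since every slot carries the same vector. Since $\dot h(0)=-Xu(x,v)$, we obtain $Xu(x,v)=-\lambda(\sigma\nabla p)(x,v)$ at every such $(x,v)$.

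Let $E\subseteq M$ be the measure-zero set where $\nabla p$ fails to exist; then $\pi^{-1}(E)\subseteq SM$ has measure zero by Fubini. Combined with the hypothesis $Xu=-\lambda f$, the pointwise identity above yields $\lambda(\sigma\nabla p)=\lambda f$ almost everywhere on $SM$. A second Fubini along the fibers of $\pi\colon SM\to M$ shows that for almost every $x\in M$ the equality $\lambda_x(\sigma\nabla p)=\lambda_x f$ holds on a full-measure subset of $S_xM$; both sides are restrictions of symmetric $m$-tensors at $x$ and are therefore polynomial (hence continuous) in $v$, so the equality extends to the whole fiber by continuity. Proposition~\ref{prop:identification} then converts this pointwise identity of homogeneous polynomials into the tensor equality $(\sigma\nabla p)_x=f_x$ at almost every $x\in M$, which is the desired conclusion.
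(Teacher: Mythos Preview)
Your argument is correct and takes a genuinely different route from the paper's. You work pointwise: at every $x$ in the full-measure Rademacher set where $p$ is (totally) differentiable, the chain rule along the geodesic through $(x,v)$ yields $X(\lambda p)(x,v)=\lambda(\sigma\nabla p)(x,v)$, and combining with the hypothesis $Xu=-\lambda f$ gives the fiberwise equality of the two homogeneous polynomials in $v$; polarization then upgrades this to the tensor identity at almost every $x$. The parallel-frame device is harmless but not essential: the same computation goes through directly in local coordinates using $\ddot\gamma^k=-\Gamma^k_{ij}\dot\gamma^i\dot\gamma^j$, and the key analytic input is that the composition of a function differentiable at a point with a $C^1$ curve through that point obeys the ordinary chain rule.

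The paper instead proves the identity in a weak sense: it pairs $\lambda(\sigma\nabla p)-\lambda f$ against arbitrary test functions $\lambda\eta$ with $\eta\in C^1_0(M)$, rewrites the $SM$-integral via Santal\'o's formula as an integral over $\partial_{\mathrm{in}}(SM)$ of integrals along geodesics, and then integrates by parts twice in $t$ (once to pass from $\nabla p$ to $p$, once to pass from $u$ to $Xu$). This avoids any pointwise differentiability discussion for $p$ but imports Santal\'o's formula, which in this low-regularity setting the paper has to justify separately by a smooth-approximation argument for Liouville's theorem. Your approach bypasses Santal\'o entirely and is in that sense more elementary; the paper's approach is more in the spirit of distributional identities and would remain valid even if one only knew $Xu=-\lambda f$ weakly. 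One small remark: at the very end you cite Proposition~\ref{prop:identification}, but what you actually use is the elementary pointwise fact that $\lambda_x$ is injective on symmetric tensors (polarization), not the $L^2$-level statement of that proposition.
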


\begin{proof}[Proof of theorem~\ref{thm:c11-s-injectivity}]
Item~\ref{item:thm1}: Suppose that~$p \in C^{1,1}(M)$ is a symmetric~$(m-1)$-tensor field vanishing on~$\partial(M)$. Then using the fundamental theorem of calculus along each geodesic~$If = I(\sigma\nabla p) = 0$ (see~\cite[Lemma 6.4.2]{PSUGIPETD}), which proves item~\ref{item:thm1}.

Item~\ref{item:thm2}: Suppose that the X-ray transform of a symmetric~$m$-tensor field~$f \in C^{1,1}(M)$ vanishes.
We will prove that there is a symmetric~$(m-1)$-tensor field~$p$ vanishing on~$\partial M$ so that~$f = \sigma\nabla p$.

By boundary determination in lemma~\ref{lma:boundary-determination} there is a symmetric~$(m-1)$-tensor field~$p_0 \in C^{1,1}(M)$ so that~$p_0|_{\partial M} = 0$ and~$f|_{\partial M} = \sigma\nabla p_0|_{\partial M}$.
Let~$\hat f \coloneqq f - \sigma\nabla p_0$.
Then~$\hat f \in \Lip_0(M)$ is a symmetric $m$-tensor field on~$M$ and~$I\hat f = If = 0$.

Let~$u = \sum_{k=0}^\infty u_k$ be the spherical harmonic decomposition of~$u\coloneqq u^{\hat f}$.
Then~$u_k \in \OS{0,1}{\infty}{k}$ by lemma~\ref{lma:regularity-u}. First, we prove that~$u_k = 0$ for all~$k$ for which~$k \equiv m \pmod 2$.

Since for all~$(x,v) \in SM$ it holds that $\hat f(x,-v) = (-1)^m\hat f(x,v)$, we have
\begin{equation}
\begin{split}
u(x,-v)
&=
\int_0^{\tau_+(x,-v)}
\hat f(\gamma_{x,-v}(t),\dot\gamma_{x,-v}(t))
\,\d t
\\
&=
(-1)^m\int_{-\tau_-(x,v)}^0
\hat f(\gamma_{x,v}(t),\dot\gamma_{x,v}(t))
\,\d t.
\end{split}
\end{equation}
Therefore
\begin{equation}
u(x,-v)
+
(-1)^mu(x,v)
=
(-1)^mI\hat f(\phi_{-\tau_-(x,v)}(x,v))
=
0.
\end{equation}
This shows that~$u(x,-v) = (-1)^{m+1}u(x,v)$ for all~$(x,v) \in SM$ and thus~$u_k = 0$ whenever~$k \equiv m \pmod 2$. Next, we will show that~$u_k = 0$ for all~$k \ge m$.

Let~$m_0 \ge m$ and suppose that~$A_1 \coloneqq \norm{X_+u_{m_0}}^2_{L^2(SM)} > 0$.
For all~$l \in \N$ lemma~\ref{lma:l2-estimate} yields the estimate
\begin{equation}
\label{eq:l2-estimate}
B(n,l,m_0)^{-1}
\norm{X_+u_{m_0}}^2_{L^2(SM)}
\le
\norm{X_+u_{m_0 + 2l}}^2_{L^2(SM)}.
\end{equation}
By an elementary estimate (see~\cite[Lemma 13]{IPBRTORO}) there is a constant~$A_2 > 0$ only depending on~$m_0$ and~$n$ so that
\begin{equation}
B(n,l,m_0)^{-1}
\ge
\left(
1 + \frac{4l}{2m_0 + n - 3}
\right)^{-1/2}
\ge
A_2l^{-1/2}.
\end{equation}
Thus the estimate~\eqref{eq:l2-estimate} gives
\begin{equation}
\sum_{l = 1}^\infty
\norm{X_+u_{m_0 + 2l}}^2_{L^2(SM)}
\ge
A_1A_2
\sum_{l = 1}^\infty
l^{-1/2}
=\infty.
\end{equation}
On the other hand $X_+u \in L^2(SM)$ by lemma~\ref{lma:x+uL2}.
Hence orthogonality implies that
\begin{equation}
\sum_{l = 1}^\infty
\norm{X_+u_{m_0 + 2l}}^2_{L^2(SM)}
\le
\sum_{k = 0}^\infty
\norm{X_+u_{k}}^2_{L^2(SM)}
\le
\norm{X_+u}^2_{L^2(SM)}
<
\infty.
\end{equation}
This contradiction proves that~$\norm{X_+u_k}^2_{L^2(SM)} = 0$ for all~$k \ge m$. Since additionally $u_k|_{\partial(SM)} = 0$ for~$k \ge m$, lemma~\ref{lma:injectivity-xplus} says~$u_k = 0$ for all~$k \ge m$.

We have shown~$u_k = 0$ for $k \ge m$ and~$u_k = 0$ for~$k \equiv m \pmod 2$.
Thus~$-u \in \Lip_0(SM)$ is identified with a symmetric~$(m-1)$-tensor field~$p_1 \in \Lip_0(M)$.
As~$u$ solves the transport equation~$Xu = -\hat f$ everywhere on~$SM$ we have~$\sigma\nabla p_1 = \hat f$ almost everywhere on~$M$ by lemma~\ref{lma:e-ae}.
Thus we conclude that~$f = \sigma\nabla p$ almost everywhere in~$M$, where $p \coloneqq p_0 + p_1 \in \Lip(M)$ is a symmetric~$(m-1)$-tensor field with~$p|_{\partial M} = 0$.
\end{proof}

\section{Preliminaries}
\label{sec:preliminaries}

In this article we consider compact and connected smooth manifolds with smooth boundaries.
We assume that such a manifold~$M$ comes equipped with a symmetric and positive definite~$2$-tensor field~$g$ so that its component functions~$g_{jk}$ are~$C^{1,1}$-functions on~$M$.
In this case we refer to~$g$ as a~$C^{1,1}$ \emph{Riemannian metric} and to~$(M,g)$ as a \emph{(non-smooth) Riemannian manifold}.

\subsection{Spaces of tensor fields}
\label{subsec:tensors}

Since~$g$ is a~$C^{1,1}$ Riemannian metric, componentwise differentiability and existence of covariant derivatives are not the same.
Even if the components of a tensor field~$f$ in any local coordinates are~$C^k$ functions for~$k \ge 2$ (which is possible since~$M$ is assumed to have a smooth structure), the covariant derivative~$\nabla f$ falls into~$\Lip(M)$.
Since most of our considerations are related to the metric structure and componentwise differentiability is not compatible with the covariant derivative, the correct definition of a~$C^{1,1}$ tensor field is by covariant differentiability.
However, with covariant differentiability we are restricted to~$C^{1,1}(M)$ and higher regularity does not exist on the Hölder scale.

The space~$L^2(M)$ of~$L^2$-tensor fields of order~$m$ on~$M$ is defined to be the completion of the space of continuous~$m$-tensor fields with respect to the norm induced by the inner product
\begin{equation}
\iip{f}{h}_{L^2(M)}
\coloneqq
\int_M
g^{j_1k_1}\cdots g^{j_mk_m}
f_{j_1\cdots j_m}
h_{k_1\cdots k_m}
\,\d V_g.
\end{equation}
Here~$\d V_g$ is the Riemannian volume form of~$M$.
The space~$H^1(M)$ of~$H^1$-tensor fields of order~$m$ on~$M$ is defined to be the closure of the space of continuously differentiable~$m$-tensor fields with respect to the norm
\begin{equation}
\norm{f}^2_{H^1(M)}
\coloneqq
\norm{f}^2_{L^2(M)}
+
\norm{\nabla f}^2_{L^2(M)}.
\end{equation}

Let~$p \in [1,\infty)$.
The spaces~$L^p(M)$ and~$W^{1,p}(M)$ of~$L^p$- and~$W^{1,p}$-tensor fields of order~$m$ are defined analogously to the spaces~$L^2(M)$ and~$H^1(M)$.

We could give definitions of the spaces~$H^2(M)$ and~$W^{2,p}(M)$ for tensor fields of any order similar to the definitions of spaces~$H^1(M)$ and~$W^{1,p}(M)$.
Again, since~$g$ is only a~$C^{1,1}$ regular Riemannian metric, there are no spaces~$H^3(M)$ and~$W^{3,p}(M)$ compatible with the geometry.
A compatible space should be defined using covariant derivatives in the norms, which would force the spaces~$W^{k,p}(M)$ trivial, when~$k \ge 3$.

If~$f \in C^1(M)$ is a symmetric~$m$-tensor field on~$M$, its symmetrized covariant derivative is~$\sigma\nabla f$.
The symmetrization~$\sigma$ is defined for all~$m$-tensor fields~$h$ on~$M$ by
\begin{equation}
(\sigma h)_{j_1\cdots j_m}
\coloneqq
\frac{1}{m!}
\sum_{\pi}
h_{j_{\pi(1)}\cdots j_{\pi(m)}}
\end{equation}
where the summation is over all permutations~$\pi$ of~$\{1,\dots,m\}$. Note that since $\norm{\sigma \nabla f}_{L^2} \le \norm{\nabla f}_{L^2}$ the symmetrized covariant derivative is bounded between Sobolev spaces.

The trace of a symmetric~$m$-tensor field~$f$ on~$M$ is denoted by~$\trace_g(f)$.
In local coordinates~$\trace_g(f)_{i_1\cdots i_{m-2}}= g^{jk}f_{jki_1\cdots i_{m-2}}$.
A symmetric~$m$-tensor field is called trace-free, if its trace is zero.

\subsection{Vertical and horizontal differentiability}

Let~$M$ be a compact smooth manifold with a smooth boundary and let~$g$ be a~$C^{1,1}$ Riemannian metric on~$M$.
Let~$k \in \N$ and~$\alpha \in [0,1]$ be so that~$k + \alpha \le 2$. For~$l \in \N$ and~$\beta \in [0,1]$ the set~$\C{k,\alpha}{l,\beta}{SM}$ consists of all functions~$u \in C(SM)$ with
\begin{equation}
H_1\cdots H_ku \in C^{0,\alpha}(SM)
\quad\text{and}\quad
V_1\cdots V_lu \in C^{0,\beta}(SM)
\end{equation}
for any~$k$ vector fields $H_1,\dots,H_k \in \overline{\H}$ and any~$l$ vector fields $V_1,\dots,V_l \in \V$.
Additionally, we require that for any~$k+l$ vector fields $Z_1,\dots,Z_{k+l} \in T(SM)$ out of which exactly~$k$ are in~$\overline{\H}$ and exactly~$l$ are in~$\V$ we have
\begin{equation}
Z_1\cdots Z_{k+l}u \in C^{0,\omega}(SM),
\quad\text{where}\quad
\omega \coloneqq \min(\alpha,\beta).
\end{equation}
We let
\begin{equation}
\C{k,\alpha}{\infty}{SM}
\coloneqq
\bigcap_{l \in \N}
\C{k,\alpha}{l,1}{SM}.
\end{equation}

\begin{remark}
In the definition of~$\C{k,\alpha}{l,\beta}{SM}$ the vertical differentiability indices~$l$ and~$\beta$ can surpass the smoothness of charts of~$SM$.
It is not necessary for~$SM$ to have~$C^\infty$ smooth charts, since vertical vector fields operate on a fixed fiber and for a fixed point~$x$ in~$M$ the scaling~$s(x,v) = (x,v\abs{v}^{-1}_g)$ is smooth on~$T_xM \setminus 0$.
The slit tangent space~$T_xM\setminus0$ has a smooth structure even if~$M$ does not.
\end{remark}

\begin{remark}
Any commutator~$[H,V] = HV - VH$, where~$H \in \overline{\H}$ and~$V \in \V$, can be defined classically on the space~$\C{1}{1}{SM}$, since for any $u \in \C{1}{1}{SM}$ the derivatives~$HVu$ and $VHu$ are in~$C(SM)$.
\end{remark}

The set~$\C{k,\alpha}{l,\beta}{N}$ consists of all continuous sections~$W$ of the bundle~$N$ with~$W^j$ in~$\C{k,\alpha}{l,\beta}{SM}$ when~$W = W^j\partial_{x^j}$.
A section~$W$ of the bundle~$N$ is continuous, if it is continuous as a map~$SM \to TM$.

As one might expect, vertical operators preserve horizontal differentiability and horizontal operators preserve vertical differentiability. That is
\begin{align}
X &\colon 
\C{k,\alpha}{l,\beta}{SM}
\to
\C{k-1,\alpha}{l,\beta}{SM},
\\
X &\colon 
\C{k,\alpha}{l,\beta}{N}
\to
\C{k-1,\alpha}{l,\beta}{N},
\\
\grad{v} &\colon
\C{k,\alpha}{l,\beta}{SM}
\to
\C{k,\alpha}{l-1,\beta}{N},
\\
\dive{v} &\colon
\C{k,\alpha}{l,\beta}{N}
\to
\C{k,\alpha}{l-1,\beta}{SM},
\\
\grad{h} &\colon
\C{k,\alpha}{l,\beta}{SM}
\to
\C{k-1,\alpha}{l,\beta}{N}, \quad\text{and}
\\
\dive{h} &\colon
\C{k,\alpha}{l,\beta}{N}
\to
\C{k-1,\alpha}{l,\beta}{SM}.
\end{align}

\subsection{Sobolev spaces of different vertical and horizontal indices}

Standard Sobolev spaces on~$SM$ are defined in section~\ref{sec:sobolev-spaces}. Here, we define Sobolev spaces for scalar functions on~$SM$ of different vertical and horizontal indices.
If~$k,l \in \{0,1\}$ and~$u$ is a scalar function in $\C{k}{l}{SM}$ we define the $\HS{k}{l}{SM}$-norm of~$u$ to be
\begin{equation}
\norm{u}^2_{\HS{k}{l}{SM}}
\coloneqq
\norm{u}^2_{L^2(SM)}
+
k\norm{Xu}^2_{L^2(SM)}
+
k\norm{\grad{h}u}^2_{L^2(N)}
+
l\norm{\grad{v}u}^2_{L^2(N)}.
\end{equation}
The Sobolev space~$\HS{k}{l}{SM}$ for~$k,l \in \{0,1\}$ is defined to be the completion of~$\C{k}{l}{SM}$ with respect to the norm~$\norm{\cdot}_{\HS{k}{l}{SM}}$.

Similarly, we define spaces~$\HS{0}{2}{SM}$ and $\HS{1}{2}{SM}$ to be the completions of~$\C{0}{2}{SM}$ and of~$\C{1}{2}{SM}$ with respect to the norms
\begin{align}
\norm{u}^2_{\HS{0}{2}{SM}}
&\coloneqq
\norm{u}^2_{L^2(SM)}
+
\norm{\lap{v} u}^2_{L^2(SM)},
\quad\text{and}
\\
\norm{u}^2_{\HS{1}{2}{SM}}
&\coloneqq
\norm{u}^2_{\HS{1}{1}{SM}}
+
\norm{u}^2_{\HS{0}{2}{SM}}
\\
&\hspace{5mm}+
\norm{X\lap{v} u}^2_{L^2(SM)}
+
\norm{\lap{v} X u}^2_{L^2(SM)}.
\end{align}
Note that the norm on~$\HS{1}{2}{SM}$ does not cover all possible combinations of a horizontal derivative and two vertical derivatives (e.g.~$\dive{v}X\grad{v}$). This is intentional, since the missing combinations will not be needed.

\begin{proposition}
\label{prop:commutator-formulas}
Let~$M$ be a compact smooth manifold with a smooth boundary and let~$g$ be a~$C^{1,1}$ Riemannian metric on~$M$.
The following commutator formulas hold on~$\HS{1}{2}{SM}$:
\begin{align}
\label{eq:comm-x-gradv}
[X,\grad{v}]
&=
-\grad{h},
\\
\label{eq:comm-div}
\dive{h}\grad{v}
-
\dive{v}\grad{h}
&=
(n-1)X,
\\
\label{eq:comm-laplace-x}
[X,\lap{v}]
&=
2\dive{v}\grad{h}
+
(n-1)X.
\end{align}
The following commutator formula holds on~$\HS{1}{1}{N}$:
\begin{equation}
\label{eq:comm-x-divev}
[X,\dive{v}]
=
-\dive{h}.
\end{equation}
\end{proposition}

\begin{proof}
Formulas~\eqref{eq:comm-x-gradv},~\eqref{eq:comm-div} and~\eqref{eq:comm-laplace-x} on~$\C{1}{2}{SM}$ and~\eqref{eq:comm-x-divev} on~$\C{1}{1}{N}$ can be proved by a computation similar to~\cite[Appendix]{PSUIDBTTT}, since the computations use one horizontal derivative and two vertical for~\eqref{eq:comm-x-gradv},~\eqref{eq:comm-div} and~\eqref{eq:comm-laplace-x} and one horizontal and one vertical derivative for~\eqref{eq:comm-x-divev}. The same formulas hold on~$\HS{1}{2}{SM}$ and~$\HS{1}{1}{N}$ by approximation.
\end{proof}

\subsection{Vertical Fourier analysis}

In this subsection we recall the identification of trace-free symmetric tensor fields and spherical harmonics (the vertical Fourier modes).
We state and prove proposition~\ref{prop:identification} in order to emphasize what changes in these well known results when applied to a case of non-smooth Riemannian metrics.
More details in the case of~$C^\infty$-smooth Riemannian metrics can be found for example in~\cite{PSUGIPETD} and~\cite{DSCKSTFRM}.

\begin{proposition}
\label{prop:identification}
Let~$M$ be a compact smooth manifold with a smooth boundary and let~$g$ be a~$C^{1,1}$ Riemannian metric on~$M$. Let~$k \in \{0,1\}$ and~$\alpha \in [0,1]$. The map~$\lambda \colon f \mapsto \lambda f$ is defines a linear isomorphism from the space of symmetric trace-free~$m$-tensor fields in~$C^{k,\alpha}(M)$ to the space~$\OS{k,\alpha}{\infty}{m}$. There is a constant~$C_{m,n} > 0$ so that for all symmetric trace-free~$m$-tensor fields~$f \in C^0(M)$ we have
\begin{equation}
\label{eq:norm-preserved}
\norm{\lambda f}_{L^2(SM)}
=
C_{m,n}\norm{f}_{L^2(M)}.
\end{equation}
Furthermore, there are positive constants~$c,C > 0$ so that for any two~$m$-tensor fields~$f$ and~$h$ in~$C^0(M)$ we have
\begin{equation}
\label{eq:inner-product-comparable}
c\iip{\lambda f}{\lambda h}_{L^2(SM)}
\le
\iip{f}{h}_{L^2(M)}
\le
C\iip{\lambda f}{\lambda h}_{L^2(SM)}.
\end{equation}
\end{proposition}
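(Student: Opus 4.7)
The plan is to reduce the proposition to a fibrewise, finite-dimensional statement that does not involve any derivatives of $g$, and then to track horizontal and vertical regularity separately. Fixing $x \in M$, I would study the polarization map $\lambda_x \colon f_x \mapsto (v \mapsto f_x(v,\dots,v))$ from symmetric $m$-tensors on $T_xM$ to homogeneous polynomials of degree $m$ on $T_xM$, restricted to the unit sphere $S_xM$. A standard representation-theoretic computation, identical to the one used in the smooth case (see e.g.~\cite{DSCKSTFRM}), shows that the restriction of $\lambda_x$ to trace-free symmetric $m$-tensors is a linear isomorphism onto the spherical harmonics of degree $m$ on $S_xM$, and that $\lap{v}_x \lambda_x f = m(m+n-2)\lambda_x f$ whenever $\trace_g(f) = 0$. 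This step is purely linear-algebraic on the inner product space $(T_xM, g_x)$; it never differentiates $g$ and so is insensitive to the low regularity of the metric.

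Next I would verify the regularity claims. The crucial observation is that in local coordinates $\lambda f(x,v) = f_{j_1\cdots j_m}(x)\, v^{j_1}\cdots v^{j_m}$ is polynomial of degree $m$ in $v$ on every fibre. Hence vertical derivatives of arbitrary order exist and, being again polynomials in $v$ with coefficients that are pointwise tensorial expressions in $f$, retain the horizontal regularity of $f$. Horizontal derivatives of $\lambda f$ are built from covariant derivatives of $f$, which lose one order of regularity, exactly as accounted for in the definition of $\C{k,\alpha}{l,\beta}{SM}$. Together these show that $f \in C^{k,\alpha}(M)$ implies $\lambda f \in \OS{k,\alpha}{\infty}{m}$. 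For surjectivity, polarization is inverted by a polynomial expression: from $u \in \OS{k,\alpha}{\infty}{m}$ one recovers the components of $f_x$ by iterated vertical differentiation followed by a fixed evaluation, operations that preserve the horizontal regularity and produce $f \in C^{k,\alpha}(M)$.

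I would derive the norm identity~\eqref{eq:norm-preserved} from Fubini together with the classical fibrewise computation: for any trace-free symmetric $m$-tensor, $\norm{\lambda_x f}^2_{L^2(S_xM)}$ equals a universal constant $C_{m,n}^2$ times $g_x(f,f)$. For the inner-product equivalence~\eqref{eq:inner-product-comparable}, the point is that on the finite-dimensional fibre of symmetric $m$-tensors at $x$ both bilinear pairings become, after choosing a $g_x$-orthonormal frame, tensorial expressions whose coefficients depend only on $m$ and $n$ and are $O(n)$-invariant. The comparison constants can therefore be chosen uniformly in $x$, and integrating over $M$ yields the stated estimates.

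The main subtlety, and essentially the only one, is ensuring that no step of the argument differentiates $g$ more than once. This is arranged by design: the fibrewise representation theory requires no derivatives of $g$ at all; vertical differentiability is ``free'' because of the polynomial structure in $v$; and horizontal differentiability is controlled exactly by the covariant derivatives of $f$, which are available up to the assumed order $k \le 1$. The uniformity in $x$ needed in~\eqref{eq:inner-product-comparable} similarly requires nothing beyond continuity of $g$, since both bilinear forms become equivalent in an $O(n)$-invariant way after an orthonormal-frame reduction.
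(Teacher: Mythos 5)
Your argument follows the same route as the paper: reduce to the fibrewise isomorphism of \cite{DSCKSTFRM} (which is pure linear algebra and hence insensitive to the regularity of $g$), read off the mapping property from the polynomial dependence $\lambda f(x,v) = f_{j_1\cdots j_m}(x)v^{j_1}\cdots v^{j_m}$, and obtain the $L^2$ statements by integrating the fibrewise identity over $M$. Your treatment of the regularity transfer and of surjectivity via iterated vertical differentiation is more explicit than the paper's one-line remark, and for the comparison~\eqref{eq:inner-product-comparable} you argue by $O(n)$-invariance of the two bilinear forms in an orthonormal frame whereas the paper invokes the decomposition of a symmetric tensor into trace-free pieces; these are minor cosmetic differences, not a different method.
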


\begin{proof}
As in the smooth case~\cite[Lemma 2.5.]{DSCKSTFRM} the map~$\lambda_x$ isomophically maps trace-free~$m$-tensors to spherical harmonics~$S_xM$ of degree~$m$. Since the dependence on~$x$ is of the form~$\lambda f(x,v) = f_{j_1\dots j_m}(x)v^{j_1}\cdots v^{j_m}$, the map~$\lambda$ maps on trace-free $m$-tensor fields in~$C^{k,\alpha}(M)$ into~$\OS{k,\alpha}{\infty}{m}$.

For any symmetric and trace-free $m$-tensor fields~$f,h\in C^0(M)$, a fiberwise calculation \cite[Lemma 2.4.]{DSCKSTFRM} shows that for all~$x \in M$ we have
\begin{equation}
\label{eq:inner-prod-preserved}
\int_{S_xM}
(\lambda_xf)(\lambda_xh)
\,\d S_x
=
C_{m,n}
\ip{f}{h}_{g(x)}
\end{equation}
for some~$C_{m,n} > 0$. Since the computation is fiberwise, it remains valid when~$g \in C^{1,1}$.
Integrating equation~\eqref{eq:inner-prod-preserved} over~$M$ gives
\begin{equation}
\iip{\lambda f}{\lambda h}_{L^2(SM)}
=
C_{m,n}
\iip{f}{h}_{L^2(M)},
\end{equation}
which proves~\eqref{eq:norm-preserved}.
Furthermore, the last claim~\eqref{eq:inner-product-comparable} follows from~\eqref{eq:inner-prod-preserved}, since any symmetric~$m$-tensor field can be decomposed into a sum of symmetric trace-free tensor fields of orders less than or equal to~$m$~\cite{PSUGIPETD}.
\end{proof}

\subsection{Decomposition of the geodesic vector field}

In this subsection we recall the fact that the geodesic vector field maps from spherical harmonic degree~$m$ to spherical harmonic degrees~$m-1$ and~$m+1$.
This mapping property induces a decomposition of~$X$ into operators~$X_+$ and~$X_-$. See~\cite[Section 6.6.]{PSUGIPETD} for details of the decomposition when~$g \in C^\infty$.
We record in proposition~\ref{prop:x-mapping-property} what changes in the decomposition, when the Riemannian metric~$g$ is only~$C^{1,1}$-smooth.

\begin{proposition}
\label{prop:x-mapping-property}
Let~$M$ be a compact smooth manifold with a smooth boundary and let~$g$ be a~$C^{1,1}$ Riemannian metric on~$M$. The geodesic vector field maps
\begin{equation}
X\colon
\OS{1}{\infty}{m}
\to
\OS{0}{\infty}{m-1}
\oplus
\OS{0}{\infty}{m+1}.
\end{equation}
Therefore~$X$ decomposes into operators~$X_+$ and~$X_-$ in each spherical harmonic degree so that
\begin{equation}
X_\pm\colon
\OS{1}{\infty}{m}
\to
\OS{0}{\infty}{m\pm 1}.
\end{equation}
\end{proposition}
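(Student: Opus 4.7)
The plan is to identify elements of $\OS{1}{\infty}{m}$ with trace-free symmetric $m$-tensor fields via Proposition~\ref{prop:identification} and to carry the classical tensor-algebraic argument (cf.~\cite[Section~6.6]{PSUGIPETD}) over to the $C^{1,1}$ setting. By Proposition~\ref{prop:identification} every $u \in \OS{1}{\infty}{m}$ can be written as $u = \lambda f$ for a unique trace-free symmetric $m$-tensor field $f \in C^{1}(M)$. Since $g \in C^{1,1}$ the Christoffel symbols are Lipschitz, so $\nabla f \in C^0(M)$ and $\sigma\nabla f \in C^0(M)$ are well defined. Differentiating $u$ along a geodesic and using $\nabla_{\dot\gamma}\dot\gamma = 0$ yields
\begin{equation*}
Xu(x,v) = (\nabla_k f_{i_1\cdots i_m})(x)\, v^k v^{i_1}\cdots v^{i_m} = \lambda(\sigma\nabla f)(x,v),
\end{equation*}
where symmetrization is automatic under contraction with $v^k v^{i_1}\cdots v^{i_m}$.

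The algebraic heart of the argument is the trace-free decomposition of $\sigma\nabla f$. Metric compatibility $\nabla g = 0$ is a purely algebraic consequence of the Koszul formula, so it holds for any $g \in C^{1,1}$. Combining this with trace-freeness of $f$, a direct index computation gives
\begin{equation*}
\trace_g(\sigma\nabla f) = \tfrac{2}{m+1}\delta f, \qquad \trace_g^2(\sigma\nabla f) = 0,
\end{equation*}
the second identity holding because $\delta f$ is trace-free whenever $f$ is. Hence in the canonical decomposition into trace-free parts all terms beyond the first two vanish:
\begin{equation*}
\sigma\nabla f = T_{m+1} + g \odot T_{m-1},
\end{equation*}
with $T_{m\pm 1}$ trace-free symmetric $(m\pm 1)$-tensor fields. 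Both are continuous on $M$ because they are extracted from $\sigma\nabla f$ and $g$ by pointwise linear-algebraic operations.

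Finally I restrict to $SM$, where $g(v,v) = 1$ kills the $g$-factor and gives $Xu = \lambda T_{m+1} + \lambda T_{m-1}$. Proposition~\ref{prop:identification} applied to the continuous trace-free fields $T_{m\pm 1}$ places $\lambda T_{m\pm 1} \in \OS{0}{\infty}{m\pm 1}$, so defining $X_\pm u := \lambda T_{m\pm 1}$ produces the splitting $X = X_+ + X_-$ with the required mapping properties (with $X_- u = 0$ when $m = 0$, matching the convention $\OS{0}{\infty}{-1} = 0$). Uniqueness of the splitting is immediate from orthogonality of distinct spherical harmonic degrees in $L^2(SM)$. The only point where low regularity demands attention is ensuring that $\nabla g = 0$ and the trace identities retain meaning under $g \in C^{1,1}$, and both reduce to algebraic facts available the moment the Levi-Civita connection exists.
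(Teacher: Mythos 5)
Your argument is correct and proves the stated mapping property, but it is organized differently from the paper's. The paper's proof is carried out fiberwise on~$S_xM$: one writes $Xu(x,v) = v^j\delta_j u(x,v)$, observes that for each fixed~$j$ the function $\delta_j u(x,\cdot)$ is a degree-$m$ spherical harmonic on~$S_xM$ (this is where $\nabla g = 0$ and trace-freeness of~$f$ enter, since $\delta_j u = \lambda(\nabla_j f)$), and invokes the classical fact that a product of spherical harmonics of degrees~$1$ and~$m$ decomposes into degrees~$m-1$ and~$m+1$; the loss of one horizontal derivative is recorded via $X\in\overline{\H}$. You instead pass to the tensor side of Proposition~\ref{prop:identification}, compute $Xu = \lambda(\sigma\nabla f)$, and kill all but the top two terms in the trace-free decomposition of $\sigma\nabla f$ by showing $\trace_g^2(\sigma\nabla f) = 0$ when $f$ is trace-free. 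The two routes rest on the same algebraic core --- the iterated trace of $\sigma\nabla f$ (equivalently, the iterated vertical Laplacian applied to $v^j\delta_j u$) vanishes after two applications --- but your version works on~$M$ while the paper's stays on~$SM$. A welcome by-product of your route is that it exhibits the operators concretely, with $X_+ u$ identified with $\lambda$ of the trace-free part of $\sigma\nabla f$ and $X_- u$ with $\lambda$ of a multiple of its divergence; the paper's version is more compressed and never leaves the phase space. Both arguments make the same low-regularity observations (one horizontal derivative on~$u$ is all that is consumed, and $\nabla g = 0$ is available for $g\in C^{1,1}$), so your handling of regularity is sound.
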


\begin{proof}
Let~$u \in \OS{1}{\infty}{m}$ and pick a point~$x \in M$.
Then~$Xu(x,v) = v^j\delta_ju(x,v)$ for all~$v \in S_xM$, where~$v^j$ is a spherical harmonic of degree~$1$ on~$S_xM$ and~$\delta_ju(x,\cdot)$ is a spherical harmonic of degree~$m$ on~$S_xM$.
Since any product of spherical harmonics of degrees~$1$ and~$m$ is a sum of spherical harmonics of degrees~$m-1$ and~$m+1$ we see that
\begin{equation}
X \colon
\OS{1}{\infty}{m}
\to
\OS{0}{\infty}{m-1}
\oplus
\OS{0}{\infty}{m+1}.
\end{equation}
Here the spherical harmonic components of~$Xu$ have one horizontal derivative less than~$u$ since~$X \in \overline{\H}$.
\end{proof}

\begin{remark}
Since~$X$ maps continuously with respect to the~$H^1$- and~$L^2$-norms the mapping properties from proposition~\ref{prop:x-mapping-property} carry over to the Sobolev space.
In other words
\begin{equation}
\begin{split}
X&\colon
\LS{1}{2}{m}
\to
\LS{0}{2}{m-1}
\oplus
\LS{0}{2}{m+1},
\quad\text{and}
\\
X_\pm&\colon
\LS{1}{2}{m}
\to
\LS{0}{2}{m\pm 1}.
\end{split}
\end{equation}
\end{remark}

As stated above, proposition~\ref{prop:x-mapping-property} gives degreewise defined operators~$X_-$ and~$X_+$ acting on~$\LS{1}{2}{SM}$.
If~$u \in \HS{1}{2}{SM}$ and~$u = \sum_{k=0}^\infty u_k$ is the spherical harmonic decomposition of~$u$, we define
\begin{equation}
\label{eq:xpm}
X_\pm u
=
\sum_{k=0}^\infty
X_\pm u_k.
\end{equation}
We prove in lemma~\ref{lma:xpm-convergence} that the series in~\eqref{eq:xpm} converges (absolutely) in~$L^2(SM)$.

The following lemma~\ref{lma:commutators2} is a low regularity version of~\cite[Lemma 3.3.]{PSUIDBTTT}, the only difference being the regularity of~$u$.

\begin{lemma}
\label{lma:commutators2}
Let~$M$ be a compact smooth manifold with a smooth boundary and let~$g$ be a~$C^{1,1}$ Riemannian metric on~$M$.
If~$u \in \LS{1}{2}{m}$ then
\begin{align}
[X_+,\lap{v}]u
&=
-(2m+n-1)X_+u,
\quad\text{and}
\\
\label{eq:commutators2}
[X_-,\lap{v}]u
&=
(2m+n-3)X_-u.
\end{align}
\end{lemma}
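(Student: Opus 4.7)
The plan is to derive both commutator identities directly from the eigenvalue equation for $u$ combined with the vertical Laplacian eigenvalues of $X_\pm u$. The key input is the mapping property recorded in the remark following proposition~\ref{prop:x-mapping-property}: since $u \in \LS{1}{2}{m}$, both $X_\pm u$ land in $\LS{0}{2}{m\pm 1}$, so $u$ and $X_\pm u$ are simultaneously eigenfunctions of $\lap{v}$ with known eigenvalues, and the commutator collapses to a difference of two scalar multiplications.

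First I would use the hypothesis $\lap{v}u = m(m+n-2)u$ (valid in $L^2(SM)$) and apply $X_\pm$ to both sides. By linearity this gives $X_\pm \lap{v}u = m(m+n-2)X_\pm u$. Next I would invoke the definition of $\LS{0}{2}{m\pm 1}$ to write $\lap{v}X_\pm u = (m\pm 1)(m\pm 1 + n - 2)X_\pm u$; this is meaningful in $L^2(SM)$ precisely because the norm on $\HS{0}{2}{SM}$ controls $\|\lap{v}\cdot\|_{L^2(SM)}$. Subtracting the two identities expresses $[X_\pm,\lap{v}]u$ as $\bigl[m(m+n-2)-(m\pm 1)(m\pm 1+n-2)\bigr]X_\pm u$, and a one-line algebraic simplification of the bracket yields $-(2m+n-1)$ in the $+$ case and $(2m+n-3)$ in the $-$ case, matching the stated formulas.

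The only point requiring care is confirming that every operation is legal in the low-regularity setting: $X_\pm$ must act on $\LS{1}{2}{m}$, and $\lap{v}$ must act on the image $\LS{0}{2}{m\pm 1} \subset \HS{0}{2}{SM}$. Both are built into the function-space framework set up earlier in section~\ref{sec:preliminaries}, so no approximation or density argument is needed. I therefore expect the main obstacle to be merely this bookkeeping; in the smooth setting the whole argument is a one-line computation on eigenfunctions, and the low-regularity version reduces to checking that the same manipulations are well defined on the spaces $\LS{k}{l}{m}$ as given by the definitions.
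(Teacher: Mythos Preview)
Your proposal is correct and follows essentially the same eigenvalue computation as the paper's proof: compute $X_\pm\lap{v}u$ and $\lap{v}X_\pm u$ separately using the known eigenvalues and subtract. The only cosmetic difference is that the paper first reduces by density to $u\in\OS{1}{\infty}{m}$ before doing the computation, whereas you work directly on $\LS{1}{2}{m}$ by invoking the mapping property $X_\pm\colon\LS{1}{2}{m}\to\LS{0}{2}{m\pm1}$ from the remark after proposition~\ref{prop:x-mapping-property}; since that remark is itself obtained by extending proposition~\ref{prop:x-mapping-property} by continuity, the two arguments are equivalent in substance.
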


\begin{proof}
By density it is enough to prove the claimed formulas for~$u \in \OS{1}{\infty}{m}$.
By eigenvalue property of~$u$ and by the mapping property of~$X_+$ we have
\begin{equation}
\label{eq:comm-laplace-xpm1}
X_+\lap{v} u
=
m(m+n-2)X_+u.
\end{equation}
Similarly, by the eigenvalue property of~$X_+u$ we have
\begin{equation}
\label{eq:comm-laplace-xpm2}
\lap{v} X_+u
=
(m+1)((m+1)+n-2)X_+u.
\end{equation}
Subtracting~\eqref{eq:comm-laplace-xpm1} from~\eqref{eq:comm-laplace-xpm2} shows that
\begin{equation}
[X_+,\lap{v}]u
=
-(2m+n-1)X_+u.
\end{equation}
The identity~\eqref{eq:commutators2} can be proved similarly.
\end{proof}

\section{Boundary determination and regularity lemmas}
\label{sec:bnd-determ-regularity}

This section is devoted to the study of the integral function~$u^f$ of a tensor field~$f$ with vanishing X-ray transform.
We prove a vital boundary determination result (lemma~\ref{lma:boundary-determination}) that allows us to prove that~$u^f$ is a Lipschitz function on~$SM$ in subsection~\ref{subsec:regularity-u}.
In subsection~\ref{subsec:reg-of-sp-harmonics} we exploit the particular form of the identification of trace-free tensor fields and spherical harmonics to prove our main regularity lemma~\ref{lma:regularity-u}.

\subsection{Boundary determination}

The boundary determination lemma~\ref{lma:boundary-determination} is proved in two parts.
In lemma~\ref{lma:local-bnd-determination} we give an explicit local construction.
In more detail, we prove that if~$If$ vanishes for some tensor field~$f$, then in local coordinates near any boundary point we construct a tensor field~$p$ so that the symmetrized covariant derivative of~$p$ equals~$f$ when restricted to the boundary.
We prove that lemma~\ref{lma:boundary-determination} follows from the local construction by a partition of unity argument.

\begin{lemma}
\label{lma:local-bnd-determination}
Let~$(M,g)$ be a simple~$C^{1,1}$ manifold and suppose that~$f \in C^{1,1}(M)$ is a symmetric~$m$-tensor field on~$M$ so that in~$If = 0$.
For each~$x \in \partial M$ there is a neighbourhood~$W \subseteq M$ of~$x$ and a symmetric~$(m-1)$-tensor field~$p \in C^{1,1}(W)$ so that~$p|_{W \cap \partial M} = 0$ and~$\sigma\nabla p|_{W \cap \partial M} = f|_{W \cap \partial M}$.
\end{lemma}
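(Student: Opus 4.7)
The plan is to combine a tangential vanishing consequence of $If=0$ at the boundary with a direct coordinate-based construction of $p$, deliberately bypassing boundary normal coordinates.

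\emph{Step 1: Tangential vanishing at $x_0$.} I would first prove that $f_{x_0}(v_1,\dots,v_m)=0$ whenever $v_1,\dots,v_m\in T_{x_0}\partial M$. By polarization of symmetric tensors it suffices to treat $v_1=\dots=v_m=v$ for unit $v\in T_{x_0}\partial M$. Choose a $C^1$ curve $\alpha$ in $\partial M$ with $\alpha(0)=x_0$ and $\dot\alpha(0)=v$. For small $s>0$, assumption~\ref{a2} supplies a unique geodesic $\gamma_s$ in $M$ from $\alpha(-s)$ to $\alpha(s)$; since the two endpoints lie at Riemannian distance $O(s)$ from one another, the length $L_s$ is $O(s)$, and in a local chart the secant identity $L_s\dot\gamma_s(0)=\alpha(s)-\alpha(-s)+o(s)$ combined with continuity of the $C^{1,1}$ geodesic flow gives $\dot\gamma_s(0)\to v$. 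The hypothesis $If(\gamma_s)=0$ together with the mean value theorem produces $t_s\in(0,L_s)$ with $f(\gamma_s(t_s),\dot\gamma_s(t_s),\dots,\dot\gamma_s(t_s))=0$, and passing to the limit by continuity of $f$ yields $f_{x_0}(v,\dots,v)=0$.

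\emph{Step 2: Algebraic construction of $p$.} Fix a smooth chart $(x^1,\dots,x^n)$ on a neighbourhood $W$ of $x_0$ with $\partial M\cap W=\{x^n=0\}$, leaving the metric as an arbitrary $C^{1,1}$ tensor in these coordinates. For any symmetric $(m-1)$-tensor field $p$ vanishing on $\partial M\cap W$, Christoffel contractions in $\nabla p$ and the tangential derivatives $\partial_j p_I$ ($j<n$) both vanish at $x^n=0$, which yields
\begin{equation*}
(\sigma\nabla p)_{i_1\cdots i_m}\big|_{x^n=0}
=\frac{1}{m}\sum_{k=1}^{m}\delta_{i_k,n}\,\partial_n p_{i_1\cdots\widehat{i_k}\cdots i_m}\big|_{x^n=0}.
\end{equation*}
Writing $r(I)$ for the number of entries equal to $n$ in $I=(i_1,\dots,i_m)$ and $J(I)$ for $I$ with one such entry deleted, this becomes $(\sigma\nabla p)_I|_{x^n=0}=\frac{r(I)}{m}\partial_n p_{J(I)}|_{x^n=0}$. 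I then set, for every multi-index $K$ of length $m-1$ with $s(K)$ entries equal to $n$,
\begin{equation*}
q_K(x')\coloneqq \frac{m}{s(K)+1}\,f_{K,n}(x',0),\qquad p_K(x',x^n)\coloneqq x^n\,q_K(x').
\end{equation*}
A direct substitution, using the symmetry of $f$ and the identity $r(I)=s(J(I))+1$, verifies $(\sigma\nabla p)_I|_{x^n=0}=f_I|_{x^n=0}$ when $r(I)\ge 1$; when $r(I)=0$ both sides vanish, the left by construction and the right by Step~1. Together with $p|_{\partial M\cap W}=0$ this yields the claim.

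\emph{Regularity and main obstacle.} Since $f\in C^{1,1}(M)$ has $C^{1,1}$ components in the smooth chart and $g\in C^{1,1}$, the trace $f_{K,n}(\cdot,0)$ is $C^{1,1}$ on the boundary slice, so $q_K$ and hence $p_K(x',x^n)=x^n q_K(x')$ are $C^{1,1}$ in $(x',x^n)$; combined with Lipschitz Christoffel symbols this forces $\nabla p\in\Lip(W)$, i.e.\ $p\in C^{1,1}(W)$ in the covariant sense. The main obstacle is confined to Step~1: one must argue carefully, in the $C^{1,1}$ setting, that the short chord geodesics $\gamma_s$ exist, have length $O(s)$, and possess initial velocities converging to $v$. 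Conditions~\ref{a2}--\ref{a3} and the $C^{1,1}$ geodesic-flow theory of~\cite{IKPIXRTMLR} deliver exactly this; after that Step~2 is purely algebraic.
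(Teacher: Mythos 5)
Your proposal is correct and follows essentially the same two-step strategy as the paper: first show that the purely tangential components of $f$ vanish on the boundary by exploiting $If=0$ along short geodesics near a boundary tangent direction, then define $p$ componentwise in a chart by the explicit formula $p_K(x',x^n)=\frac{m}{s(K)+1}\,x^n f_{K,n}(x',0)$ and verify the identity for $\sigma\nabla p$ at $x^n=0$. Your Step~2 is literally the paper's construction and computation (Appendix~B) in slightly different notation, and your Step~1 replaces the paper's argument (take $v_k\to v$ with $\tau(x,v_k)\to 0$ and observe that the average $\tau^{-1}If(x,v_k)\to f_x(v,\dots,v)$, citing~\cite[Lemma~23]{IKPIXRTMLR}) with the essentially equivalent device of short boundary chords plus the mean value theorem for integrals.
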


\begin{proof}
Let~$x_0 \in \partial M$ be a boundary point. Choose a neighbourhood~$W \subseteq M$ of~$x_0$, where we have~$C^\infty$ coordinates~$\phi \colon W  \to \R^n$ so that
\begin{equation}
\phi(W \cap \partial M)
=
\{x^n = 0\}
\quad\text{and}\quad
\phi(W \cap M^{\sisus{}})
=
\{x^n > 0\}.
\end{equation}
The smooth coordinate function~$\phi$ exists, since~$M$ is a smooth manifold with a smooth boundary.
Denote $\hat x \coloneqq (x^1,\dots,x^{n-1})$ so that~$x = (\hat x,x^n)$.

In these coordinates the required tensor field~$p$ can be defined in the following way. Given~$l \in \{0,\dots,m-1\}$ and $j_1,\dots,j_l \in \{1,\dots,n-1\}$ we let the component of~$p$ corresponding to the indices~$j_1\cdots j_ln \cdots n$ be
\begin{equation}
p_{j_1\cdots j_ln \cdots n}(\hat x,x^n)
\coloneqq
\frac{m}{m-l}
x^nf_{j_1\cdots j_ln \cdots n}(\hat x,0).
\end{equation}
Here the index~$n$ appears~$m-1-l$ times in~$p_{j_1\cdots j_ln \cdots n}$ and~$m-l$ times in~$f_{j_1\cdots j_ln \cdots n}$.
We can insist that~$p$ is symmetric by requiring
\begin{equation}
p_{j_1\cdots j_{m-1}}(\hat x,x^n)
=
p_{j_{\pi(1)}\cdots j_{\pi(m-1)}}(\hat x,x^n),
\end{equation}
where~$\pi$ is any permutation of~$\{1,\dots,m-1\}$ so that $j_{\pi(1)} \le \dots \le j_{\pi(m-1)}$.
This causes no contradictions, since~$f$ is symmetric.
Clearly, it holds that~$p|_{x^n = 0} = 0$ and~$p \in C^{1,1}(M)$ since~$f \in C^{1,1}(M)$.

It remains to show that~$\sigma\nabla p|_{x^n = 0} = f|_{x^n = 0}$, which follows from two claims:
\begin{enumerate}
    \item \label{item:constr-p-1} We prove $f_{j_1\cdots j_m}(\hat x,0) = 0$ in the coordinates in~$W$ when~$j_1,\dots,j_m \in \{1,\dots,n-1\}$.
    
    \item \label{item:constr-p-2} We verify that~$(\sigma\nabla p)_{j_1\dots j_m}|_{x^n = 0} = f_{j_1\dots j_m}|_{x^n = 0}$ in the coordinates in~$W$.
\end{enumerate}
Both claims are proved in appendix~\ref{app:construction-potential}.
The idea is that item~\ref{item:constr-p-1} follows from the fact~$If = 0$, and item~\ref{item:constr-p-2} can then be verified by a straightforward computation in the coordinates in~$W$.
\end{proof}

\begin{proof}[Proof of lemma~\ref{lma:boundary-determination}]
Let~$f \in C^{1,1}(M)$ be a symmetric~$m$-tensor field with~$If = 0$.
We construct a symmetric~$(m-1)$-tensor field~$p \in C^{1,1}(M)$ so that~$p|_{\partial M} = 0$ and~$\sigma\nabla p|_{\partial M} = f|_{\partial M}$.

For each~$x \in \partial M$ pick a neighbourhood~$W_x \subseteq M$ of~$x$ and a symmetric~$(m-1)$-tensor field~$p_x \in C^{1,1}(W_x)$.
Such neighbourhoods~$W_x$ and tensor fields~$p_x$ exist by lemma~\ref{lma:local-bnd-determination}.
Since~$\partial M$ is compact, there is a finite subcover~$\{W_{x_i}\}_{i=1}^k$ of the open cover~$\{W_x\}_{x \in \partial M}$ of~$\partial M$.
Denote $W_i \coloneqq W_{x_i}$ and~$p_i \coloneqq p_{x_i}$.
We add~$W_0 \coloneqq M \setminus \partial M$ to get a finite open cover of~$M$.
Choose a partition of unity~$\{\psi_i\}_{i=1}^n \cup \{\psi_0\}$ subordinate to $\{W_i\}_{i=1}^n \cup \{W_0\}$.
We let the tensor field~$p_0$ corresponding to~$W_0$ to be identically zero.
The products~$\psi_ip_i$ are~$C^{1,1}$ tensor fields in neighbourhoods~$W_i$ and we can extend them by zero outside~$W_i$ to get~$C^{1,1}$ tensor fields on~$M$ since each~$W_i \setminus \supp \psi_i$ is open. 
We define an~$(m-1)$-tensor field~$p$ by
\begin{equation}
p(x) = \sum_{i = 0}^n\psi_i(x)p_i(x).
\end{equation}
Since $\psi_ip_i$ are zero outside~$\supp \psi_i$ and~$p_i|_{\partial M \cap \supp \psi_i} = 0$ by construction, we see that~$p|_{\partial M} = 0$.
The final step is to check that~$\sigma\nabla p = f$ on the boundary~$\partial M$.
By the product rule we have~$\nabla(\psi_ip_i) = \nabla \psi_i \otimes p_i + \psi_i(\nabla p_i)$ for all~$i$.
Since symmetrization commutes with multiplication by a scalar function and $\psi_i$ is a scalar we have
\begin{equation}
\sigma\nabla p
=
\sum_{i = 0}^n
[
\sigma((\nabla\psi_i) \otimes p_i)
+
\psi_i\sigma(\nabla p_i)
].
\end{equation}
Since symmetrization and tensor product commute with pointwise evaluations we have $\sigma((\nabla \psi_i) \otimes p_i)|_{\partial M} = 0$.
Since~$\psi_i = 0$ in~$M \setminus \supp \psi_i$ we have~$\sigma\nabla \psi_i = 0$ in the same open set~$M \setminus \supp \psi_i$.
Together with $p_i = 0$ on $\partial M \cap \supp \psi_i \subseteq \partial M \cap W_i$ vanishing of the covariant derivative $\sigma\nabla \psi_i$ in $M \setminus \supp \psi_i$ implies
\begin{equation}
\begin{split}
\sigma\nabla p|_{\partial M}
&=
\sum_{i = 0}^n
(\psi_i(\sigma\nabla p_i))|_{\partial M}
=
\sum_{i = 0}^n
\psi_i(\sigma\nabla p_i|_{\partial M \cap W_i})
\\
&=
\sum_{i = 0}^n
\psi_i(f|_{\partial M \cap W_i})
=
\sum_{i = 0}^n
(\psi_if)|_{\partial M}
=
f|_{\partial M}.
\end{split}
\end{equation}
Thus~$p$ has the desired properties.
\end{proof}

\subsection{Regularity of the integral function}
\label{subsec:regularity-u}

Let~$(M,g)$ be a simple~$C^{1,1}$ manifold and let~$f\in C^{1,1}(M)$ be a symmetric~$m$-tensor field with~$If = 0$.
Since the main objective is to prove that there is a symmetric~$(m-1)$-tensor field~$p$ on~$M$ so that~$\sigma\nabla p = f$ and by lemma~\ref{lma:boundary-determination} we can find a tensor field~$p \in C^{1,1}(M)$ with this property on the boundary~$\partial M$, we can move to studying tensor fields~$f \in \Lip_0(M)$ vanishing on the boundary.
The following lemma is a special case of~\cite[Lemma 21]{IKPIXRTMLR}.
We record it for the convenience of the reader.

\begin{lemma}
\label{lma:lipschitz-estimate}
Let~$(M,g)$ be a simple~$C^{1,1}$ manifold. Let~$f \in \Lip_0(M)$ be a symmetric~$m$-tensor field on $M$ and let~$u \coloneqq u^f$ be the integral function of~$f$ defined by~\eqref{eq:function-uf}. Then~$u \in \Lip(SM)$.
\end{lemma}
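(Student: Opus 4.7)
My plan is to bound $u(z_2)-u(z_1)$ directly for $z_1,z_2\in SM$ by comparing the two integrals defining $u$. Assuming without loss of generality that $\tau(z_1)\le\tau(z_2)$, I would split
\begin{equation*}
u(z_2)-u(z_1)
=
\int_0^{\tau(z_1)}\bigl[\lambda f(\phi_t z_2)-\lambda f(\phi_t z_1)\bigr]\,\d t
+
\int_{\tau(z_1)}^{\tau(z_2)}\lambda f(\phi_t z_2)\,\d t
\end{equation*}
and estimate the two pieces separately. The first step is to control the common-interval piece. Since $g\in C^{1,1}$, the geodesic flow $\phi_t$ is jointly Lipschitz in $(t,z)$ on compact subsets of its domain of definition (standard ODE theory in the $C^{1,1}$ setting), and $\lambda f$ inherits Lipschitz continuity on $SM$ from $f\in\Lip(M)$ since $|v|_g=1$ on $SM$. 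Combined with the uniform upper bound on $\tau$ coming from compactness of $M$ and assumption~\ref{a3}, this yields a bound of order $d(z_1,z_2)$ for the first integral.

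The second integral is where the real difficulty lies, and it is the main obstacle. The travel time $\tau$ is in general only Hölder-$1/2$ near the glancing region $\partial_0(SM)$, because only $\tau^2$ is assumed Lipschitz (assumption~\ref{a3}). The key observation that saves the day is that $\phi_{\tau(z_2)}z_2\in\partial(SM)$, so $\lambda f$ vanishes at this point by $f\in\Lip_0(M)$. Combined with Lipschitz continuity of $\lambda f$ along the trajectory $t\mapsto\phi_t z_2$, this gives $|\lambda f(\phi_t z_2)|\le C|\tau(z_2)-t|$ for $t\in[\tau(z_1),\tau(z_2)]$, and integrating yields a bound proportional to $(\tau(z_2)-\tau(z_1))^2$ for the second integral.

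The final algebraic step uses the squaring trick
\begin{equation*}
(\tau(z_2)-\tau(z_1))^2
\le (\tau(z_2)+\tau(z_1))\,|\tau(z_2)-\tau(z_1)|
= |\tau(z_2)^2-\tau(z_1)^2|
\le C\,d(z_1,z_2),
\end{equation*}
where the last inequality is assumption~\ref{a3}. This shows that the linear vanishing of $\lambda f$ at the exit time exactly cancels the $1/2$-regularity loss in $\tau$ near $\partial_0(SM)$, and the two pieces together yield the Lipschitz bound. The argument highlights why the hypothesis $f\in\Lip_0(M)$, as opposed to merely $f\in\Lip(M)$, is indispensable here: without the boundary vanishing one would be left with an estimate of order $|\tau(z_2)-\tau(z_1)|\lesssim d(z_1,z_2)^{1/2}$, which only gives Hölder-$1/2$ regularity of $u$.
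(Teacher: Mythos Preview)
Your argument is correct. The paper itself gives no details here: it simply observes that $f\in\Lip_0(M)$ implies $\lambda f\in\Lip_0(SM)$ and then invokes \cite[Lemma~21]{IKPIXRTMLR}, which is the scalar version of the statement. What you have written is essentially a reconstruction of the proof behind that citation. The decisive mechanism you identify --- that the linear vanishing of $\lambda f$ at the exit point upgrades the $(\tau(z_2)-\tau(z_1))$ bound on the tail integral to $(\tau(z_2)-\tau(z_1))^2$, which then pairs with the Lipschitz continuity of $\tau^2$ from assumption~\ref{a3} via the inequality $(\tau(z_2)-\tau(z_1))^2\le|\tau(z_2)^2-\tau(z_1)^2|$ --- is exactly the point of that lemma, and your closing remark on why $f|_{\partial M}=0$ is indispensable is well taken. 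One small remark: to make the Lipschitz dependence of $\phi_t$ on $z$ uniform up to the exit time, one typically embeds $(M,g)$ into a slightly larger open $C^{1,1}$ manifold so that the flow extends past $\tau$; this is standard for simple manifolds and is implicit in your appeal to ``compact subsets of its domain of definition''.
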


\begin{proof}
Since~$f$ is in $\Lip_0(M)$ the corresponding function on the sphere bundle is in~$\Lip_0(SM)$.
It was shown in~\cite[Lemma 21]{IKPIXRTMLR} that the integral function of a function in~$\Lip_0(SM)$ is again a Lipschitz function on~$SM$.
\end{proof}

Next we prove lemma~\ref{lma:e-ae} which states that if a Lipschitz function~$u$ on~$SM$ arising from of tensor field~$-p$ satisfies the transport equation~$Xu = -f$, then~$\sigma \nabla p = f$ holds pointwise almost everywhere.

\begin{proof}[Proof of lemma~\ref{lma:e-ae}]
Let~$f \in \Lip(M)$ is a symmetric~$m$-tensor field.
Suppose that~$p \in \Lip(M)$ is a symmetric~$m$-tensor field so that the Lipschitz function~$u \coloneqq -\lambda p$ solves the transport equation~$Xu = -f$ everywhere in~$SM$.
We prove that~$\sigma\nabla p = f$ almost everywhere on~$SM$ by proving that
\begin{equation}
\iip{\sigma\nabla p - f}{\eta}_{L^2(M)} = 0
\end{equation}
for all symmetric~$m$-tensor fields~$\eta \in C^1_0(M)$. Since by proposition~\ref{prop:identification} there are positive constants~$c,C >0$ so that
\begin{equation}
c\iip{\lambda h_1}{\lambda h_2}_{L^2(SM)}
\le
\iip{h_1}{h_2}_{L^2(M)}
\le
C\iip{\lambda h_1}{\lambda h_2}_{L^2(SM)}
\end{equation}
for all symmetric~$m$-tensor fields~$h_1,h_2 \in \Lip(M)$ it is enough to prove that
\begin{equation}
\label{eq:proof-of-a-ea1}
\iip{\lambda\sigma\nabla p - \lambda f}{\lambda\eta}_{L^2(SM)} = 0.
\end{equation}

Consider a maximal geodesic~$\gamma$ of~$M$ so that~$\gamma(0) = x \in \partial M$ and~$\dot\gamma(0) = v \in \doo{in}(SM)$.
We denote~$z \coloneqq (x,v)$ and write~$\eta \coloneqq \lambda\eta$ and~$f \coloneqq \lambda f$. 
Furthermore, we denote~$\theta(t) \coloneqq \phi_t(z)$ and~$\eta(t) \coloneqq \eta(\theta(t))$. Then we have
\begin{equation}
\label{eq:a-ea1}
\int_0^{\tau(z)}
(\lambda \sigma\nabla p)(\theta(t))
\eta(t)
\,\d t
=
\int_0^{\tau(z)}
(\nabla p)_{\gamma(t)}(\dot\gamma(t),\dots,\dot\gamma(t))
\eta(t)
\,\d t.
\end{equation}
Since~$\gamma$ is a geodesic, it satisfies~$\nabla_{\dot\gamma}\dot\gamma = 0$. Therefore the Leibniz rule implies
\begin{equation}
\label{eq:a-ea2}
\begin{split}
\int_0^{\tau(z)}
(\nabla p)_{\gamma(t)}(\dot\gamma(t),\dots,\dot\gamma(t))
\eta(t)
\,\d t
&=
\int_0^{\tau(z)}
\partial_t(p_{\gamma(t)}(\dot\gamma(t),\dots,\dot\gamma(t)))
\eta(t)
\,\d t
\\
&=
-\int_0^{\tau(z)}
p_{\gamma(t)}(\dot\gamma(t),\dots,\dot\gamma(t))
\partial_t\eta(t)
\,\d t.
\end{split}
\end{equation}
By assumption
$
u(\theta(t))
=
-p_{\gamma(t)}(\dot\gamma(t),\dots,\dot\gamma(t))
$
for all~$t \in [0,\tau(z)]$ and thus
\begin{equation}
\label{eq:a-ea3}
\begin{split}
-\int_0^{\tau(z)}
p_{\gamma(t)}(\dot\gamma(t),\dots,\dot\gamma(t))
\partial_t\eta(t)
\,\d t
&=
\int_0^{\tau(z)}
u(\theta(t))
\partial_t\eta(t)
\,\d t
\\
&=
-\int_0^{\tau(z)}
\partial_tu(\theta(t))
\eta(t)
\,\d t
\\
&=
\int_0^{\tau(z)}
f(\theta(t))
\eta(t)
\,\d t,
\end{split}
\end{equation}
where the last equality holds since~$Xu = -f$ and~$X$ is the infinitesimal generator of the geodesic flow~$\phi_t$.
Together equations~\eqref{eq:a-ea1},~\eqref{eq:a-ea2} and~\eqref{eq:a-ea3} show that
\begin{equation}
\label{eq:proof-of-e-ae2}
\int_0^{\tau(z)}
(\lambda \sigma\nabla p)(\theta(t))\eta(t)
\,\d t
=
\int_0^{\tau(z)}
f(\theta(t))\eta(t)
\,\d t.
\end{equation}
We integrate~\eqref{eq:proof-of-e-ae2} over~$\doo{in}(SM)$ and use Santaló's formula (lemma~\ref{lma:santalo}) to see that
\begin{equation}
\label{eq:proof-of-e-ae3}
\begin{split}
\int_{SM}
(\lambda \sigma\nabla p)\eta
\,\d\Sigma_g
&=
\int_{\doo{in}(SM)}
\int_0^{\tau(z)}
(\lambda \sigma\nabla p)(\theta(t))\eta(t)
\,\d t
\,\mu\d\Sigma_{j^\ast g}
\\
&=
\int_{\doo{in}(SM)}
\int_0^{\tau(z)}
f(\theta(t))\eta(t)
\,\d t
\,\mu\d\Sigma_{j^\ast g}
\\
&=
\int_{SM}
f\eta
\,\d\Sigma_g.
\end{split}
\end{equation}
Equation~\eqref{eq:proof-of-a-ea1} follows immediately from~\eqref{eq:proof-of-e-ae3}, which finishes the proof.
\end{proof}

\subsection{Regularity of the spherical harmonic components}
\label{subsec:reg-of-sp-harmonics}

In this subsection we use the special form of spherical harmonics and the identification of trace-free tensor fields and spherical harmonics to prove lemma~\ref{lma:regularity-u}.
Also, we prove that the degreewise definition of operators~$X_\pm$ acting on functions on~$SM$ is reasonable by proving that series in~\eqref{eq:xpm} converge absolutely in~$L^2(SM)$.

\begin{proof}[Proof of lemma~\ref{lma:regularity-u}]
Let~$f \in \Lip_0(M)$ be a symmetric~$m$-tensor field with vanishing X-ray transform and let~$u \coloneqq u^f$ be the integral function of~$f$ defined by~\eqref{eq:function-uf}.
The integral function~$u$ is in~$\Lip(SM)$ by lemma~\ref{lma:lipschitz-estimate}.
We prove that the spherical harmonic components~$u_k$ of~$u$ are in~$\OS{0,1}{\infty}{k}$ and that~$u_k|_{\partial(SM)} = 0$.

For a fixed~$x \in M$ the fiber~$S_xM$ is isometric to the Euclidean unit sphere~$S^{n-1} \subseteq \R^n$ via the map
\begin{equation}
s_x \colon S_xM \to S^{n-1},
\quad
s_x(v)
=
g(x)^{1/2}v,
\end{equation}
where $g(x)^{1/2}$ is the unique square root of a positive definite matrix $g(x)$.
Since~$u$ is in $\Lip(SM)$, its restriction~$u_x \coloneqq u(x,\,\cdot\,)$ to~$S_xM$ is in~$\Lip(S_xM)$.
Thus the functions~$\tilde u_x$ on~$S^{n-1}$ corresponding to~$u_x$ via~$s_x$ has a decomposition
\begin{equation}
\tilde u_x
=
\sum_{k=0}^\infty
\iip{\tilde u_x}{\phi_k}_{L^2(S^{n-1})}\phi_k,
\end{equation}
where~$\phi_k$ is the eigenfunction of the Laplacian on~$S^{n-1}$ corresponding to the eigenvalue~$k(k+n-2)$.
Tracing back through~$s_x$ we find a~$L^2(S_xM)$ convergent decomposition
\begin{equation}
u_x
=
\sum_{k=0}^\infty
\iip{u_x}{\psi_k}_{L^2(S_xM)}\psi_k,
\end{equation}
where~$\psi_k(v) = \phi_k(s^{-1}_x(v))$.
On the level of the bundle~$SM$, we denote~$\psi_k(x,v) \coloneqq \phi_k(s^{-1}_x(v))$, and thus get the formula~$u_k = \iip{u}{\psi_k}_{L^2(S_xM)}\psi_k$.
Here~$\psi_k$ is in $C^{1,1}(SM)$, since~$\phi_k$ is in $C^\infty(S^{n-1})$ and the map~$(x,v) \mapsto s_x(v)$ is in~$C^{1,1}(SM)$.
This proves that~$u_k \in \Lip(SM)$.
We note that by lemma~\ref{prop:identification} for all~$k$ there is a symmetric and trace-free~$k$-tensor field~$h_k \in \Lip(M)$ so that~$u_k(x,v) = (h_k)_{j_1\cdots j_k}(x)v^{j_1}\cdots v^{j_k}$. This proves that~$u_k \in \OS{0,1}{\infty}{k}$ for all~$k$, since~$u_k$ is polynomial in~$v$.

Finally, we prove that~$u_k|_{\partial(SM)} = 0$.
Since the X-ray transform of~$f$ is zero, the restriction of~$u$ on the boundary~$\partial(SM)$ is zero.
Thus for any~$x \in \partial M$ we have
\begin{equation}
0 = \norm{u(x,\cdot)}^2_{L^2(S_xM)}
=
\sum_{k = 0}^\infty
\norm{u_k(x,\cdot)}^2_{L^2(S_xM)}.
\end{equation}
Therefore, since~$u_k(x,\cdot) \in C^\infty(S_xM)$, we have~$u_k(x,\cdot) = 0$ pointwise on~$S_xM$ for all~$k$, which implies that~$u_k|_{\partial(SM)} = 0$ for all~$k$.
\end{proof}

\begin{lemma}
\label{lma:xpm-convergence}
Let~$(M,g)$ be a simple~$C^{1,1}$ manifold.
Given~$u \in \HS{1}{2}{SM}$, if~$u = \sum_{k=0}^\infty u_k$ is the spherical harmonic decomposition of~$u$, then the series~$\sum_{k=0}^\infty X_\pm u_k$ converge absolutely in~$L^2(SM)$.
Here we use the convention that~$X_-u_0 = 0$.
\end{lemma}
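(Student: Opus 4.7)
The plan is to isolate each pair $(X_+ u_{m-1}, X_- u_{m+1})$ by combining two identities in $L^2(SM)$ and solving a small linear system inside each spherical harmonic degree. First I would observe that, for $u \in \HS{1}{2}{SM}$, both $\phi \coloneqq Xu$ and $\psi \coloneqq \lap{v}Xu - X\lap{v}u$ lie in $L^2(SM)$: the former by the definition of the norm, and the latter because, by the commutator formula~\eqref{eq:comm-laplace-x}, it equals $-2\dive{v}\grad{h}u - (n-1)Xu$, which is in $L^2$ since $\lap{v}Xu$ and $X\lap{v}u$ are. Decomposing these into spherical harmonics as $\phi = \sum_m \phi_m$ and $\psi = \sum_m \psi_m$, Parseval gives $\sum_m \|\phi_m\|_{L^2}^2 = \|\phi\|_{L^2}^2$ and likewise for $\psi$.

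Next, I would apply lemma~\ref{lma:commutators2} to each spherical harmonic component $u_k \in \LS{1}{2}{k}$, together with the eigenvalue property $\lap{v}X_\pm u_k = \mu_{k\pm 1}X_\pm u_k$ where $\mu_j \coloneqq j(j+n-2)$. The two identities $Xu_k = X_+u_k + X_-u_k$ and $\lap{v}Xu_k - X\lap{v}u_k = (2k+n-1)X_+u_k - (2k+n-3)X_-u_k$, after collecting the contributions to each degree $m \ge 1$, yield
\[
\phi_m = X_+u_{m-1} + X_-u_{m+1}, \qquad \psi_m = (2m+n-3)X_+u_{m-1} - (2m+n-1)X_-u_{m+1}.
\]
The determinant of this $2\times 2$ linear system is $-(4m+2n-4) \ne 0$ for $m \ge 1$, and solving it gives
\[
X_+u_{m-1} = \frac{(2m+n-1)\phi_m + \psi_m}{4m+2n-4}, \qquad X_-u_{m+1} = \frac{(2m+n-3)\phi_m - \psi_m}{4m+2n-4}.
\]
The remaining edge cases $X_-u_1 = \phi_0$ and $X_-u_0 = 0$ are handled directly by convention.

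Since $(2m+n-1)/(4m+2n-4) \le 1$, the first explicit formula produces $\|X_+u_{m-1}\|_{L^2} \le \|\phi_m\|_{L^2} + (4m+2n-4)^{-1}\|\psi_m\|_{L^2}$, and an analogous bound with constant $1/2$ in place of $1$ holds for $\|X_-u_{m+1}\|_{L^2}$. Summing over $m$ and applying Cauchy--Schwarz twice yields $\sum_{m \ge 1} \|\phi_m\|_{L^2} \le \bigl(\sum_{m \ge 1} m^{-2}\bigr)^{1/2} \bigl(\sum_{m \ge 1} \mu_m^2\|\phi_m\|_{L^2}^2\bigr)^{1/2} \le C\|\lap{v}Xu\|_{L^2}$, using $\mu_m \ge m$ for $n \ge 2$ and $m \ge 1$, and $\sum_{m \ge 1}(4m+2n-4)^{-1}\|\psi_m\|_{L^2} \le C\|\psi\|_{L^2}$. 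Both upper bounds are finite for $u \in \HS{1}{2}{SM}$, establishing $\sum_k \|X_\pm u_k\|_{L^2} < \infty$, which is the claimed absolute convergence in $L^2(SM)$.

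The main technical hurdle I anticipate is justifying that each spherical harmonic component $u_k = P_k u$ actually lies in $\LS{1}{2}{k}$, so that lemma~\ref{lma:commutators2} and the Sobolev version of proposition~\ref{prop:x-mapping-property} can be invoked degreewise. Since $\HS{1}{2}{SM}$ is defined as the completion of $\C{1}{2}{SM}$ and the fiberwise projection $P_k$ depends on the base point only through the $C^{1,1}$ metric, I expect a density argument -- approximating $u$ by $\tilde u \in \C{1}{2}{SM}$, for which $\tilde u_k$ manifestly belongs to $\LS{1}{2}{k}$, proving the estimates for $\tilde u$, and then passing to the limit -- to suffice.
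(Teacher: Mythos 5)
Your proof is correct and takes a genuinely different route from the paper's. You form the pair $\phi = Xu$ and $\psi = -[X,\lap{v}]u$, project both into each spherical harmonic degree $m$, and solve the resulting $2\times 2$ linear system with nonvanishing determinant $-(4m+2n-4)$ to get explicit formulas for $X_+u_{m-1}$ and $X_-u_{m+1}$; the decaying denominators plus the $\HS{1}{2}$ control of $\lap{v}Xu$ and $\psi$ then give $\ell^1$-summability of $\|X_\pm u_k\|_{L^2}$ by Cauchy--Schwarz. The paper instead (Appendix~\ref{app:lainalasku}) tests $\grad{h}u$ against $\grad{v}w_k$, writes $\grad{h}u$ as a series of vertical gradients of combinations $\frac1k X_+u_{k-1}+\frac{1}{k+n-2}X_-u_{k+1}$ plus a vertically divergence-free remainder $W(u)$, and then expands both $\norm{Xu}^2$ and $\norm{\grad{h}u}^2$ to obtain only $\sum_k\norm{X_+u_{k-1}}^2+\sum_k\norm{X_-u_{k+1}}^2 \le \norm{u}^2_{\HS{1}{0}{SM}}$ --- an $\ell^2$ bound. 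So your argument actually proves something \emph{stronger} than the paper does: genuine absolute ($\ell^1$) convergence, at the cost of using the full $\HS{1}{2}$ norm in the bound (which the hypothesis provides anyway). The paper's word ``absolutely'' is arguably looser than its proof warrants, since it only establishes $\ell^2$-summability, which combined with orthogonality of the components gives unconditional --- not absolute --- convergence; in the downstream remark the paper only uses the $\ell^2$ identity $\norm{X_\pm u}^2=\sum_k\norm{X_\pm u_k}^2$, so this discrepancy is harmless there, but your route is the one that literally delivers the statement as written. The single gap you should close is the one you flag: you need each $u_k \in \LS{1}{2}{k}$ and the termwise validity of the two projected identities; this does follow by the density argument you sketch (approximate by $\tilde u\in\C{1}{2}{SM}$, for which the degreewise commutator formulas of lemma~\ref{lma:commutators2} hold classically, then pass to the $L^2$ limit using continuity of $P_m$, $X_\pm$ on $\LS{1}{2}{k}$, and $\lap{v}X$, $X\lap{v}$ in the $\HS{1}{2}$ norm), and it should be spelled out since it is where the low-regularity setting actually bites.
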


\begin{proof}
We prove convergence of both of series~$\sum_{k=0}^\infty X_\pm u_k$ at once by proving that
\begin{equation}
\label{eq:xpm-convergence}
\sum_{k=0}^\infty
\norm{X_+u_k}^2_{L^2(SM)}
+
\sum_{k=1}^\infty
\norm{X_-u_k}^2_{L^2(SM)}
\le
\norm{u}^2_{\HS{1}{0}{SM}}.
\end{equation}
The proof of~\eqref{eq:xpm-convergence} is identical to the proofs of~\cite[Lemma 4.4]{PSUIDBTTT} and~\cite[Lemma 5.1]{LRSTTCHM}, where the authors proved that
\begin{equation}
\norm{X_+u}^2_{L^2(SM)}
+
\norm{X_-u}^2_{L^2(SM)}
\le
\norm{Xu}^2_{L^2(SM)}
+
\norm{\grad{h}u}_{L^2(SM)}.
\end{equation}
The major difference to the results in~\cite{PSUIDBTTT} and~\cite{LRSTTCHM} is that we work in non-smooth geometry instead of a smooth geometry, so the tools in the proof have changed.
For completeness, we repeat the arguments in appendix~\ref{app:lainalasku} to document the fact that all steps go through in lower regularity with suitably chosen function spaces.
\end{proof}

\begin{remark}
For~$u \in \HS{1}{2}{SM}$ we defined~$X_\pm u$ to be the series~$\sum_{k=0}^\infty X_\pm u_k$, when~$u = \sum_{k=0}^\infty u_k$ is the spherical harmonic decomposition of~$u$. By lemma~\ref{lma:xpm-convergence} both~$X_+u$ and~$X_-u$ are well defined functions in~$L^2(SM)$ and by orthogonality
\begin{equation}
\norm{X_\pm u}^2_{L^2(SM)}
=
\sum_{k=0}^\infty
\norm{X_\pm u_k}^2_{L^2(SM)}.
\end{equation}
\end{remark}

\section{Energy estimates and a Santaló formula}
\label{sec:estimates-santalo}

In this section we show that the~$L^2$-estimate in lemma~\ref{lma:l2-estimate} follows from the Pestov identity, and we establish the Santaló's formula in low regularity in lemma~\ref{lma:santalo}.

\subsection{Pestov energy identity}

Let~$(M,g)$ be a simple~$C^{1,1}$ manifold. Recall that the global index form~$Q$ of~$(M,g)$ is defined by
\begin{equation}
Q(W) \coloneqq \norm{XW}_{L^2(N)} - \iip{RW}{W}_{L^2(N)}    
\end{equation}
for~$W \in H^1_0(N,X)$.

\begin{lemma}[Pestov identity]
\label{lma:pestov}
Let~$(M,g)$ be a simple~$C^{1,1}$ manifold with almost everywhere non-positive sectional curvature.
If~$u \in \OS{0,1}{\infty}{k}$ and~$u|_{\partial(SM)} = 0$, then
\begin{equation}
\label{eq:pestov}
\norm{\grad{v}Xu}^2_{L^2(N)}
=
Q
\left(
\grad{v}u
\right)
+
(n-1)
\norm{Xu}^2_{L^2(SM)}.
\end{equation}
\end{lemma}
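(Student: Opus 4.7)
The plan is to adapt the classical Pestov-identity derivation (as in~\cite{PSUGIPETD}) to the $C^{1,1}$ setting, being careful about which commutators and integrations by parts remain available at the reduced regularity. I would begin with the pointwise identity $\grad{v}Xu = X\grad{v}u + \grad{h}u$ coming from the commutator $[X,\grad{v}]=-\grad{h}$ of Proposition~\ref{prop:commutator-formulas}, and expand the square in $L^2(N)$ to obtain
\begin{equation}
\norm{\grad{v}Xu}^2_{L^2(N)} = \norm{X\grad{v}u}^2_{L^2(N)} + 2\iip{X\grad{v}u}{\grad{h}u}_{L^2(N)} + \norm{\grad{h}u}^2_{L^2(N)}.
\end{equation}
The identity~\eqref{eq:pestov} then reduces to showing that the last two terms add up to $-\iip{R\grad{v}u}{\grad{v}u}_{L^2(N)} + (n-1)\norm{Xu}^2_{L^2(SM)}$.

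Next I would integrate the cross term by parts against~$X$. Since the hypothesis $u|_{\partial(SM)}=0$ forces $\grad{v}u|_{\partial(SM)}=0$, the boundary contributions from $X$-integrations by parts all vanish, and one obtains $\iip{X\grad{v}u}{\grad{h}u}_{L^2(N)} = -\iip{\grad{v}u}{X\grad{h}u}_{L^2(N)}$. To introduce the curvature tensor I would invoke a Jacobi-type commutator of the form $[X,\grad{h}]u = R\grad{v}u$, which is standard in smooth geometry and which must be justified at $C^{1,1}$ regularity with $R$ understood as a bounded operator on sections of~$N$. Substituting $X\grad{h}u = \grad{h}Xu + R\grad{v}u$, then re-expressing $\iip{\grad{v}u}{\grad{h}Xu}_{L^2(N)}$ via the scalar commutator $\dive{h}\grad{v}-\dive{v}\grad{h}=(n-1)X$ from Proposition~\ref{prop:commutator-formulas} (together with $\grad{v}Xu = X\grad{v}u + \grad{h}u$ and vertical integration by parts on the closed fibers) yields
\begin{equation}
2\iip{X\grad{v}u}{\grad{h}u}_{L^2(N)} = -\norm{\grad{h}u}^2_{L^2(N)} - \iip{R\grad{v}u}{\grad{v}u}_{L^2(N)} + (n-1)\norm{Xu}^2_{L^2(SM)},
\end{equation}
and combining this with the previous display gives~\eqref{eq:pestov}.

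Because the hypothesis only provides $u\in\OS{0,1}{\infty}{k}$, which is merely Lipschitz in the horizontal direction, a density step is needed to make these manipulations rigorous. Using Proposition~\ref{prop:identification}, $u$ corresponds to a Lipschitz trace-free symmetric $k$-tensor field $h$ on~$M$; I would mollify $h$ and cut off near $\partial M$ to preserve the zero boundary condition, producing smoother $u^\varepsilon$ on which every step is classical. Each term of the identity is at most quadratic in one horizontal and one vertical derivative of $u^\varepsilon$, and the curvature term is stable because $R\in L^\infty$, so the whole identity passes to the limit $u^\varepsilon\to u$ in the $\HS{1}{2}{SM}$ norm.

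The main obstacle will be rigorously establishing the curvature commutator $[X,\grad{h}]=R\grad{v}$ in the $C^{1,1}$ category — where $X$ and $\grad{h}$ live on the merely $C^{1,1}$ phase space $SM$ and $R$ is only an $L^\infty$ tensor — and checking that the accompanying horizontal integrations by parts produce boundary terms that vanish under the zero boundary hypothesis. A secondary care-point is that the non-positive sectional curvature assumption is not needed for the identity itself; it enters only in later estimates that use~\eqref{eq:pestov}, so it should not appear in the derivation.
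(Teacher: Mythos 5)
Your derivation is mathematically correct and your bookkeeping of the commutator chain is accurate, but the paper does not derive the identity at all: the paper's proof of Lemma~\ref{lma:pestov} consists of a one-line observation that $u\in\OS{0,1}{\infty}{k}$ implies $u\in\Lip_0(SM)$ with $\grad{v}Xu,X\grad{v}u\in L^2(N)$, followed by a citation to \cite[Lemma~9]{IKPIXRTMLR}, where this Pestov identity was already established for exactly this regularity class on simple~$C^{1,1}$ manifolds. What you have written out is, in essence, a reconstruction of the proof that lives in that earlier reference, which is the classical Pestov derivation starting from $\grad{v}Xu = X\grad{v}u + \grad{h}u$, squaring, integrating the cross term by parts, and inserting the curvature commutator. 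The tradeoff is transparency versus economy: the paper's route is shorter and places the heavy $C^{1,1}$ analysis (justifying $[X,\grad{h}]=R\grad{v}$ with $R\in L^\infty$, and the density step) where it was first proved, while yours would make the argument self-contained. Two of your observations are worth keeping in mind when filling the gaps. First, the curvature commutator $[X,\grad{h}]=R\grad{v}$ is genuinely absent from Proposition~\ref{prop:commutator-formulas} of this paper, so you would either need to prove it at $C^{1,1}$ regularity or rephrase the argument so that only the listed commutators (e.g.\ $[X,\lap{v}]$) appear, with $R$ entering through the index form $Q$ via a separate integration by parts. Second, you are right that the non-positive curvature hypothesis is not used in the proof of the identity itself; the paper carries it in the statement only because the lemma is stated under the standing assumptions of the surrounding section, and it first bites in Lemma~\ref{lma:pestov2}.
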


\begin{proof}
Since~$u \in \OS{0,1}{\infty}{k}$, we have~$u \in \Lip_0(SM)$,~$\grad{v}Xu \in L^2(N)$ and~$X\grad{v}u \in L^2(N)$.
It was proved in~\cite[Lemma 9]{IKPIXRTMLR} that the Pestov identity~\eqref{eq:pestov} holds for this class of functions on simple~$C^{1,1}$ manifolds.
\end{proof}

When~$g \in C^\infty$ the estimate in Lemma~\ref{lma:pestov2} was derived in~\cite[Section 6]{IPBRTORO}. We present a proof compatible with low regularity employing the Pestov identity in Lemma~\ref{lma:pestov}.

\begin{lemma}
\label{lma:pestov2}
Let~$(M,g)$ be a simple~$C^{1,1}$ manifold with almost everywhere non-positive sectional curvature.
If~$u \in \OS{0,1}{\infty}{k}$ and~$u|_{\partial(SM)} = 0$, then
\begin{equation}
\iip{Xu}{[X,\lap{v}]u}_{L^2(SM)}
\le
0.
\end{equation}
\end{lemma}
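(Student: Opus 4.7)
The plan is to reduce the claim to the Pestov identity of Lemma~\ref{lma:pestov} by unpacking $[X,\lap{v}]$ via the commutator formula~\eqref{eq:comm-laplace-x} and performing a single vertical integration by parts. Applying~\eqref{eq:comm-laplace-x} gives
\[
\iip{Xu}{[X,\lap{v}]u}_{L^2(SM)} = 2\iip{Xu}{\dive{v}\grad{h}u}_{L^2(SM)} + (n-1)\norm{Xu}^2_{L^2(SM)}.
\]
Since $\dive{v}$ is the formal $L^2$-adjoint of $-\grad{v}$ and the vertical fibers $S_xM$ are closed (so no boundary contribution arises from the fiber integration), the first term on the right equals $-2\iip{\grad{v}Xu}{\grad{h}u}_{L^2(N)}$. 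Using the commutator $[X,\grad{v}]=-\grad{h}$ from Proposition~\ref{prop:commutator-formulas} to rewrite $\grad{v}Xu = X\grad{v}u + \grad{h}u$ yields
\[
\iip{Xu}{[X,\lap{v}]u}_{L^2(SM)} = -2\iip{X\grad{v}u}{\grad{h}u}_{L^2(N)} - 2\norm{\grad{h}u}^2_{L^2(N)} + (n-1)\norm{Xu}^2_{L^2(SM)}.
\]

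The next step is to eliminate the cross term via the Pestov identity. Expanding $\norm{\grad{v}Xu}^2_{L^2(N)} = \norm{X\grad{v}u+\grad{h}u}^2_{L^2(N)}$ and comparing with Lemma~\ref{lma:pestov} produces
\[
2\iip{X\grad{v}u}{\grad{h}u}_{L^2(N)} = -\iip{R\grad{v}u}{\grad{v}u}_{L^2(N)} + (n-1)\norm{Xu}^2_{L^2(SM)} - \norm{\grad{h}u}^2_{L^2(N)}.
\]
Substituting into the previous display causes the $\norm{Xu}^2_{L^2(SM)}$ terms to cancel, leaving
\[
\iip{Xu}{[X,\lap{v}]u}_{L^2(SM)} = \iip{R\grad{v}u}{\grad{v}u}_{L^2(N)} - \norm{\grad{h}u}^2_{L^2(N)}.
\]
Both terms on the right are non-positive: the second one trivially, and the first because $R$ acts fiberwise via $R(x,v)W = R(W,v)v$ and the sectional curvature is almost everywhere non-positive, so at a.e.\ point $\ip{R\grad{v}u}{\grad{v}u}_{g(x)}\le 0$ (note that the section $\grad{v}u$ of $N$ is automatically orthogonal to $v$).

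The main thing to verify is that the manipulations are legal at the regularity available. For $u\in\OS{0,1}{\infty}{k}$ the functions $Xu$, $\grad{h}u$, $\grad{v}u$, and $X\grad{v}u$ all lie in $L^2$, and then $\grad{v}Xu = X\grad{v}u+\grad{h}u$ does as well, so both the vertical integration by parts and the application of Lemma~\ref{lma:pestov} are justified. The boundary condition $u|_{\partial(SM)}=0$ is not used directly in the manipulations above, but it is required for the Pestov identity itself.
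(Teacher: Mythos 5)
Your proof is correct and follows essentially the same route as the paper: both proofs unpack $[X,\lap{v}]$ via the commutator formula, use $[X,\grad{v}]=-\grad{h}$ with a vertical integration by parts, and invoke the Pestov identity of Lemma~\ref{lma:pestov}. The only cosmetic difference is that the paper weakens $Q(\grad{v}u)\ge\norm{X\grad{v}u}^2_{L^2(N)}$ at the outset and then chains inequalities, whereas you keep the curvature term explicit and arrive at the clean exact identity $\iip{Xu}{[X,\lap{v}]u}_{L^2(SM)}=\iip{R\grad{v}u}{\grad{v}u}_{L^2(N)}-\norm{\grad{h}u}^2_{L^2(N)}$ before bounding — a slightly more informative presentation of the same computation.
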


\begin{proof}
Since the sectional curvature of~$(M,g)$ is almost everywhere non-positive,
$Q(W) \ge \norm{XW}^2$ for all~$W \in H^1_0(N,X)$ and we have
\begin{equation}
\label{eq:pestov1}
\norm{\grad{v}Xu}^2_{L^2(N)}
\ge
\norm{X\grad{v}u}^2_{L^2(N)}
+
(n-1)
\norm{Xu}^2_{L^2(SM)}
\end{equation}
by the Pestov identity (lemma~\ref{lma:pestov}). On the other hand, using commutator formulas from proposition~\ref{prop:commutator-formulas} we see that
\begin{equation}
\label{eq:pestov2}
\begin{split}
\norm{X\grad{v}u}^2
&=
\norm{\grad{v}Xu - \grad{h}u}^2
\\
&=
\norm{\grad{v}Xu}^2
-
2\iip{\grad{v}Xu}{\grad{h}u}
+
\norm{\grad{h}u}^2
\\
&=
\norm{\grad{v}Xu}^2
+
\iip{Xu}{2\dive{v}\grad{h}u}
+
\norm{\grad{h}u}^2.
\end{split}
\end{equation}
Combining estimate~\eqref{eq:pestov1} and equation~\eqref{eq:pestov2} and applying the commutator formula~\eqref{eq:comm-laplace-x} we get
\begin{equation}
\begin{split}
0 &\ge
\iip{Xu}{2\dive{v}\grad{h}u}
+
\norm{\grad{h}u}^2
+
(n-1)
\norm{Xu}^2
\\
&\ge
\iip{Xu}{2\dive{v}\grad{h}u + (n-1)Xu}
\\
&=
\iip{Xu}{[X,\lap{v}]u}
\end{split}
\end{equation}
as claimed.
\end{proof}

\begin{lemma}
\label{lma:l2-xplus-to-xminus}
Let~$(M,g)$ be a simple~$C^{1,1}$ manifold with almost everywhere non-positive sectional curvature.
Suppose that~$f \in \Lip_0(M)$ is a symmetric~$m$-tensor field on~$M$ with vanishing X-ray transform~$If$.
Let~$u \coloneqq u^f$ be the integral function of~$f$ defined by~\eqref{eq:function-uf}.
If~$k \ge m$ or~$k \equiv m \pmod 2$, we have
\begin{equation}
\label{eq:l2-xplus-to-xminus}
\norm{X_+u_k}^2_{L^2(SM)}
=
\norm{X_-u_{k+2}}^2_{L^2(SM)}.
\end{equation}
\end{lemma}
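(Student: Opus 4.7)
The plan is to project the transport equation $Xu = -\lambda f$ onto the $(k+1)$-th spherical harmonic degree and exploit the mapping property $X = X_+ + X_-$. Since $u = u^f$ is Lipschitz on $SM$ by lemma~\ref{lma:lipschitz-estimate}, the transport equation holds pointwise almost everywhere, and both $Xu$ and $\lambda f$ belong to $L^\infty(SM) \subset L^2(SM)$, so it is legitimate to compare their fiberwise spherical harmonic components in $L^2(SM)$.

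First, I would record the key parity/degree observation about $\lambda f$. A symmetric $m$-tensor field decomposes into trace-free tensor fields of orders $m, m-2, m-4, \ldots$, and by proposition~\ref{prop:identification} this decomposition corresponds to the spherical harmonic decomposition of $\lambda f$ on $SM$. Hence $(\lambda f)_j$ vanishes whenever $j > m$ or $j \not\equiv m \pmod 2$. Applied to $j = k+1$, the hypothesis of the lemma ($k \ge m$ or $k \equiv m \pmod 2$) is exactly the statement that $(\lambda f)_{k+1} = 0$.

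Next, I would project $Xu = -\lambda f$ onto the spherical harmonic eigenspace of degree $k+1$. By lemma~\ref{lma:regularity-u}, each component $u_j$ lies in $\OS{0,1}{\infty}{j}$, and by proposition~\ref{prop:x-mapping-property} (together with the remark extending the mapping property to Sobolev classes), the image $Xu_j = X_+u_j + X_-u_j$ lives in degrees $j+1$ and $j-1$. Grouping by target spherical harmonic degree, the only terms of $Xu$ contributing to degree $k+1$ come from $X_+u_k$ and $X_-u_{k+2}$, yielding
\begin{equation}
X_+ u_k + X_- u_{k+2} = (Xu)_{k+1} = -(\lambda f)_{k+1} = 0
\end{equation}
as equality in $L^2(SM)$. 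Taking $L^2$ norms gives the claimed identity $\norm{X_+u_k}^2_{L^2(SM)} = \norm{X_-u_{k+2}}^2_{L^2(SM)}$.

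The only genuine technicality is justifying the identification $(Xu)_{k+1} = X_+u_k + X_-u_{k+2}$ in $L^2(SM)$, i.e.\ the fact that term-by-term differentiation of the spherical harmonic expansion of $u$ is compatible with fiberwise projection of $Xu$. Since lemma~\ref{lma:xpm-convergence} is stated for $u \in \HS{1}{2}{SM}$, I would argue directly by testing the equality $Xu + \lambda f = 0$ against an arbitrary smooth fiberwise spherical harmonic $\varphi \in \OS{\infty}{\infty}{k+1}$, integrating using a smooth density on $SM$, and using that $u_j$ is a polynomial of degree $j$ in $v$ (so that $\int_{S_xM} X u_j \cdot \varphi \, dS_x$ picks out only the $X_+u_k$ and $X_-u_{k+2}$ contributions by orthogonality of spherical harmonics of distinct degrees on each fiber). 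This testing argument avoids the need to invoke global Sobolev regularity of $u$ beyond Lipschitz and reduces the verification to a fiberwise computation, which is the expected mild obstacle in the low-regularity setup.
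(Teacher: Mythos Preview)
Your proposal is correct and follows essentially the same approach as the paper: project the transport equation $Xu=-f$ onto spherical harmonic degree $k+1$, observe that $f_{k+1}=0$ under the stated hypothesis, and take $L^2$-norms. The paper's proof is terser and does not spell out the justification of $(Xu)_{k+1}=X_+u_k+X_-u_{k+2}$ that you flag as the one technicality; your testing argument is a reasonable way to fill that in (though note that $\OS{\infty}{\infty}{k+1}$ is not defined in the paper's notation---a test function in $\OS{1}{\infty}{k+1}$ with zero boundary values suffices).
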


\begin{proof}
Since~$f \in \Lip_0(M)$ and the X-ray transform of~$f$ vanishes, we have~$u \in \Lip_0(SM)$ by lemma~\ref{lma:lipschitz-estimate}.
By the fundamental theorem of calculus~$u$ solves~$Xu = -f$ and projecting this transport equation onto spherical harmonic degree~$k+1$ gives
\begin{equation}
-f_{k+1}
=
X_+u_k
+
X_-u_{k+2}
\end{equation}
If~$k \ge m$ or~$k \equiv m \pmod 2$, then~$f_{k+1} = 0$ and the claim~\eqref{eq:l2-xplus-to-xminus} follows by taking~$L^2$-norms.
\end{proof}

Recall that the constants~$C(n,k)$ and~$B(n,l,k)$ in lemma~\ref{lma:l2-estimate} are
\begin{equation}
C(n,k)
\coloneqq
\frac{2k+n-1}{2k+n-3}
\quad\text{and}\quad
B(n,l,k)
\coloneqq
\prod_{p = 1}^lC(n,k+2p).
\end{equation}

\begin{lemma}
\label{lma:estimate-xminus-to-xplus}
Let~$(M,g)$ be a simple~$C^{1,1}$ manifold with almost everywhere non-positive sectional curvature.
Suppose that~$f \in \Lip_0(M)$ is a symmetric~$m$-tensor field with~$If = 0$.
Let~$u \coloneqq u^f$ be integral function of~$f$ defined by~\eqref{eq:function-uf}.
If~$2k+n-3>0$, we have
\begin{equation}
\label{eq:estimate-xminus-to-xplus}
\norm{X_-u_k}^2_{L^2(SM)}
\le
C(n,k)
\norm{X_+u_k}^2_{L^2(SM)},
\end{equation}
where~$u_k$ are the spherical harmonic components of~$u$.
\end{lemma}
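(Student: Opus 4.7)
The plan is to apply the energy estimate from lemma~\ref{lma:pestov2} to the spherical harmonic component $u_k$ and then evaluate $\iip{Xu_k}{[X,\lap{v}]u_k}_{L^2(SM)}$ explicitly using the decomposition $X = X_+ + X_-$.

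First, by lemma~\ref{lma:regularity-u} applied to $f$ and $u = u^f$, every spherical harmonic component satisfies $u_k \in \OS{0,1}{\infty}{k}$ and $u_k|_{\partial(SM)} = 0$. This is exactly the regularity and boundary hypothesis required by lemma~\ref{lma:pestov2}, which therefore gives
\begin{equation}
\iip{Xu_k}{[X,\lap{v}]u_k}_{L^2(SM)} \le 0.
\end{equation}

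Next I expand the commutator. Since $u_k$ has vertical Laplace eigenvalue $k(k+n-2)$ and by proposition~\ref{prop:x-mapping-property} we have $X_\pm u_k \in \OS{0}{\infty}{k\pm 1}$ with vertical eigenvalues $(k\pm 1)(k\pm 1 + n - 2)$, a direct computation (or, equivalently, summing the two commutator identities in lemma~\ref{lma:commutators2}) yields
\begin{equation}
[X,\lap{v}]u_k = -(2k+n-1) X_+ u_k + (2k+n-3) X_- u_k.
\end{equation}
Writing $Xu_k = X_+u_k + X_-u_k$ and using that $X_+u_k$ and $X_-u_k$ lie in distinct spherical harmonic degrees $k+1$ and $k-1$, hence are $L^2(SM)$-orthogonal, the cross terms in the inner product vanish and
\begin{equation}
\iip{Xu_k}{[X,\lap{v}]u_k}_{L^2(SM)} = -(2k+n-1)\norm{X_+u_k}^2_{L^2(SM)} + (2k+n-3)\norm{X_-u_k}^2_{L^2(SM)}.
\end{equation}

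Combining this with the inequality from lemma~\ref{lma:pestov2} and dividing by the positive factor $2k+n-3$ gives the desired bound
\begin{equation}
\norm{X_-u_k}^2_{L^2(SM)} \le \frac{2k+n-1}{2k+n-3}\norm{X_+u_k}^2_{L^2(SM)} = C(n,k)\norm{X_+u_k}^2_{L^2(SM)}.
\end{equation}
There is no serious obstacle here once lemma~\ref{lma:regularity-u} and lemma~\ref{lma:pestov2} are in hand; the only care required in the non-smooth setting is that the spherical harmonic components have the right regularity to feed into the Pestov-based estimate, and that the commutator identities of lemma~\ref{lma:commutators2} are available on $\LS{1}{2}{k}$.
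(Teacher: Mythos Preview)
Your proof is correct and follows essentially the same approach as the paper: invoke lemma~\ref{lma:regularity-u} to get $u_k \in \OS{0,1}{\infty}{k}$ with vanishing boundary values, apply lemma~\ref{lma:pestov2}, expand the commutator via lemma~\ref{lma:commutators2}, and use orthogonality of $X_+u_k$ and $X_-u_k$ to isolate the two norms before dividing by $2k+n-3>0$. The paper organizes the arithmetic slightly differently (starting from $(2k+n-1)\norm{X_+u_k}^2$ and adding the non-positive quantity $\iip{Xu_k}{[X,\lap{v}]u_k}$), but the content is identical.
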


\begin{proof}
Let~$2k+n-3>0$.
Since~$u_k \in \OS{0,1}{\infty}{k}$ by lemma~\ref{lma:regularity-u}, we can use lemma~\ref{lma:pestov2}, which together with commutator formulas in~\ref{lma:commutators2} gives
\begin{equation}
\begin{split}
(2k+n-1)\norm{X_+u_k}^2
&\ge
(2k+n-1)\norm{X_+u_k}^2
+
\iip{Xu_k}{[X,\lap{v}]u_k}
\\
&=
(2k+n-1)\norm{X_+u_k}^2
+
\iip{X_+u_k}{[X_+,\lap{v}]u_k}
\\
&\quad+
\iip{X_-u_k}{[X_-,\lap{v}]u_k}
\\
&=
(2k+n-3)\norm{X_-u_k}^2.
\end{split}
\end{equation}
Dividing by~$2k+n-3>0$ proves the claimed estimate~\eqref{eq:estimate-xminus-to-xplus}.
\end{proof}

\begin{proof}[Proof of lemma~\ref{lma:l2-estimate}]
Let~$f \in \Lip_0(M)$ be a symmetric~$m$-tensor field so that~$If = 0$ and denote by~$u \coloneqq u^f$ its integral function defined by~\eqref{eq:function-uf}. Let~$k \ge m$. By lemma~\ref{lma:regularity-u} we have~$u \in \OS{0,1}{\infty}{k}$ and thus lemmas~\ref{lma:l2-xplus-to-xminus} and~\ref{lma:estimate-xminus-to-xplus} we get
\begin{equation}
\norm{X_+u_k}^2_{L^2(SM)}
=
\norm{X_-u_{k+2}}^2_{L^2(SM)}
\le
C(n,k+2)
\norm{X_+u_{k+2}}^2_{L^2(SM)}.
\end{equation}
Iterating lemmas~\ref{lma:l2-xplus-to-xminus} and~\ref{lma:estimate-xminus-to-xplus} a total of~$l \in \N$ times yields
\begin{equation}
\norm{X_+u_k}^2
\le
\norm{X_+u_{k+2l}}^2
\prod_{p = 1}^lC(n,k+2p)
=
B(n,l,k)
\norm{X_+u_{k+2l}}^2
\end{equation}
as claimed.
\end{proof}

\subsection{Santaló's formula}

The proof of Santaló's formula on a smooth simple manifolds~$(M,g)$ is based on the so called Liouville's theorem and can be found e.g. in~\cite{PSUGIPETD}.
We give a similar proof of the formula on a simple~$C^{1,1}$ manifold based on the following formulation of Liouville's theorem.

\begin{lemma}
\label{lma:liouville}
Let~$(M,g)$ be a simple~$C^{1,1}$ manifold.
Denote by~$L_X$ the Lie derivative into the direction of the geodesic vector field~$X$ on~$SM$. Then for any~$u \in \Lip(SM)$ it holds that
\begin{equation}
\int_{SM}
uL_X(\d\Sigma_g)
=
0.
\end{equation}
\end{lemma}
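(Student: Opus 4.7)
The plan is to prove the stronger pointwise identity $L_X\d\Sigma_g = 0$ (as an $L^\infty$ top-form on $SM$) via the classical coordinate derivation of Liouville's theorem, and then observe that integrating against $u$ gives the stated vanishing. The key point is that the computation in coordinates is purely algebraic and only involves first derivatives of $g$, so $g \in C^{1,1}$ is sufficient.

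First I would work on the ambient phase space $TM|_M$ in the global coordinates provided by simplicity. Let $\Omega := \det(g_{ij})\,dx^1\wedge\cdots\wedge dx^n\wedge dv^1\wedge\cdots\wedge dv^n$ be the symplectic (Liouville) volume form on $TM$ and $G := v^i\partial_{x^i} - \Gamma^i_{jk}v^jv^k\,\partial_{v^i}$ the geodesic spray. A direct computation gives
\[
L_G\Omega = \bigl(v^i\partial_{x^i}(\ln\det g) - 2\Gamma^i_{ij}v^j\bigr)\,\Omega,
\]
and the standard identity $2\Gamma^i_{ij} = g^{kl}\partial_jg_{kl} = \partial_j(\ln\det g)$ forces the two summands to cancel pointwise on $TM$. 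Since $g\in C^{1,1}$ all first derivatives involved are continuous, so the cancellation is classical everywhere and $L_G\Omega = 0$ pointwise on $TM$.

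Next I would descend to $SM = F^{-1}(1)$ with $F(x,v) := g_x(v,v)$. An analogous algebraic computation using the symmetry of $\Gamma_{i,kl}$ in $(k,l)$ and the explicit formula for the Christoffel symbols yields $GF \equiv 0$ on $TM$, and hence $L_G\,dF = d(GF) = 0$. Choose a $(2n-1)$-form $\sigma$ in a neighborhood of $SM$ with $\Omega = \sigma\wedge dF$ and $\sigma|_{SM} = \d\Sigma_g$; its coefficients depend only on $g$ and its first derivatives and are therefore Lipschitz. Combining $L_G\Omega = 0$ and $L_G\,dF = 0$ forces $L_G\sigma\wedge dF = 0$ as an $L^\infty$ form on $TM$, so $L_G\sigma = \eta\wedge dF$ for some form $\eta$. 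Restricting to $SM$, where $dF$ pulls back to zero, yields $L_X\d\Sigma_g = 0$ as an $L^\infty$ form on $SM$. Multiplying by $u\in\Lip(SM)$ and integrating then gives the lemma.

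The hardest part will be confirming that these differential-form manipulations go through in $C^{1,1}$: the forms $\Omega$, $dF$ and $\sigma$ carry Lipschitz coefficients, so $L_G$ and $d$ produce only $L^\infty$ coefficients, and the cancellations above take place at this regularity level. A minor subtlety is that $\sigma$ is only determined modulo forms divisible by $dF$, but this ambiguity disappears upon restriction to $SM$.
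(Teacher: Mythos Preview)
Your approach is genuinely different from the paper's. The paper proves the lemma by smooth approximation: it takes smooth metrics $g_\alpha \to g$ in $W^{1,\infty}$, pulls everything back to a fixed smooth reference sphere bundle $S_hM$, invokes the classical Liouville theorem for each $g_\alpha$, and then checks that all the ingredients of $\int u\,L_X d\Sigma$ converge in $L^1(S_hM)$. Your route---a direct pointwise computation showing $L_G\Omega=0$ on $TM$ and then descending to $SM$---is more conceptual and would in principle yield the stronger statement $L_X d\Sigma_g=0$ as a form, not merely the integrated identity.

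The ambient computation is fine: both $L_G\Omega=0$ and $GF=0$ are algebraic identities in $g_{ij}$ and $\partial_k g_{ij}$ and hold pointwise for $g\in C^1$. The gap is in the descent step. You write $L_G\sigma=\eta\wedge dF$ as an identity of $L^\infty$ forms on $TM$ and then ``restrict to $SM$''. But $L_G\sigma$ has only $L^\infty$ coefficients (it involves derivatives of the Lipschitz coefficients of $\sigma$ and of $G$), and $SM$ has measure zero in $TM$, so $\iota^*(L_G\sigma)$ is not defined. Equivalently, the naturality $\iota^*L_G=L_X\iota^*$ uses $\iota^*d=d\iota^*$, which is standard for $C^1$ forms but fails in general for Lipschitz forms pulled back along a $C^{1,1}$ embedding: the right-hand side $d(\iota^*\alpha)$ is a well-defined $L^\infty$ form on $SM$, while the left-hand side asks you to restrict an $L^\infty$ object to a null set. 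Your remark that ``the cancellations take place at this regularity level'' does not address this, because the problem is not a cancellation but a restriction.

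This gap is likely repairable---for instance, with the specific choice $\sigma=\tfrac{1}{2F}\,i_R\Omega$ (where $R=v^j\partial_{v^j}$) one can compute $L_G\sigma=-\tfrac{1}{2F}\,i_G\Omega$ explicitly, which is Lipschitz rather than merely $L^\infty$, and the restriction then makes sense; but carrying that through still requires verifying that this $\sigma$ really restricts to $d\Sigma_g$ and that the remaining manipulations commute with $\iota^*$. As written, the proposal asserts the descent without supplying the regularity needed to justify it, and that is exactly the difficulty the paper's approximation argument is designed to circumvent.
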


The proof of lemma~\ref{lma:liouville} is based on smooth approximation of the Riemannian metric~$g$ and can be found in Appendix~\ref{sec:liouville}.

If~$\nu$ is the inner unit normal vector field to~$\partial M$, let~$\mu(x,v) \coloneqq \ip{\nu(x)}{v}_{g(x)}$ for all~$(x,v) \in SM$.
If~$\omega$ is a differential~$k$-form on~$SM$, then denote by~$i_X\omega$ the contraction of~$\omega$ with the geodesic vector field~$X$.
That is, for any vector fields~$Y_1,\dots,Y_{k-1}$ on~$SM$, we define $i_X\omega$ by letting
$i_X\omega(Y_1,\dots,Y_{k-1}) = \omega(X,Y_1,\dots,Y_{k-1})$.

\begin{lemma}[Santaló's formula]
\label{lma:santalo}
Let~$(M,g)$ be a simple~$C^{1,1}$ manifold.
For any function~$f \in \Lip_0(SM)$ the integral of~$f$ over~$SM$ with respect to~$\d\Sigma_g$ can be written as
\begin{equation}
\label{eq:santalo}
\int_{SM}
f
\,d\Sigma_g
=
\int_{\doo{in}SM}
\int_0^{\tau(z)}
f(\phi_t(z))
\,\d t
\,\mu(z)
\d\Sigma_{j^{\ast}g}.
\end{equation}
Here~$j \colon \partial(SM) \to SM$ is the inclusion map and~$j^\ast g$ is the Riemannian metric of~$\partial M$ induced by the inclusion~$j$.
\end{lemma}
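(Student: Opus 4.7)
The plan is to reduce Santaló's formula to the weak Liouville theorem (lemma~\ref{lma:liouville}) via the standard integration-by-parts trick applied to the scalar integral function
\begin{equation*}
u(z) \coloneqq \int_0^{\tau(z)} f(\phi_t(z)) \, \d t.
\end{equation*}
Three facts about $u$ will enter. First, $u \in \Lip(SM)$ by the same Lipschitz estimate for integral functions of Lipschitz functions on the sphere bundle that is recalled in the proof of lemma~\ref{lma:lipschitz-estimate}. Second, $Xu = -f$ almost everywhere on $SM$ by the fundamental theorem of calculus along the flow lines. Third, $u|_{\doo{out}(SM)} = 0$ since $\tau$ vanishes on $\doo{out}(SM)$.

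The identity I aim to prove is
\begin{equation*}
\int_{SM} (Xu) \, \d\Sigma_g = -\int_{\partial(SM)} u \mu \, \d\Sigma_{j^\ast g}.
\end{equation*}
Granting this, substituting $Xu = -f$ on the left and using both $u|_{\doo{out}(SM)} = 0$ and the fact that $\partial_0(SM)$ has measure zero isolates the $\doo{in}(SM)$ contribution, and unrolling the definition of $u$ on $\doo{in}(SM)$ produces~\eqref{eq:santalo}. To reach the identity, I would apply Cartan's formula to the top-degree form $u \, \d\Sigma_g$, giving $L_X(u \, \d\Sigma_g) = d(i_X(u \, \d\Sigma_g))$, and then invoke Stokes' theorem on $SM$. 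The boundary contribution is computed from the pointwise identity $i_X(\d\Sigma_g)|_{\partial(SM)} = -\mu \, \d\Sigma_{j^\ast g}$: the horizontal lift of the outward normal $-\nu$ to $\partial M$ is the outward unit normal to $\partial(SM)$ in $SM$ with respect to the Sasaki metric, and the geodesic vector field $X(x,v)$ has Sasaki inner product $\ip{v}{-\nu}_g = -\mu(x,v)$ with it. Finally, the Leibniz rule $L_X(u \, \d\Sigma_g) = (Xu) \, \d\Sigma_g + u L_X(\d\Sigma_g)$ combined with lemma~\ref{lma:liouville} applied to $u \in \Lip(SM)$ kills the extra term.

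The main obstacle will be justifying Cartan's formula and Stokes' theorem in the $C^{1,1}$ setting, since $X$ is only Lipschitz, the form $i_X(u \, \d\Sigma_g)$ is only Lipschitz, and $\tau$ is merely Lipschitz by axiom~\ref{a3}. The cleanest route appears to mirror the proof of lemma~\ref{lma:liouville} in Appendix~\ref{sec:liouville}: approximate $g$ by a sequence of smooth metrics $g_\eps$ converging to $g$ in $C^{1,1}$, prove the identity classically in each smooth geometry with the associated travel time $\tau_\eps$ and integral function $u_\eps$, and pass to the limit using uniform Lipschitz bounds on $u_\eps$, continuity of the Sasaki volume form in the metric, and the Lipschitz regularity of $\tau$. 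The boundary identity $i_X(\d\Sigma_g)|_{\partial(SM)} = -\mu \, \d\Sigma_{j^\ast g}$ is purely pointwise and survives any reasonable limit, so once the approximation is in place the remainder of the argument is the Leibniz rule plus a single appeal to lemma~\ref{lma:liouville}.
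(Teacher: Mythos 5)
Your proof follows the paper's route exactly: apply Cartan's formula and Stokes' theorem to $u^f\,\d\Sigma_g$, identify the boundary term via the pointwise identity for $j^\ast(i_X\d\Sigma_g)$, split $L_X(u\,\d\Sigma_g)$ with the Leibniz rule, and invoke lemma~\ref{lma:liouville} to eliminate the $u\,L_X(\d\Sigma_g)$ term. The one piece you flag as a potential obstacle --- needing a separate smooth approximation to justify Cartan and Stokes --- is not actually required, since both hold directly for Lipschitz forms on the compact $C^{1,1}$ manifold $SM$ (by approximating the form, not the metric); the paper's only metric-approximation argument is the one already encapsulated in the proof of lemma~\ref{lma:liouville}.
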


\begin{proof}
Let~$f \in \Lip_0(SM)$ and consider its integral function~$u \coloneqq u^f$.
The integral function satisfies~$Xu = -f$ and~$u \in \Lip(SM)$ by lemma~\ref{lma:lipschitz-estimate}.
By Cartan's formula we have
\begin{equation}
\label{eq:cartan}
\int_{SM}
L_X(u\,\d\Sigma)
=
\int_{SM}
i_Xd(u\,\d\Sigma)
+
\int_{SM}
d(i_Xu\,\d\Sigma),
\end{equation}
where~$d$ is the exterior derivative.
Since~$u\,\d\Sigma$ is a volume form, the first term on the right in~\eqref{eq:cartan} vanishes.
By Stoke's theorem
\begin{equation}
\int_{SM}
d(i_Xu\,\d\Sigma_g)
=
\int_{\partial(SM)}
j^{\ast}
(ui_X\d\Sigma_g).
\end{equation}
As in the smooth case (\cite[Proposition 3.6.6.]{PSUGIPETD}), we compute that
\begin{equation}
\begin{split}
\int_{\partial(SM)}
j^{\ast}
(ui_X\d\Sigma_g)
&=
\int_{SM}
(j^{\ast}u)(j^{\ast}i_X\d\Sigma_g)
\\
&=
\int_{SM}
(j^\ast u)
\ip{X}{\nu}
\d\Sigma_{j^{\ast}g}
\\
&=
\int_{SM}
(j^\ast u)
\mu
\,\d\Sigma_{j^{\ast}g}.
\end{split}
\end{equation}
Finally, since~$j^\ast u$ is merely a restriction to the boundary, we invoke the definition of~$u$ and lemma~\ref{lma:liouville} to see that
\begin{equation}
\label{eq:proof-of-santalo}
\begin{split}
\int_{SM}
f
\,\d\Sigma_g
&=
\int_{SM}
L_X(u)\,\d\Sigma
\\
&=
\int_{SM}
L_X(u\,\d\Sigma)
-
\int_{SM}
uL_X(\d\Sigma)
\\
&=
\int_{SM}
L_X(u\,\d\Sigma)
\\
&=
\int_{\partial(SM)}
(j^\ast u)
\mu\,\d\Sigma_{j^\ast g}
\\
&=
\int_{\partial(SM)}
\int_0^{\tau(z)}
f(\phi_t(z))
\,\d t
\mu\,\d\Sigma_{j^\ast g}.
\end{split}
\end{equation}
Since~$\tau(z) = 0$ for~$z \notin \doo{in}(SM)$ the claim~\eqref{eq:santalo} follows at once from~\eqref{eq:proof-of-santalo}.
\end{proof}

\section{Friedrich's inequalities}
\label{sec:friedrichs}

In this section we prove that~$L^2$-norms of scalar functions on~$SM$ and sections of the bundle~$N$ are bounded above by constant multiples of~$L^2$-norms of their derivatives along the geodesic flow.
We call these estimates Friedrich's inequalities on~$SM$.
We apply the inequalities to prove lemma~\ref{lma:injectivity-xplus}.

\begin{lemma}
\label{lma:friedrichs}
Let~$(M,g)$ be a simple~$C^{1,1}$ manifold with almost everywhere non-positive sectional curvature. Let~$d$ be the diameter of~$M$. Then
\begin{equation}
\label{eq:friedrichs}
d^2\norm{Xu}^2_{L^2(SM)}
\ge
\norm{u}^2_{L^2(SM)}
\quad\text{and}\quad
d^2\norm{XW}^2_{L^2(N)}
\ge
\norm{W}^2_{L^2(N)}
\end{equation}
for any~$u \in H^1_0(SM)$ and~$W \in H^1_0(N,X)$.
\end{lemma}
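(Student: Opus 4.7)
The plan is to derive both inequalities from the fundamental theorem of calculus along geodesics combined with Santaló's formula (lemma~\ref{lma:santalo}), reducing the vector-valued estimate to the scalar one by parallel transport along the flow.

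First I would prove the scalar inequality. By density of $C^1_0(SM)$ in $H^1_0(SM)$ in the graph norm of $X$, it suffices to take $u \in C^1_0(SM)$. Fix $z \in \doo{in}(SM)$. Since $u$ vanishes on $\partial(SM)$ and in particular $u(z)=0$, the fundamental theorem of calculus gives
\begin{equation}
u(\phi_t(z)) = \int_0^t Xu(\phi_s(z))\,\d s
\end{equation}
for $t \in [0,\tau(z)]$. Cauchy--Schwarz in $s$ yields
\begin{equation}
\abs{u(\phi_t(z))}^2 \le t\int_0^{\tau(z)}\abs{Xu(\phi_s(z))}^2\,\d s.
\end{equation}
Integrating in $t$ over $[0,\tau(z)]$, applying $\tau(z)\le d$, and then invoking Santaló's formula (lemma~\ref{lma:santalo}) gives
\begin{equation}
\norm{u}^2_{L^2(SM)} \le \frac{d^2}{2}\norm{Xu}^2_{L^2(SM)},
\end{equation}
which is stronger than the claimed inequality.

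For the bundle version I would reduce pointwise to the scalar estimate using a $g$-orthonormal parallel frame $\{e_1(t),\dots,e_{n-1}(t)\}$ of $N_{\phi_t(z)}$ along each geodesic $t\mapsto\phi_t(z)$. Such a frame exists and is absolutely continuous in $t$ because the geodesic is $C^{2,1}$ when $g\in C^{1,1}$, and parallel transport preserves $g$-orthogonality with $\dot\gamma$, hence stays in $N$. Writing $W = W^i e_i$ along the geodesic, the definition~\eqref{eq:X-action-field} gives $XW = \dot W^i e_i$, so the componentwise Euclidean norms satisfy $\abs{W}_g^2 = \sum_i (W^i)^2$ and $\abs{XW}_g^2 = \sum_i (\dot W^i)^2$. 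Since $W|_{\doo{in}(SM)}=0$, each $W^i$ vanishes at $t=0$, and repeating the fundamental theorem of calculus and Cauchy--Schwarz step componentwise yields
\begin{equation}
\abs{W(\phi_t(z))}_g^2 \le t\int_0^{\tau(z)}\abs{XW(\phi_s(z))}_g^2\,\d s.
\end{equation}
Integrating and applying Santaló's formula as before gives the second inequality.

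The main technical point to verify is the density used in step one: approximating an $H^1_0$ function (or section) by smooth compactly supported objects in the graph norm of $X$ in the low-regularity setting. For scalars this is built into the definition of $H^1_0(SM)$ as the completion of $C^1_0(SM)$, and the same for $H^1_0(N,X)$ as the completion of $C^1_0(N)$; hence the estimate extends by continuity once it is established on the dense subspace. A minor additional check is that the componentwise argument in a parallel frame is coordinate-independent, which follows since parallel transport is a linear isometry between fibers of $N$ and therefore preserves both $\abs{W}_g$ and $\abs{XW}_g = \abs{D_tW}_g$. The non-positive curvature hypothesis is not actually needed for these inequalities, but we are free to assume it as it comes bundled in the definition of our setting.
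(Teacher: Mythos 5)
Your proposal is correct and takes essentially the same route as the paper's proof: both reduce via Santaló's formula (lemma~\ref{lma:santalo}) to a one-dimensional Poincaré-type inequality along each geodesic and handle the section $W$ by passing to a parallel frame along the flow. The only cosmetic difference is that you derive the one-dimensional inequality directly from the fundamental theorem of calculus and Cauchy--Schwarz (yielding the marginally sharper constant $d^2/2$), whereas the paper cites the standard Friedrichs inequality on $H^1_0([0,\tau(z)])$; and you correctly observe that the non-positive curvature hypothesis plays no role in this particular lemma.
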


\begin{proof}
First, we prove the inequality for functions. By density is enough to consider the case~$u \in C^1_0(SM)$. By Santaló's formula (lemma~\ref{lma:santalo}) we can write
\begin{equation}
\norm{Xu}^2_{L^2(SM)}
=
\int_{\doo{in}(SM)}
\int_0^{\tau(z)}
\abs{Xu(\phi_t(z)}^2
\,\d t
\,\mu\d\Sigma_{j^\ast g},
\end{equation}
where~$j \colon \partial(SM) \to SM$ is the inclusion.
Let us denote~$u_z(t) \coloneqq u(\phi_t(z))$. Then~$u_z \in H^1_0([0,\tau(z)])$ and we have
\begin{equation}
\label{eq:xu-santalo}
Xu(\phi_t(z))
=
\frac{\d}{\d s}
u(\phi_{t+s}(z))
\bigg|_{s=0}
=
\frac{\d}{\d s}
u_z(t+s)
\bigg|_{s=0}
=
\dot{u}_z(t).
\end{equation}
By the usual Friedrich's inequality of~$H^1_0([0,\tau(z)])$ we see that
\begin{equation}
\label{eq:uz-friedrichs}
d^2
\int_0^{\tau(z)}
\abs{\dot{u}_z(t)}^2
\,\d t
\ge
\tau(z)^2
\int_0^{\tau(z)}
\abs{\dot{u}_z(t)}^2
\,\d t
\ge
\int_0^{\tau(z)}
\abs{u_z(t)}^2
\,\d t.
\end{equation}
Combining equation~\eqref{eq:xu-santalo} with inequality~\eqref{eq:uz-friedrichs} we get
\begin{equation}
\begin{split}
d^2\norm{Xu}^2_{L^2(SM)}
&\ge
d^2
\int_{\doo{in}(SM)}
\int_0^{\tau(z)}
\abs{\dot{u}_z(t)}^2
\,\d t
\,\mu\d\Sigma_{j^\ast g}
\\
&\ge
\int_{\doo{in}(SM)}
\int_0^{\tau(z)}
\abs{u_z(t)}^2
\,\d t
\,\mu\d\Sigma_{j^\ast g}
\\
&=
\norm{u}^2_{L^2(SM)},
\end{split}
\end{equation}
which is the claimed inequality for functions.

Next, we prove the inequality for sections of the bundle~$N$.
Let~$W \in H^1_0(N,X)$.
In this case Santaló's formulas (lemma~\ref{lma:santalo}) gives
\begin{equation}
\label{eq:xw-santalo}
\norm{XW}^2_{L^2(SM)}
=
\int_{\doo{in}(SM)}
\int_0^{\tau(z)}
\abs{XW(\phi_t(z))}^2_{g}
\,\d t
\,\mu(z)
\,\d\Sigma_{\partial(SM)}.
\end{equation}
We let~$W_z(t) \coloneqq W(\phi_t(z))$. Then~$W_z(t)$ is a~$H^1_0$ vector field along~$\gamma_z$ and it holds that~$XW(\phi_t(z)) = D_tW_z(t)$.
Choose a parallel frame $(E_1,\dots,E_n)$ along~$\gamma_z$.
Then we have~$D_tW_z = \dot W^i_zE_i$, when~$W_z = W^i_zE_i$.
Since~$W_z$ is a~$H^1_0$ vector field along~$\gamma_z$ we have~$W^i_z \in H^1_0([0,\tau(z)])$ for all~$i$.
Thus we read from equation~\eqref{eq:uz-friedrichs} that
\begin{equation}
\label{eq:wz-friedrich}
d^2
\int_0^{\tau(z)}
\abs{\dot W^i_z}^2
\,\d t
\ge
\int_0^{\tau(z)}
\abs{W^i_z}^2
\,\d t.
\end{equation}
From equations~\eqref{eq:xw-santalo} and~\eqref{eq:wz-friedrich} we see that
\begin{equation}
\begin{split}
d^2
\norm{XW}^2_{L^2(N)}
&=
d^2
\int_{\doo{in}(SM)}
\int_0^{\tau(z)}
\abs{D_tW_z(t)}^2_{g}
\,\d t
\,\mu(z)
\,\d\Sigma_{\partial(SM)}
\\
&=
d^2
\sum_{i = 1}^n
\int_{\doo{in}(SM)}
\int_0^{\tau(z)}
\abs{\dot W_z^i(t)}^2
\,\d t
\,\mu(z)
\,\d\Sigma_{\partial(SM)}
\\
&\ge
\sum_{i = 1}^n
\int_{\doo{in}(SM)}
\int_0^{\tau(z)}
\abs{W_z^i(t)}^2
\,\d t
\,\mu(z)
\,\d\Sigma_{\partial(SM)}
\\
&=
\norm{W}^2_{L^2(N)},
\end{split}
\end{equation}
which is the second claimed inequality.
\end{proof}

\begin{proof}[Proof of lemma~\ref{lma:injectivity-xplus}]
Let~$u \in \OS{0,1}{\infty}{k}$ be so that~$u|_{\partial(SM)} = 0$ and~$X_+u = 0$.
By lemma~\ref{lma:commutators2} we have
\begin{equation}
\label{eq:xp-injective}
\begin{split}
(2k+n-3)\norm{X_-u}^2
&=
-(2k+n-1)\norm{X_+u}^2
+
(2k+n-3)\norm{X_-u}^2
\\
&=
\iip{[X_+,\lap{v}]u}{X_+u}
+
\iip{[X_-,\lap{v}]u}{X_-u}
\\
&=
\iip{[X_+,\lap{v}]u}{Xu}
+
\iip{[X_-,\lap{v}]u}{Xu}
\\
&=
\iip{[X,\lap{v}]u}{Xu}.
\end{split}
\end{equation}
The last inner product in~\eqref{eq:xp-injective} is non-positive by lemma~\ref{lma:pestov2}.
Thus~$X_-u = 0$ almost everywhere on~$SM$.
Let~$d$ be the diameter of~$M$.
Lemma~\ref{lma:friedrichs} then provides
\begin{equation}
\norm{u}^2_{L^2(SM)}
\le
d^2\norm{Xu}^2_{L^2(SM)}
=
d^2
(\norm{X_+u}^2_{L^2(SM)}
+
\norm{X_-u}^2_{L^2(SM)})
=
0.
\end{equation}
Thus~$u = 0$ almost everywhere on~$SM$, but since~$u$ is continuous we have shown that~$u = 0$ everywhere on~$SM$.
\end{proof}


Even though we do not need the result, we next show for completeness that there are no conjugate points in the sense of the global index form~$Q$ when the sectional curvature is non-positive.

\begin{proposition}
Let~$M$ be the closed Euclidean unit ball in~$\R^n$.
Suppose that~$M$ comes equipped with a~$C^{1,1}$ Riemannian metric~$g$ so that the sectional curvature of~$(M,g)$ is almost everywhere non-positive.
Then there is~$\varepsilon > 0$ so that~$Q(W) \ge \varepsilon\norm{W}^2_{L^2(N)}$ for all~$W \in H^1_0(N,X)$.
\end{proposition}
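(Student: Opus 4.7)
The plan is to split the global index form into its two pieces, discard the curvature contribution using the sign hypothesis, and then promote the remaining $\norm{XW}^2_{L^2(N)}$ into a bound on $\norm{W}^2_{L^2(N)}$ via a Friedrich-type inequality.

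First I would analyze the pointwise action of the curvature operator. By definition $R(x,v)W(x,v) = R(W(x,v),v)v$, and since $W(x,v) \in N_{(x,v)}$ is $g$-orthogonal to $v$, the vectors $v$ and $W(x,v)$ are linearly independent whenever $W(x,v)\neq 0$. Non-positivity of the sectional curvature then gives $\ip{R(W,v)v}{W}_{g(x)} \le 0$ for almost every $(x,v)\in SM$, trivially extended where $W$ vanishes. Integrating this pointwise inequality over $SM$ yields $\iip{RW}{W}_{L^2(N)} \le 0$, hence $Q(W) = \norm{XW}^2_{L^2(N)} - \iip{RW}{W}_{L^2(N)} \ge \norm{XW}^2_{L^2(N)}$.

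Next I would apply the second inequality of lemma~\ref{lma:friedrichs}, namely $d^2\norm{XW}^2_{L^2(N)} \ge \norm{W}^2_{L^2(N)}$ for every $W \in H^1_0(N,X)$, where $d$ is the $g$-diameter of $M$. Combining the two bounds gives $Q(W) \ge d^{-2}\norm{W}^2_{L^2(N)}$, so that $\varepsilon \coloneqq d^{-2}$ works.

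The main concern, bordering on circularity, is that lemma~\ref{lma:friedrichs} is stated for simple $C^{1,1}$ manifolds, whereas the present proposition only imposes non-positive curvature on the Euclidean unit ball and the estimate $Q\ge \varepsilon\norm{\cdot}^2$ is precisely assumption~\ref{a1} of simplicity. However, inspecting the proof of lemma~\ref{lma:friedrichs} shows that it relies only on Santaló's formula and the one-dimensional Poincar\'e inequality along each maximal geodesic; assumption~\ref{a1} is never invoked. Since non-positive sectional curvature together with compactness of $M\subset\R^n$ guarantees that maximal geodesics exit the boundary in uniformly bounded time and that the Santaló foliation is well defined, the same argument yields the Friedrich estimate directly in the present setting, so the combined bound closes the proof with $\varepsilon=d^{-2}$.
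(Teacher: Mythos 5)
Your main argument — discard the curvature term using the sign hypothesis, then invoke the Friedrich-type estimate $d^2\norm{XW}^2_{L^2(N)}\ge\norm{W}^2_{L^2(N)}$, and take $\varepsilon=d^{-2}$ — is exactly the paper's proof, word for word in structure. The two displayed reductions match.

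The extra paragraph you add about circularity is, however, where things go astray. The worry you raise is real: Lemma~\ref{lma:friedrichs} is stated under the full simplicity hypothesis, and~\ref{a1} \emph{is} the conclusion being proved, so citing that lemma deserves justification. (The paper in fact just cites it without comment, so you have noticed a genuine point of sloppiness in the proposition's phrasing.) Your first observation — that~\ref{a1} is never used in the proofs of Lemma~\ref{lma:friedrichs}, Santaló's formula or Liouville's theorem, only~\ref{a2} and~\ref{a3} via travel times and the regularity of the integral function — is sound and resolves the circularity provided the proposition is read with~\ref{a2} and~\ref{a3} retained as hypotheses. What does not hold up is your second claim: that ``non-positive sectional curvature together with compactness of $M\subset\R^n$ guarantees that maximal geodesics exit the boundary in uniformly bounded time and that the Santaló foliation is well defined.'' Nothing in the hypotheses as you re-read them rules out trapped geodesics or gives the Lipschitz regularity of~$\tau^2$ that Lemma~\ref{lma:lipschitz-estimate} (and hence Santaló's formula) relies on; non-positive curvature plus compactness does not by itself imply non-trapping or strict boundary convexity in this low-regularity setting, and no proof or reference is offered. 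So the correct resolution is not to discard~\ref{a2} and~\ref{a3}, but to note that the proposition should be read as: under~\ref{a2},~\ref{a3}, and a.e.\ non-positive curvature, the coercivity~\ref{a1} follows — i.e.\ the proposition shows that~\ref{a1} is redundant in the presence of non-positive curvature, not that it follows from curvature alone. With that reading your proof closes; without it, the asserted exit-time bound is an unproved, and likely false, intermediate claim.
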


\begin{proof}
Since the sectional curvature is almost everywhere non-positive,
\begin{equation}
\iip{RW}{W}_{L^2(N)}
=
\int_{(x,v) \in SM}
\ip{R(W(x,v),v)v}{W(x,v)}_g
\,\d\Sigma_g
\le
0
\end{equation}
for all~$W \in H^1_0(N,X)$, since~$W(x,v)$ and~$v$ are always orthogonal.
Thus $Q(W) \ge \norm{XW}^2_{L^2(N)}$ for all~$W \in H^1_0(N,X)$.
Then it follows from lemma~\ref{lma:friedrichs} that for all~$W \in H^1_0(N,X)$ we have
\begin{equation}
Q(W)
\ge
\norm{XW}^2_{L^2(N)}
\ge
\frac{1}{d^2}
\norm{W}^2_{L^2(N)}.
\end{equation}
We take $\eps = 1/d^2$ which finishes the proof.
\end{proof}

\appendix


\section{Completion of the proof of boundary determination}
\label{app:construction-potential}

We complete the details in the proof of lemma~\ref{lma:local-bnd-determination} by proving items~\ref{item:constr-p-1} and~\ref{item:constr-p-2}. Recall that we work in local coordinates~$\phi \colon W \to \R^n$ so that
\begin{equation}
\phi(W \cap \partial M)
=
\{x^n = 0\},
\quad\text{and}\quad
\phi(W \cap M^{\mathrm{int}})
=
\{x^n > 0\}.
\end{equation}
We denote~$\hat x = (x^1,\dots,x^{n-1})$. The local tensor field~$p$ is defined in these coordinates by
\begin{equation}
p_{j_1\cdots j_ln \cdots n}(\hat x, x^n)
=
\frac{m}{m-l}
x^n
f_{j_1\cdots j_ln \cdots n}(\hat x, 0),
\end{equation}
where~$n$ appears~$m-1-l$ times in~$p_{j_1\cdots j_ln \cdots n}$ and~$m-l$ times in~$f_{j_1\cdots j_ln \cdots n}$.

First we prove item~\ref{item:constr-p-1}. We begin by proving that $f_x(v,\dots,v) = 0$ for all $v \in S_x(W \cap \partial M)$ and $x \in W \cap \partial M$. Given $v \in S_x(W \cap \partial M)$ we choose a sequence $(v_k)$ of vectors $v_k \in S_x(W \cap \partial M)$ so that $\tau(x,v_k)>0$, and $\tau(x,v_k) \to 0$ and $v_k \to v$ when $k \to \infty$. Such a sequence of vectors exists by $C^{1,1}$ simplicity as proved in~\cite[Lemma 23]{IKPIXRTMLR}. Since the lengths of the geodesics corresponding to $(x,v_k)$ become arbitrarily short and $If = 0$, we find that
\begin{equation}
\begin{split}
f_x(v,\dots,v)
&=
\lim_{k \to \infty}
\frac{1}{\tau(x,v_k)}
\int_0^{\tau(x,v_k)}
f(\phi_t(x,v_k))
\,\d t
\\
&=
\lim_{k \to \infty}
\frac{If(x,v_k)}{\tau(x,v_k)}
\\
&= 0.
\end{split}
\end{equation}
We have shown that $f_x(v,\dots,v) = 0$ for all $v \in S_x(W \cap \partial M)$. Next, we prove that $f_{j_1 \cdots j_m}(\hat x,0) = 0$ in $W \cap \partial M$ for all $j_1,\dots,j_m \in \{1,\dots,n-1\}$.

Let $\iota \colon \partial M \to M$ be the inclusion map. The pullback $\iota^\ast f$ is an $m$-tensor field on $\partial M$. Since $f_x(v,\dots,v) = 0$ for all $v \in S_x(W \cap \partial M)$ we have $(\iota^\ast f)_x(v,\dots,v) = 0$ for all $v \in S_x(W \cap \partial M)$. Then a fiberwise computation~\cite[Lemma 2.4]{DSCKSTFRM} shows that
\begin{equation}
0
=
\int_{W \cap \partial M}
(\iota^\ast f)_x(v,\dots,v)^2
\,\d S_x
=
C_{m,n-1}\abs{\iota^\ast f}^2_{g(x)}
\end{equation}
for all $x \in W \cap \partial M$. We have shown that $\iota^\ast f|_{W \cap \partial M} = 0$ which written in the coordinates in $W$ gives $f_{j_1\cdots j_m}(\hat x,0) = 0$ for all $j_1,\dots,j_m \in \{1,\dots,n-1\}$. We have proved item~\ref{item:constr-p-1}.

We proceed to proving item~\ref{item:constr-p-2}.
Let~$l \in \{0,\dots,m-1\}$ and~$j_1,\dots,j_l \in \{1,\dots,n-1\}$.
To compute the restriction to boundary of the component functions of~$\sigma\nabla p$, we first compute~$\nabla_np_{j_1\cdots j_ln \cdots n}(\hat x,0)$ and~$\nabla_{j_s}p_{j_1\cdots \widehat{j_s}\cdots j_ln \cdots n}(\hat x,0)$.
We have
\begin{equation}
\label{eq:bnd-det1}
\begin{split}
\nabla_np_{j_1\cdots j_ln \cdots n}
&=
\partial_np_{j_1\cdots j_ln \cdots n}
\\
&\quad
-
\sum_{s=1}^l\Gamma^k_{nj_s}
p_{j_1\cdots k\cdots j_ln \cdots n}
\\
&\quad
-
\sum_{s=l+1}^{m-1}\Gamma^k_{nn}
p_{j_1\cdots j_ln\cdots k \cdots n}.
\end{split}
\end{equation}
Thus by the construction of~$p$ we find that
\begin{equation}
\label{eq:bdn-det2}
\begin{split}
\nabla_np_{j_1\cdots j_ln \cdots n}(\hat x,x^n)
&=
\frac{m}{m-l}
f_{j_1\cdots j_ln \cdots nn}(\hat x,0)
\\
&\quad-\frac{m}{m-l}x^n
\sum_{s=1}^l\Gamma^k_{nj_s}
f_{j_1\cdots k\cdots j_ln \cdots nn}(\hat x,0)
\\
&\quad-\frac{m}{m-l}x^n
\sum_{s=l+1}^{m-1}\Gamma^k_{nn}
f_{j_1\cdots j_ln\cdots k \cdots nn}(\hat x,0).
\end{split}
\end{equation}
On the boundary~$\{x^n = 0\}$ equation~\eqref{eq:bdn-det2} reduces to
\begin{equation}
\label{eq:bnd-det5}
\nabla_np_{j_1\cdots j_ln \cdots n}(\hat x,0)
=
\frac{m}{m-l}
f_{j_1\cdots j_ln \cdots nn}(\hat x,0).
\end{equation}
As in equation~\eqref{eq:bnd-det1} we have
\begin{equation}
\label{eq:bnd-det3}
\begin{split}
\nabla_{j_s}p_{j_1\cdots \widehat{j_s} \cdots j_ln \cdots n}
&=
\partial_{j_s}p_{j_1\cdots \widehat{j_s} \cdots j_ln \cdots n}
\\
&\quad-
\sum_{r=1}^{l-1}\Gamma^k_{j_sj_r}
p_{j_1\cdots k\cdots j_ln \cdots n}
\\
&\quad-
\sum_{r=l}^{m-1}\Gamma^k_{j_sj_r}
p_{j_1\cdots j_ln\cdots k \cdots n}.
\end{split}
\end{equation}
By the construction of~$p$, equation~\eqref{eq:bnd-det3} gives
\begin{equation}
\label{eq:bnd-det7}
\begin{split}
\nabla_{j_s}p_{j_1\cdots \widehat{j_s} \cdots j_ln \cdots n}(\hat x,x^n)
&=
\frac{m}{m-l}x^n
\partial_{j_s}f_{j_1\cdots \widehat{j_s} \cdots j_ln \cdots nn}(\hat x,0)
\\
&\quad
-
\frac{m}{m-l}x^n
\sum_{r=1}^{l-1}\Gamma^k_{j_sj_r}
f_{j_1\cdots k\cdots j_ln \cdots nn}(\hat x,0)
\\
&\quad
-
\frac{m}{m-l}x^n
\sum_{r=l}^{m-1}\Gamma^k_{j_sn}
f_{j_1\cdots j_ln\cdots k \cdots nn}(\hat x,0).
\end{split}
\end{equation}
Therefore on the boundary~$\{x^n = 0\}$ we get
\begin{equation}
\label{eq:bnd-det6}
\nabla_{j_s}p_{j_1\cdots \widehat{j_s} \cdots j_ln \cdots n}(\hat x,0) = 0.
\end{equation}
Now we are ready to compute~$(\sigma\nabla p)_{j_1\dots j_ln \cdots n}$, when~$l \in \{0,\dots,m-1\}$.
Denote~$j_{l+1} = \dots = j_m = n$.
There are~$(m-l)(m-1)!$ permutations~$\pi$ of~$\{1,\dots,m\}$ so that~$j_{\pi(1)} = n$, when no restrictions are set on the remaining indices~$j_{\pi(2)},\dots,j_{\pi(m)}$.
Thus using symmetry of~$p$ we find that
\begin{equation}
\label{eq:bnd-det4}
\begin{split}
(\sigma \nabla p)_{j_1\cdots j_ln \cdots n}
&=
\frac{(m-l)(m-1)!}{m!}
\nabla_np_{j_1\cdots j_ln \cdots n}
+
\frac{(m-1)!}{m!}
\sum_{s=1}^l\nabla_{j_s}p_{j_1\cdots \widehat{j_s} \cdots j_ln \cdots n}
\\
&=
\frac{m-l}{m}\nabla_np_{j_1\cdots j_ln \cdots n}
+
\frac{1}{m}
\sum_{s=1}^l
\nabla_{j_s}p_{j_1\cdots \widehat{j_s} \cdots j_ln \cdots n}.
\end{split}
\end{equation}
Evaluating~\eqref{eq:bnd-det4} on the boundary~$\{x^n = 0\}$ and substituting~\eqref{eq:bnd-det5} and~\eqref{eq:bnd-det6} results in
\begin{equation}
\begin{split}
(\sigma \nabla p)_{j_1\dots j_ln \cdots n}(\hat x,0)
&=
f_{j_1\cdots j_ln \cdots n}(\hat x,0).
\end{split}
\end{equation}
The last step is to prove that
\begin{equation}
(\sigma\nabla p)_{j_1\cdots j_m}(\hat x,0)
=
f_{j_1\cdots j_m}(\hat x,0)
\end{equation}
when~$j_1,\dots,j_m \in \{1,\dots,n-1\}$.
By the definition of the symmetrized covariant derivative
\begin{equation}
(\sigma \nabla p)_{j_1\cdots j_m}
=
\frac{1}{m!}\sum_{\pi}
\nabla_{j_{\pi(1)}}p_{j_{\pi(2)}\cdots j_{\pi(m)}}
\end{equation}
where the summation is over all permutations $\pi$ of~$\{1,\dots,m\}$.
Since~$j_{\pi(k)} < n$ for all~$k \in \{1,\dots,m\}$, we can compute as in~\eqref{eq:bnd-det7} to see that
\begin{equation}
\nabla_{j_{\pi(1)}}p_{j_{\pi(2)}\cdots j_{\pi(m)}}|_{x^n = 0} = 0
\end{equation}
for all permutations~$\pi$ of~$\{1,\dots,m\}$. Thus
\begin{equation}
(\sigma \nabla p)_{j_1\cdots j_m}|_{x^n = 0}
=
0
=
f_{j_1\cdots j_m}|_{x^n = 0}.
\end{equation}
We have finally used item~\ref{item:constr-p-1} of the proof,
where we proved that $f_{j_1 \cdots j_m}(\hat x,0) = 0$ for all $j_1,\dots,j_m \in \{1,\dots,n-1\}$.
This concludes the proof item~\ref{item:constr-p-2} and thus the proof of lemma~\ref{lma:local-bnd-determination} is completed.

\section{A regularity computation}
\label{app:lainalasku}

The following calculation completes the proof of lemma~\ref{lma:regularity-u}.
It is based on the proofs of~\cite[Lemma 4.4]{PSUIDBTTT} and~\cite[Lemma 5.1]{LRSTTCHM}.

Let~$u \in \HS{1}{2}{SM}$ and let~$w_k \in \OS{1}{\infty}{k}$ be so that~$w_k|_{\partial(SM)} = 0$. Then~$\grad{h}u \in \HS{1}{1}{SM}$ and thus
\begin{equation}
\label{eq:proof-of-convergence1}
\iip{\grad{h}u}{\grad{v}w_k}_{L^2(N)}
=
-\iip{\dive{v}\grad{h}u}{w_k}_{L^2(N)}.
\end{equation}
Using propostion~\ref{prop:commutator-formulas} the right side can rewritten as
\begin{equation}
\label{eq:proof-of-convergence2}
-\iip{\dive{v}\grad{h}u}{w_k}
=
-\frac12
\iip{[X,\lap{v}]u}{w_k}
+
\frac{n-1}{2}
\iip{Xu}{w_k}.
\end{equation}
If~$u_k \in \LS{1}{2}{k}$ are the spherical harmonic components of~$u$, then by orthogonality and lemma~\ref{lma:commutators2} we have
\begin{equation}
\label{eq:proof-of-convergence3}
\begin{split}
\iip{[X,\lap{v}]u}{w_k}
&=
\iip{[X_+,\lap{v}]u_{k-1} + [X_-,\lap{v}]u_{k+1}}{w_k}
\\
&=
\iip{
-\frac{2k+n-3}{2}X_+u_{k-1}
+
\frac{2k+n-1}{2}X_-u_{k+1}
}
{
w_k
}.
\end{split}
\end{equation}
Together equations~\eqref{eq:proof-of-convergence1},~\eqref{eq:proof-of-convergence2} and~\eqref{eq:proof-of-convergence3} show that
\begin{equation}
\label{eq:proof-of-convergence4}
\iip{\grad{h}u}{\grad{v}w_k}
=
\iip{(k+n-2)X_+u_{k-1}-kX_-u_{k+1}}{w_k}.
\end{equation}

Then we let~$w \in \C{1}{2}{SM}$ so that~$w|_{\partial(SM)} = 0$.
If we decompose~$w$ into spherical harmonics~$w_k$, then~$w_k \in \OS{1}{\infty}{k}$.
We sum equation~\eqref{eq:proof-of-convergence4} over~$k \in \N$ and use~$k(k+n-2)w_k = \lap{v} w_k$ to get
\begin{equation}
\begin{split}
\iip{\grad{h}u}{\grad{v}w}
&=
\sum_{k=0}^\infty
\iip{(k+n-2)X_+u_{k-1}+kX_-u_{k+1}}{w_k}
\\
&=
\sum_{k=0}^\infty
\iip{\frac{1}{k}X_+u_{k-1}+\frac{1}{k+n-2}X_-u_{k+1}}{\lap{v} w_k}
\\
&=
\iip{
\sum_{k=0}^\infty
\grad{v}
\left[\frac{1}{k}X_+u_{k-1}+\frac{1}{k+n-2}X_-u_{k+1}
\right]
}
{\grad{v} w_k}.
\end{split}
\end{equation}
Thus there is~$W(u) \in \HS{0}{1}{N}$ so that~$\dive{v}(W(u)) = 0$ and
\begin{equation}
\label{eq:proof-of-convergence5}
\grad{h}u
=
\sum_{k=0}^\infty
\grad{v}
\left[
\frac{1}{k}X_+u_{k-1}+\frac{1}{k+n-2}X_-u_{k+1}
\right]
+
W(u).
\end{equation}
It follows from the eigenvalue property that
\begin{equation}
\norm{\grad{v}u_k}^2_{L^2(N)}
=
k(k+n-2)\norm{u_k}^2_{L^2(SM)}.
\end{equation}
Thus equation~\eqref{eq:proof-of-convergence5} yields
\begin{equation}
\label{eq:proof-of-convergence6}
\begin{split}
\norm{\grad{h}u}^2
&=
\sum_{k=0}^\infty
k(k+n-2)
\norm{
\frac{1}{k}X_+u_{k-1}+\frac{1}{k+n-2}X_-u_{k+1}
}^2
+
\norm{W(u)}^2
\\
&=
\sum_{k=0}^\infty
\left(
\frac{k+n-2}{k}
\norm{X_+u_{k-1}}^2
-
2\iip{X_+u_{k-1}}{X_-u_{k+1}}
\right.
\\
&\quad\quad\quad+
\left.
\frac{k}{k+n-2}
\norm{X_-u_{k+1}}^2
\right)
+
\norm{W(u)}^2.
\end{split}
\end{equation}
Again, by orthogonality we have
\begin{equation}
\label{eq:proof-of-convergence7}
\begin{split}
\norm{Xu}^2
&=
\sum_{k=0}^\infty\norm{X_+u_{k-1} + X_-u_{k+1}}^2
\\
&=
\sum_{k=0}^\infty
\left(
\norm{X_+u_{k-1}}^2
+
2\iip{X_+u_{k-1}}{X_-u_{k+1}}
+
\norm{X_-u_{k+1}}^2
\right)
\end{split}
\end{equation}
We sum equations~\ref{eq:proof-of-convergence6} and~\ref{eq:proof-of-convergence7} to get
\begin{equation}
\begin{split}
\norm{u}_{\HS{1}{0}{SM}}^2
&=
\norm{Xu}^2
+
\norm{\grad{h}u}^2
\\
&=
\sum_{k=0}^\infty
\left(1+\frac{k+n-2}{k}\right)
\norm{X_+u_{k-1}}^2
\\
&\quad\quad\quad+
\sum_{k=0}^\infty
\left(1+\frac{k}{k+n-2}\right)
\norm{X_-u_{k+1}}^2
+
\norm{W(u)}^2
\\
&\ge
\sum_{k=0}^\infty
\norm{X_+u_{k-1}}^2
+
\sum_{k=0}^\infty
\norm{X_-u_{k+1}}^2,
\end{split}
\end{equation}
which is estimate~\eqref{eq:xpm-convergence}.

\section{Proof of Liouville's theorem}
\label{sec:liouville}

This appendix is devoted to the proof of lemma~\ref{lma:liouville}. We let~$M$ be a compact smooth manifold with a smooth boundary. Suppose that we are given two~$C^{1,1}$ Riemannian metrics~$g$ and~$h$ on~$M$.
Let the corresponding unit sphere bundles be~$S_gM$ and~$S_hM$.
There is a natural radial~$C^{1,1}$-diffeomorphism~$(x,v) \mapsto (x,v\abs{v}^{-1}_h)$ from~$S_gM$ to~$S_hM$, the inverse map from~$S_hM$ to~$S_gM$ being~$(x,w) \mapsto (x,w\abs{w}^{-1}_g)$.

In the proof of lemma~\ref{lma:liouville} we use three types of Riemannian metrics on~$M$.
We will have a~$C^{1,1}$ Riemannian metric~$g$ and two types of smooth Riemannian metrics~$h$ and~$\alf{g}$.
We denote the corresponding radial diffeomorphisms by
\begin{equation}
\alf{s} \colon S_hM \to \alf{S}M,
\quad
s \colon S_hM \to S_gM,
\quad\text{and}\quad
\alf{r} \colon \alf{S}M \to S_gM.
\end{equation}
In the proof of lemma~\ref{lma:liouville} we will use the convention that the unit sphere bundle related~$\alf{g}$ is denoted~$\alf{S}M \coloneqq S_{\alf{g}}M$, the operators and differential forms related to~$\alf{g}$ are decorated with~$\alpha$ on top or as a subscript, the sphere bundle, operators and differential forms related to~$h$ are decorated with subscripts~$h$ and the bundles and the operators related to the metric~$g$ are written without decorations.

\begin{proof}[Proof of lemma~\ref{lma:liouville}]
The proof is based on smooth approximations of the Riemannian metric~$g$.
Let~$h$ be a smooth fixed reference Riemannian metric on~$M$.
Let~$\left(\alf{g}\right)$ be a sequence of smooth Riemannian metrics on~$M$ so that
\begin{equation}
\label{eq:g-limits}
\alf{g}_{jk}
\to
g_{jk}
\quad
\text{in }
W^{1,\infty}_h(M)
\quad
\text{and}
\quad
\alf{\Gamma}^{i}_{\ jk}
\to
\Gamma^i_{\ jk}
\quad
\text{in }
L^\infty_h(M).
\end{equation}
Existence of such sequence was proved in~\cite[Lemma 18]{IKPIXRTMLR}.
Let~$u \in \Lip(SM)$ and denote~$\alf{u} \coloneqq \alf{r}^\ast u$ and $\tilde u \coloneqq s^\ast u$.
We note that~$\tilde u = \alf{s}^\ast\alf{u}$.
We will prove that
\begin{equation}
\label{eq:limit}
\lim_{\alpha \to \infty}
\int_{\alf{S}M}
\alf{u}
L_{\alf{X}}(\d\alf{\Sigma})
=
\int_{SM}
u
L_{X}(\d\Sigma).
\end{equation}
Establishing equation~\eqref{eq:limit} proves the claim, since by Liouville's theorem~\cite[Lemma 3.6.4.]{PSUGIPETD} we have
\begin{equation}
L_{\alf{X}}(\d\alf{\Sigma}) = 0
\end{equation}
for all~$\alpha \in \N$ and thus the limit integral in equation~\eqref{eq:limit} is zero.

Recall that~$\tilde u = s^\ast u = \alf{s}^\ast\alf{u}$. Thus by basic properties of pullback it is enough prove that
\begin{equation}
\label{eq:limit-on-shm}
\lim_{\alpha \to \infty}
\int_{S_hM}
\tilde u
\alf{s}^{\ast}(L_{\alf{X}}\d\alf{\Sigma})
=
\int_{S_hM}
\tilde u
s^{\ast}(L_{X}\d\Sigma)
\end{equation}
The manifold~$M$ is the Euclidean unit ball in~$\R^n$ and we let~$(x^1,\dots,x^n)$ be usual Cartesian coordinates on~$M$.
We consider coordinates~$(x^1,\dots,x^n,w^1,\dots,w^n)$ on~$S_hM$ and corresponding coordinates
\[
(x^1,\dots,x^n,\alf{v}^1,\dots,\alf{v}^n)
\quad\text{on } \alf{S}M
\quad\text{and}\quad
(x^1,\dots,x^n,v^1,\dots,v^n)
\quad\text{on } SM
\]
so that~$\alf{s}(x,w) = (x,\alf{v})$ and~$s(x,w) = (x,v)$.
We associate to~$(x,w)$ the coordinate vector fields $\partial_{x^1},\dots,\partial_{x^n},\partial_{w^1},\dots,\partial_{w^n}$ and similarly $\partial_{x^1},\dots,\partial_{x^n},\partial_{\alf{v}^1},\dots,\partial_{\alf{v}^n}$ and $\partial_{x^1},\dots,\partial_{x^n},\partial_{v^1},\dots,\partial_{v^n}$ are associated to~$(x,\alf{v})$ and~$(x,v)$.
We let
\begin{equation}
\begin{split}
&\d x^1,\dots,\d x^n,\d w^1,\dots,\d w^n,
\\
&\d x^1,\dots,\d x^n,\d \alf{v}^1,\dots,\d \alf{v}^n,
\quad\text{and}
\\
&\d x^1,\dots,\d x^n,\d v^1,\dots,\d v^n
\end{split}
\end{equation}
be the dual basis one-forms characterized by
\begin{equation}
\begin{split}
&\d x^j(\partial_{x^k}) = \delta^{j}_k,
\quad
\d x^j(\partial_{w^k}) = 0,
\quad
\d w^j(\partial_{x^k}) = 0,
\quad
\d w^j(\partial_{w^k}) = \delta^{j}_k,
\\
&\d x^j(\partial_{x^k}) = \delta^{j}_k,
\quad
\d x^j(\partial_{\alf{v}^k}) = 0,
\quad
\d \alf{v}^j(\partial_{x^k}) = 0,
\quad
\d \alf{v}^j(\partial_{\alf{v}^k}) = \delta^{j}_k,
\\
&\d x^j(\partial_{x^k}) = \delta^{j}_k,
\quad
\d x^j(\partial_{v^k}) = 0,
\quad
\d v^j(\partial_{x^k}) = 0,
\quad
\d v^j(\partial_{v^k}) = \delta^{j}_k.
\end{split}
\end{equation}
Next, we will write the integrals in equation~\eqref{eq:limit-on-shm} in coordinates on~$S_hM$ and we will argue that equation~\eqref{eq:limit-on-shm} follows from~\eqref{eq:g-limits}.
We will derive a local coordinate formula for~$L_X(\d\Sigma)$.
A similar formula for~$L_{\alf{X}}(\d\alf{\Sigma})$ can be derived analogously.
Then we will compute how the coordinate presentations transform under the pullbacks~$s^\ast$ and~$\alf{s}^\ast$.

We denote by~$\abs{g}$ the determinant of~$g$. Since~$\d\Sigma$ is a volume form (differential form of the highest order), Cartan's formula implies that
\begin{equation}
L_X(\d\Sigma)
=
d(i_X\d\Sigma).
\end{equation}
Since
\begin{equation}
i_X\d x^i
=
\d x^i(X)
=
\d x^i(v^j\partial_{x^j} - \Gamma^l_{\ jk}v^jv^k\partial_{v^{l}})
=
v^i
\end{equation}
and
\begin{equation}
i_X\d v^i
=
\d v^i(X)
=
\d v^i(v^j\partial_{x^j} - \Gamma^l_{\ jk}v^jv^k\partial_{v^{l}})
=
-\Gamma^i_{\ jk}v^jv^k
\end{equation}
we see that
\begin{equation}
\label{eq:contraction}
\begin{split}
i_X\d\Sigma
&=
\sum_{i = 1}^nv^i\abs{g}
\,\d x^1
\wedge \dots \wedge
\widehat{\d x^i}
\wedge \dots \wedge
\d x^n
\wedge
\d v^1
\wedge \cdots \wedge
\d v^n
\\
&\quad
+
\sum_{i = 1}^n(-\Gamma^i_{\ jk}v^jv^k\abs{g})
\,\d x^1
\wedge \dots \wedge
\d x^n
\wedge
\d v^1
\wedge \cdots \wedge
\widehat{\d v^i}
\wedge \cdots \wedge
\d v^n,
\end{split}
\end{equation}
where~$\widehat{\d x^i}$ and~$\widehat{\d v^i}$ indicate that one-forms~$\d x^i$ and~$\d v^i$ are omitted from the wedge product.
From~\eqref{eq:contraction} it follows that
\begin{equation}
\label{eq:lxg}
\begin{split}
d(i_X\d\Sigma)
&=
\sum_{i = 1}^n
(-1)^{i-1}
\partial_{x^i}(v^i\abs{g})
\,\d x^1
\wedge \dots \wedge
\d x^n
\wedge
\d v^1
\wedge \cdots \wedge
\d v^n
\\
&\quad
+
\sum_{i = 1}^n
(-1)^{n+i-1}\partial_{v^i}(-\Gamma^i_{\ jk}v^jv^k\abs{g})
\,\d x^1
\wedge \dots \wedge
\d x^n
\wedge
\d v^1
\wedge \cdots \wedge
\d v^n
\\
&=
\sum_{i=1}^n
(-1)^{i-1}
(\partial_{x^i}(v^i\abs{g})
+
(-1)^{n+1}
\partial_{v^i}(\Gamma^i_{\ jk}v^jv^k\abs{g}))
\\
&\hspace{3em}\times
\,\d x^1
\wedge \dots \wedge
\d x^n
\wedge
\d v^1
\wedge \cdots \wedge
\d v^n.
\end{split}
\end{equation}
Similarly, we see that
\begin{equation}
\label{eq:lxalfg}
\begin{split}
L_{\alf{X}}(\d\alf{\Sigma})
&=
\sum_{i = 1}^n
(-1)^{i-1}
\partial_{x^i}(\alf{v}^i\abs{\alf{g}})
\,\d x^1
\wedge \dots \wedge
\d x^n
\wedge
\d \alf{v}^1
\wedge \cdots \wedge
\d \alf{v}^n
\\
&\quad
+
\sum_{i = 1}^n(-1)^{n+i-1}
\partial_{\alf{v}^i}(-\alf{\Gamma}^i_{\ jk}\alf{v}^j\alf{v}^k\abs{\alf{g}})
\,\d x^1
\wedge \dots \wedge
\d x^n
\wedge
\d \alf{v}^1
\wedge \cdots \wedge
\d \alf{v}^n
\\
&=
\sum_{i=1}^n
(-1)^{i-1}
(\partial_{x^i}(\alf{v}^i\abs{\alf{g}})
+
(-1)^{n+1}
\partial_{\alf{v}^i}(\alf{\Gamma}^i_{\ jk}\alf{v}^j\alf{v}^k\abs{\alf{g}}))
\\
&\hspace{3em}\times
\,\d x^1
\wedge \dots \wedge
\d x^n
\wedge
\d \alf{v}^1
\wedge \cdots \wedge
\d \alf{v}^n.
\end{split}
\end{equation}

Next, we pullback formulas~\eqref{eq:lxg} and~\eqref{eq:lxalfg} onto~$S_hM$.
We can compute
\begin{equation}
s^{\ast}\d v^j
=
\d(s^\ast v^j)
=
\d(w^j\abs{w}^{-1}_g)
=
\abs{w}^{-1}_g\d w^j
+
w^j\d(\abs{w}^{-1}_g).
\end{equation}
If we write
\begin{equation}
\d(\abs{w}^{-1}_g)
=
\mu_i\d x^i
+
\lambda_i\d w^i,
\end{equation}
then
\begin{equation}
\mu_k
=
\mu_i\d x^i(\partial_{x^k})
=
\d(\abs{w}^{-1}_g)(\partial_{x^k})
=
\partial_{x^k}\abs{w}^{-1}_g
\quad
\text{and}
\quad
\lambda_k = \partial_{w^k}\abs{w}^{-1}_g.
\end{equation}
Thus
\begin{equation}
s^\ast\d v^j
=
w^j(\partial_{x^k}\abs{w}^{-1}_g)\d x^k
+
(\abs{w}^{-1}_g\delta^j_k
+ 
w^j\partial_{w^k}\abs{w}^{-1}_g)\d w^k.
\end{equation}
Similarly we get
\begin{equation}
s^\ast\d \alf{v}^j
=
w^j(\partial_{x^k}\abs{w}^{-1}_\alpha)\d x^k
+
(\abs{w}^{-1}_\alpha\delta^j_k
+ 
w^j\partial_{w^k}\abs{w}^{-1}_\alpha)\d w^k.
\end{equation}
Since~$s$ and~$\alf{s}$ act identically on the base point~$x$, we have
\begin{equation}
s^\ast(\d x^1 \wedge \cdots \wedge \d x^n)
=
\d x^1 \wedge \cdots \wedge \d x^n
\quad\text{and}\quad
\alf{s}^\ast(\d x^1 \wedge \cdots \wedge\d x^n)
=
\d x^1 \wedge \cdots \wedge \d x^n.
\end{equation}
Using the fact that a wedge product vanishes whenever repetition appears we get
\begin{equation}
\begin{split}
&
s^\ast(
\d x^1
\wedge\dots\wedge
\d x^n
\wedge
\d v^1
\wedge\dots\wedge
\d v^n)
\\
&\quad=
\d x^1
\wedge\dots\wedge
\d x^n
\wedge
(\abs{w}^{-1}_g\delta^j_k + w^1(\partial_{w^k}\abs{w}^{-1}_g))\d w^k
\wedge
\cdots
\\&\qquad
\cdots
\wedge
(\abs{w}^{-1}_g\delta^n_k + w^n(\partial_{w^k}\abs{w}^{-1}_g))\d w^k
\\
&\quad=
\d x^1
\wedge\dots\wedge
\d x^n
\wedge
\bigwedge_{j=1}^n
(\abs{w}^{-1}_g\delta^j_k + w^j(\partial_{w^k}\abs{w}^{-1}_g))\d w^k.
\end{split}
\end{equation}
By a similar computation
\begin{equation}
\begin{split}
&
s^\ast(
\d x^1
\wedge\dots\wedge
\d x^n
\wedge
\d v^1
\wedge\dots\wedge
\d v^n)
\\&\quad
=
\d x^1
\wedge\dots\wedge
\d x^n
\wedge
\bigwedge_{j=1}^n
(\abs{w}^{-1}_g\delta^j_k + w^j(\partial_{w^k}\abs{w}^{-1}_\alpha))\d w^k.
\end{split}
\end{equation}
To complete formulas for the pullback of~\eqref{eq:lxg} and~\eqref{eq:lxalfg} we use the facts that~$s^\ast = s^{-1}_\ast$ and~$\alf{s}^\ast = \alf{s}^{-1}_\ast$ to compute
\begin{equation}
s^\ast \partial_{x^i}
=
\partial_{x^i}
+
(\partial_{x^i}w^j)\partial_{w^j}
\quad\text{and}\quad
\alf{s}^\ast \partial_{x^i}
=
\partial_{x^i}
+
(\partial_{x^i}w^j)\partial_{w^j}
\end{equation}
as well as
\begin{equation}
s^\ast\partial_{v^i}
=
(\partial_{v^j}w^j)\partial_{w^j}
\quad\text{and}\quad
\alf{s}^\ast\partial_{\alf{v}^i}
=
(\partial_{\alf{v}^j}w^j)\partial_{w^j}.
\end{equation}
Thus we get
\begin{align}
s^\ast(\partial_{x^i}v^i\abs{g})
&=
\partial_{x^i}(w^i\abs{w}^{-1}_g\abs{g})
+
(\partial_{x^i}w^j)(\partial_{w^j}(w^i\abs{w}^{-1}_g\abs{g})),
\\
\alf{s}^\ast(\partial_{x^i}v^i\abs{\alf{g}})
&=
\partial_{x^i}(w^i\abs{w}^{-1}_\alpha\abs{\alf{g}})
+
(\partial_{x^i}w^j)(\partial_{w^j}(w^i\abs{w}^{-1}_\alpha\abs{\alf{g}})),
\end{align}
and
\begin{align}
s^\ast\partial_{v^i}(\Gamma^i_{\ jk}v^jv^k\abs{g})
&=
\Gamma^i_{\ jk}\abs{g}
(\partial_{v^i}w^l)
\partial_{w^l}(w^i\abs{w}^{-1}_gw^k\abs{w}^{-1}_g),
\\
\alf{s}^\ast\partial_{\alf{v}^i}
(\alf{\Gamma}^i_{\ jk}\alf{v}^j\alf{v}^k\abs{\alf{g}})
&=
\alf{\Gamma}^i_{\ jk}\abs{\alf{g}}
(\partial_{\alf{v}^i}w^l)
\partial_{w^l}(w^i\abs{w}^{-1}_\alpha w^k\abs{w}^{-1}_\alpha).
\end{align}
The formulas we get for the pullbacks of~$L_X(\d\Sigma)$ along~$s$ and of~$L_{\alf{X}}(\d\alf{\Sigma})$ along~$\alf{s}$ are
\begin{equation}
\label{eq:pullback-g}
\begin{split}
s^\ast
L_X(\d\Sigma)
&=
\sum_{i=1}^n
(-1)^{i-1}
\bigg(
\partial_{x^i}(w^i\abs{w}^{-1}_g\abs{g})
+
(\partial_{x^k}w^j)
(\partial_{w^j}(w^k\abs{w}^{-1}_g\abs{g}))
\\
&\qquad\qquad+
(-1)^{n+1}
\Gamma^i_{\ jk}\abs{g}(\partial_{v^m}w^l)
\partial_{w^l}(w^m\abs{w}^{-1}_gw^k\abs{w}^{-1}_g)
\bigg)
\\
&\qquad\qquad
\d x^1 \wedge \cdots \wedge \d x^n \wedge
\bigwedge_{j=1}^n
(
\abs{w}^{-1}_g\delta^j_k
+
w^j(\partial_{w^k}\abs{w}^{-1}_g)
)
\d w^k
\end{split}
\end{equation}
and
\begin{equation}
\label{eq:pullback-alfg}
\begin{split}
\alf{s}^\ast
L_{\alf{X}}(\d\alf{\Sigma})
&=
\sum_{i=1}^n
(-1)^{i-1}
\bigg(
\partial_{x^i}(w^i\abs{w}^{-1}_\alpha\abs{\alf{g}})
+
(\partial_{x^k}w^j)
(\partial_{w^j}(w^k\abs{w}^{-1}_\alpha\abs{\alf{g}}))
\\
&\qquad\qquad+
(-1)^{n+1}
\alf{\Gamma}^i_{\ jk}\abs{\alf{g}}(\partial_{\alf{v}^m}w^l)
\partial_{w^l}(w^m\abs{w}^{-1}_\alpha w^k\abs{w}^{-1}_\alpha)
\bigg)
\\
&\qquad\qquad
\d x^1 \wedge \cdots \wedge \d x^n \wedge
\bigwedge_{j=1}^n
(
\abs{w}^{-1}_\alpha\delta^j_k
+
w^j(\partial_{w^k}\abs{w}^{-1}_\alpha)
)
\d w^k.
\end{split}
\end{equation}
From formulas~\eqref{eq:pullback-g} and~\eqref{eq:pullback-alfg} we see that can conclude the equation~\eqref{eq:limit-on-shm} if the following holds:
\begin{equation}
\label{eq:final-limit1}
\begin{split}
\partial_{x^i}(w^i\abs{w}^{-1}_\alpha\abs{\alf{g}})
&\prod_{j \in S}(\abs{w}^{-1}_\alpha \delta^j_k)
\prod_{j \in S'}(w^j(\partial_{w^k}\abs{w}^{-1}_\alpha))
\\
&\to
\partial_{x^i}(w^i\abs{w}^{-1}_\alpha\abs{g})
\prod_{j \in S}(\abs{w}^{-1}_g \delta^j_k)
\prod_{j \in S'}(w^j(\partial_{w^k}\abs{w}^{-1}_g)),
\end{split}
\end{equation}
\begin{equation}
\label{eq:final-limit2}
\begin{split}
(\partial_{x^i}w^j)(\partial_{w^j}(w^k&\abs{w}^{-1}_\alpha\abs{\alf{g}}))
\prod_{j \in S}(\abs{w}^{-1}_\alpha \delta^j_k)
\prod_{j \in S'}(w^j(\partial_{w^k}\abs{w}^{-1}_\alpha))
\\
&\to
(\partial_{x^i}w^j)(\partial_{w^j}(w^k\abs{w}^{-1}_g\abs{g}))
\prod_{j \in S}(\abs{w}^{-1}_g \delta^j_k)
\prod_{j \in S'}(w^j(\partial_{w^k}\abs{w}^{-1}_g)),
\end{split}
\end{equation}
\begin{equation}
\label{eq:final-limit3}
\begin{split}
&\alf{\Gamma}^i_{\ jk}\abs{\alf{g}}(\partial_{\alf{v}^m}w^l)
(\partial_{w^l}(w^m\abs{w}^{-1}_\alpha w^l\abs{w}^{-1}_\alpha))
\prod_{j \in S}(\abs{w}^{-1}_\alpha \delta^j_k)
\prod_{j \in S'}(w^j(\partial_{w^k}\abs{w}^{-1}_\alpha))
\\
&\to
\Gamma^i_{\ jk}\abs{g}(\partial_{v^m}w^l)
(\partial_{w^l}(w^m\abs{w}^{-1}_g w^l\abs{w}^{-1}_g))
\prod_{j \in S}(\abs{w}^{-1}_g \delta^j_k)
\prod_{j \in S'}(w^j(\partial_{w^k}\abs{w}^{-1}_g))
\end{split}
\end{equation}
in~$L^1(S_hM)$, where~$S$ and~$S'$ are any subsets of~$\{1,\dots,n\}$.
We chose the approximating sequence~$\left(\alf{g}\right)$ so that
\begin{equation}
\label{eq:limits-g-recall}
\alf{g}_{jk}
\to
g_{jk}
\quad
\text{in }
W^{1,\infty}_h(M)
\quad
\text{and}
\quad
\alf{\Gamma}^{i}_{\ jk}
\to
\Gamma^i_{\ jk}
\quad
\text{in }
L^\infty_h(M).
\end{equation}
From~\eqref{eq:limits-g-recall} we see that
\begin{equation}
\begin{split}
&\partial_{x^i}(w^i\abs{w}^{-1}_\alpha\abs{\alf{g}})
\to
\partial_{x^i}(w^i\abs{w}^{-1}_g\abs{g}),
\\
&\abs{w}^{-1}_\alpha\delta^{j}_k
\to
\abs{w}^{-1}_g\delta^{j}_k,
\\
&w^j(\partial_{w^k}\abs{w}^{-1}_\alpha))
\to
w^j(\partial_{w^k}\abs{w}^{-1}_g)),
\\
&\partial_{w^j}(w^k\abs{w}^{-1}_\alpha\abs{\alf{g}}
\to
\partial_{w^j}(w^k\abs{w}^{-1}_g\abs{g},
\\
&\alf{\Gamma}^i_{\ jk}\abs{\alf{g}}
\to
\Gamma^i_{\ jk}\abs{g},
\\
&\partial_{\alf{v}^m}w^l
\to
\partial_{v^m}w^l,
\\
&\partial_{w^l}(w^m\abs{w}_\alpha^{-1}w^l\abs{w}^{-1}_\alpha)
\to
\partial_{w^l}(w^m\abs{w}_g^{-1}w^l\abs{w}^{-1}_g)
\end{split}
\end{equation}
in~$L^\infty(S_hM)$.
Thus we can take products and we conclude that~\eqref{eq:final-limit1},~\eqref{eq:final-limit2} and~\eqref{eq:final-limit3} hold, which finishes the proof.
\end{proof}

\bibliographystyle{alpha}
\bibliography{references}

\end{document}